\renewcommand{\emptyset}{\varnothing}
\newcommand{\lrtimes}{\super{\ltimes}{\rtimes}}
\crefname{thm}{theorem}{theorems}
\crefname{prop}{proposition}{propositions}
\crefname{cor}{corollary}{Corollary}
\crefname{defn}{definition}{Definition}
\crefname{defnb}{definition}{Definition}
\newcommand{\set}[1]{\lb #1\rb}
\title{\textsf{Non-orientable quasi-trees\\
    for the \BRp{}}}
\author{\textrm{Fabien Vignes-Tourneret}}
\date{}
\begin{document}
\nobibliography*
\maketitle
%\vspace*{-3cm}

\begin{abstract}
 We extend the quasi-tree expansion of % [\bibentry{Champanerkar2007aa}]
 A.~Champanerkar, I.~Kofman, and N.~Stoltzfus to not necessarily orientable ribbon graphs. We study the duality properties of the \BRp{} in terms of this expansion. As a corollary, we get a ``connected state'' expansion of the Kauffman bracket of virtual link diagrams. Our proofs use extensively the partial duality of S.~Chmutov.% [\bibentry{Chmutov2007aa}].
\end{abstract}

\noindent
\textbf{Keywords}: ribbon graph, quasi-tree, partial duality, \BRp,\\\Kb.

\section{Introduction}
\label{sec:introduction}

Ribbon graphs are a topological generalization of graphs. They can be
described in (at least) three different ways: as embedded graphs, as
possibly non-orientable surfaces with boundary or as triples of
permutations describing the vertices, the edges and their possible twists (see \cref{RibbonEx1}). In the following, we will mainly adopt the surface point of view.

In $1954$, W.~Tutte defined a graph invariant \citep{Tutte1954aa}, now named Tutte polynomial, which is a generalization of many other invariants such as the chromatic and flow polynomials. The \Tp{} may be described either via a spanning subgraph expansion, a spanning tree expansion, or, recursively, by reduction relations. More recently, B.~Bollob\'as and O.~Riordan defined a ribbon graph invariant which generalizes the Tutte polynomial. The \BRp{} also has three different possible definitions. The present article focuses on one of them.

It turns out that, for ribbon graphs, the right topological
generalization of a spanning tree is a quasi-tree. A quasi-tree is a spanning subribbon graph with only one boundary component (or face). A.~Champanerkar, I.~Kofman, and N.~Stoltzfus proved that the \BRp{} has a quasi-tree expansion \citep{Champanerkar2007aa}. Their work was restricted to orientable ribbon graphs. Our article aims at extending their expansion to the non-orientable case.\\
% \begin{figure}[!htp]
%   \centering
%   \includegraphics[scale=.6]{QTree-fig-15}
%   \caption{A quasi-tree}
%   \label{fig:QTEx}
% \end{figure}

Very recently, S.~Chmutov defined a generalization of the usual
Euler-Poincaré (hereafter natural) duality for ribbon graphs
\citep{Chmutov2007aa}. His partial duality consists in forming the
natural dual but only with respect to a spanning sub(ribbon)graph. We
find that this new duality is an interesting, fruitful and promising
framework for the study of ribbon graphs and their invariants. In our
opinion, the use of the partial duality simplifies the formulation of
the proofs presented in this article a lot.\\

The paper is organized as follows. In \cref{sec:part-dual-ribb}, we
recall the basic definitions of a ribbon graph and the partial
duality. The spanning tree expansion of the \Tp{} relies on a notion
of activity of an edge with respect to a spanning
tree. \Cref{sec:activities-wrt-quasi} defines the generalization of
Tutte's activities to adapt them to non-plane ribbon graphs and
quasi-trees. The spanning tree expansion of the \Tp{} consists in a
factorization of the monomials of the spanning subgraph expansion. To
this end, the subgraphs are grouped into packets, each of which is
labelled by a spanning tree. In \cref{sec:binary-tree-partial}, we group the subribbon graph of a ribbon graph into packets, naturally associated with quasi-trees. \Cref{sec:non-orientable-quasi} is devoted to the statement and proof of our main theorem, namely a quasi-tree expansion of the \BRp{} of not necessarily orientable ribbon graphs. We also give the corresponding expansion for the multivariate version of this polynomial \citep{Moffatt2008ab,Vignestourneret2008aa}. In \cref{sec:duality-properties}, we recover the duality property of the \BRp{}, namely its invariance under partial duality at $q\defi xyz^{2}=1$ \citep{Chmutov2007aa,Vignestourneret2008aa}, but in terms of its quasi-tree expansion. This allows us to get an alternative expression for this polynomial at $q=1$.

The \Kb{} of a virtual link diagram and the \BRp{} of ribbon graphs have been proven to be related to each other \citep{Chmutov2007ab,Dasbach2008aa,Chmutov2008aa,Chmutov2007aa}. As a consequence, the quasi-tree expansion of the \BRp{} allows to get such an expansion for the \Kb. In \cref{sec:kauffm-brack-virt}, we translate this expansion into pure ``knot theoretical'' terms to get a connected state (ie a one-component state) expansion of the \Kb{} of a virtual link diagram. Finally, an appendix exemplifies the quasi-tree (resp.\@ connected state) expansion of the \BRp{} (resp.\@ \Kb).
\begin{note}
  During the publishing process, the author discovered that a
  quasi-tree expansion for the (unsigned) \BRp{} of non-orientable ribbon graphs
  has been derived by Ed~Dewey \citep{Dewey2007aa}. His expansion
  is true for any $w$ but does not make use of Chmutov's partial duality.
\end{note}

\section{Partial duality of a ribbon graph}
\label{sec:part-dual-ribb}

\subsection{Ribbon graphs}
\label{sec:ribbon-graphs}

A ribbon graph $G$ is a (not necessarily orientable) surface with boundary represented as the union of two sets of closed topological discs called vertices $V(G)$ and edges $E(G)$. These sets satisfy the following:
\begin{itemize}
  \item vertices and edges intersect by disjoint line segment,
  \item each such line segment lies on the boundary of precisely one vertex and one edge,
  \item every edge contains exactly two such line segments.
\end{itemize}
\Cref{RibbonEx1} shows an example of a ribbon graph. Note that we allow the edges to twist (giving the possibility for the surfaces associated with the ribbon graphs to be non-orientable). A priori an edge may twist more than once but the \BRp{} only depends on the parity of the number of twists (this is indeed the relevant information for counting the boundary components of a ribbon graph), so that we will only consider edges with at most one twist.
\begin{figure}[!htp]
  \centering
  \subfloat[A signed ribbon graph]{{\label{RibbonEx1}}\includegraphics[scale=.5]{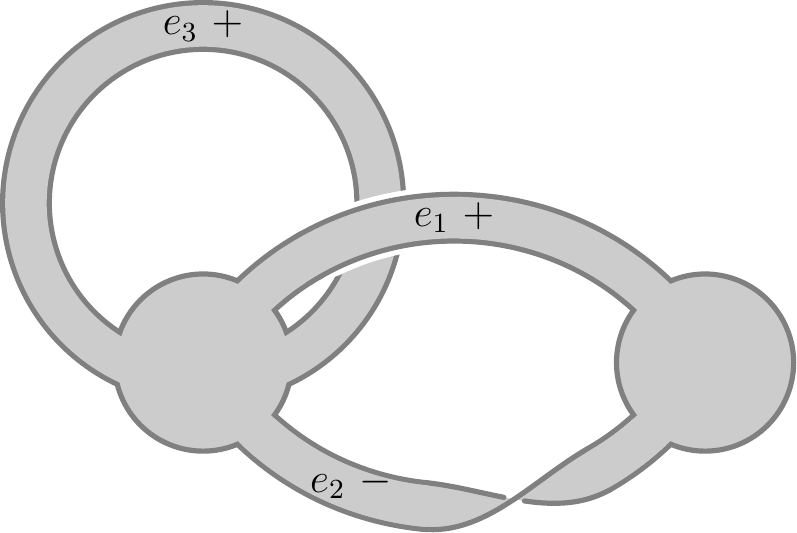}}\hspace{2cm}
  \subfloat[The combinatorial representation]{{\label{CombRep1}}\includegraphics[scale=.5]{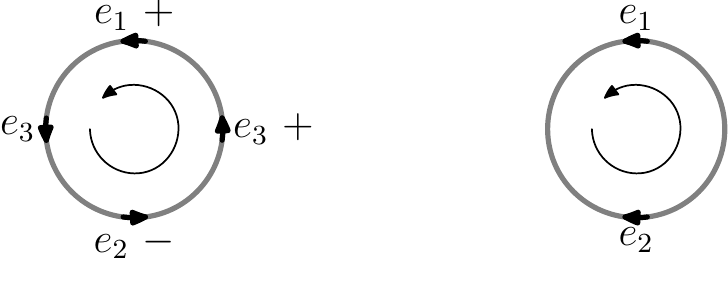}}
  \label{TwoRepEx}
  \caption{Two representations of a ribbon graph}
\end{figure}

\begin{defn}[Notations]\label{def:notations}
  Let $G$ be a ribbon graph. In the rest of this article, we will use
  the following notation:
\begin{itemize}
\item $v(G)=\card V(G)$ is the number of vertices of $G$,
\item $e(G)=\card E(G)$ is the number of edges of $G$,
\item $k(G)$ is its number of components,
\item $r(G)=v(G)-k(G)$ is its rank,
\item $n(G)=e(G)-r(G)$ is its nullity,
\item $f(G)$ is its number of boundary components (faces),
\item for all $E'\subseteq E(G)$, $F_{E'}$ is the spanning sub(ribbon)
  graph of $G$ the edge-set of which is $E'$ and
\item for all $E'\subseteq E(G)$, $E'^{c}\defi E(G)\setminus E'$.
\end{itemize}
\end{defn}
% A ribbon graph $G$ is said to be \textbf{signed} if an element of $\{+,-\}$ is assigned to each edge. This is achieved via a function $\veps_{G}:E(G)\to\{-1,1\}$.

For the construction of partial dual graphs, another (equivalent)
representation of ribbon graphs will be useful. It has been introduced
in \citep{Chmutov2007aa} and will be referred to hereafter as the
``combinatorial representation''. It can be described as follows: for
any ribbon graph $G$, pick out an orientation of each vertex-disc and
each edge-disc. The orientation of the edges induces an orientation of
the line segments along which they intersect the vertices. Then draw
all vertex-discs as disjoint circles in the plane oriented
counterclockwise (say) but for the edges, draw only the arrows
corresponding to the orientation of the line segments. \Cref{CombRep1}
gives the combinatorial representation of the graph of \cref{RibbonEx1}. Each edge $e\in E(G)$ is represented as a pair of
arrows which share the same label $e$.

Given a combinatorial representation, one reconstructs the corresponding ribbon graph as follows. Each circle of the representation is filled: this gives the vertex-discs. Let us consider a couple $c_{e}$ of arrows with the same label (i.e.\@ corresponding to the same edge). These two arrows belong to the boundaries of vertices $v_{1}$ and $v_{2}$, which may be equal. One draws an edge which intersects $v_{1}$ and $v_{2}$ along the arrows of $c_{e}$. We now have to decide whether this edge twists or not. This depends on the relative direction of the two arrows. Actually there is a unique choice (twist or not) such that there exists an orientation of the edge which reproduces the couple of arrows under consideration. So we proceed as explained for each couple of arrows with a common label.

\paragraph{Loops}
\label{sec:loops}

Unlike the graphs, the ribbon graphs may contain four different kinds
of loops. A loop may be \textbf{orientable} or not, a
\textbf{non-orientable} loop being a twisting edge. Let us consider
the general situations of \cref{fig:loopRibbon}. The boxes $A$ and $B$
represent any ribbon graph such that the picture \labelcref{OrLoop} (resp.\@
\labelcref{NonOrLoop}) describes any ribbon graph $G$ with an orientable
(resp.\@ non-orientable) loop $e$ at vertex $v$. A loop is said to be \textbf{nontrivial} if there is a path in $G$ from $A$ to $B$ which does not contain $v$. If not the loop is called \textbf{trivial} \citep{Bollobas2002aa}.
\begin{figure}[!htp]
  \centering
  \subfloat[An orientable loop]{{\label{OrLoop}}\includegraphics[scale=.5]{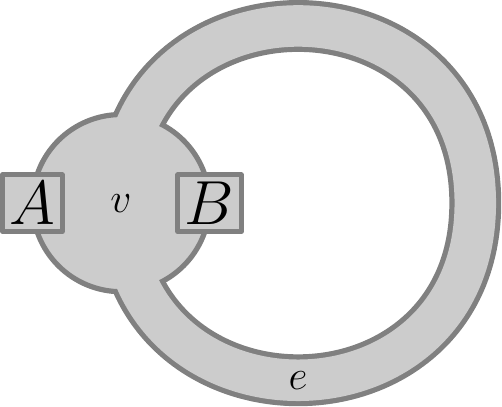}}\hspace{2cm}
  \subfloat[A non-orientable loop]{{\label{NonOrLoop}}\includegraphics[scale=.5]{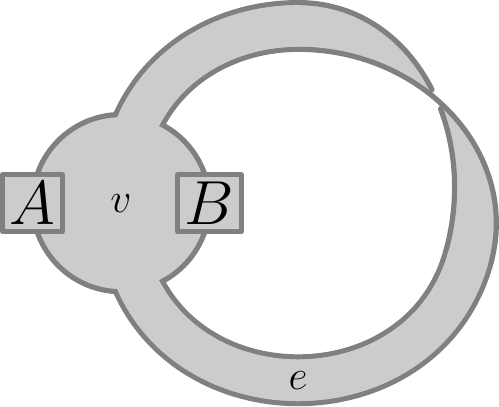}}
  \caption{Loops in ribbon graphs}
  \label{fig:loopRibbon}
\end{figure}

A ribbon graph $G$ is said to be \textbf{signed} if an element of $\{+,-\}$ is assigned to each edge. This is achieved via a function $\veps_{G}:E(G)\to\{-1,1\}$.

\subsection{Partial duality}
\label{sec:partial-duality}

S.~Chmutov introduced a new ``generalized duality'' for ribbon graphs
which generalizes the usual notion of duality (see \citep{Chmutov2007aa}). In \citep{Moffatt2008aa}, I.~Moffatt renamed this new duality as ``partial duality''. We adopt this designation here. We now describe the construction of a partial dual graph and give a few properties of the partial duality.\\

Let $G$ be a ribbon graph and $E'\subseteq E(G)$. Let $F_{E'}$ be the
spanning subribbon graph of $G$ whose edge-set is $E'$. We will
construct the dual $G^{E'}$ of $G$ with respect to the edge-set $E'$;
see \cref{fig:ribbonEx} for an example. The general idea is the
following. We consider the spanning subribbon graph $F_{E'}$ and mark it with
arrows to keep track of the edges in $E(G)\setminus E'$. Then we
take the natural dual $F_{E'}^{\star}$ of the arrow-marked ribbon
graph $F_{E'}$. Finally we use the arrows on $F_{E'}^{\star}$ to
redraw the edges in $E(G)\setminus E'$ \citep{Moffatt2009aa}.\\

We now describe the partial duality more precisely. Recall that each edge of $G$ intersects one or two vertex-discs along two line segments. In the following, each time we write ``line segment'', we mean the intersection of an edge and a vertex.

We actually construct the combinatorial representation of the partial
dual $G^{E'}$ of $G$. We first choose an orientation for each edge of
$G$. It induces an orientation of the boundaries of the edges. For
each edge in $E(G)\setminus E'$, and as was explained for the
combinatorial representation, we draw one arrow per oriented line
segment at the boundary of that edge and in the direction of the orientation. For the edges in $E'$, we proceed differently. Considering them as rectangles, they have two opposite sides that they share with one or two disc-vertices: these are the line segments defined above. But they also have two other opposite sides that we call ``long sides''. The chosen orientation induces an orientation of the long sides of the edges in $E'$; see \cref{fig:BoundComp} for an example. We draw an arrow on each long side of each edge in $E'$ according to the chosen orientation. Now draw each boundary component of $F_{E'}$ as a circle with arrows corresponding to the edges of $G$. The result is the combinatorial representation of $G^{E'}$; see \cref{fig:DualExComb,fig:DualEx}. Note that $G$ and $G^{E'}$ are generally embedded into different surfaces (they may have different genera).\\

As in the case of the natural duality, and for any $E'\subseteq E(G)$, there is a bijection between the edges of $G$ and the edges of its partial dual $G^{E'}$. Let $\phi:E(G)\to E(G^{E'})$ denote this bijection. We explain now how it is defined from the construction of the partial dual graph. As explained above, on each edge $e\in E(G)$, we draw two arrows compatible with an arbitrarily chosen orientation of this edge. If $e\in E'$, these arrows are drawn on the long sides of $e$. If $e\in E(G)\setminus E'$, they belong to the line segments along which $e$ intersects its end-vertices. Anyway we label this couple of arrows with $\phi(e)$. Proceeding like that for all edges of $G$, we build the combinatorial representation of the dual $G^{E'}$ namely we get one circle per boundary component of the spanning subribbon graph $F_{E'}$ of $G$. On each of these circles, there are arrows which represent the edges of $G^{E'}$. For each couple $c_{e'}$ of arrows that is for each edge $e'$ of $G^{E'}$, there exists a unique $e\in E(G)$ such that  $c_{e'}$ bears the label $\phi(e)$. The map $\phi$ is then clearly a bijection.\\

For signed graphs, the partial duality comes with a change of the sign
function. The function $\veps_{G^{E'}}$ is defined by the following
equations: for all $e\in E\setminus E',\,\veps_{G^{E'}}(e)=\veps_{G}(e)$ and
for all $e\in E',\,\veps_{G^{E'}}(e)=-\veps_{G}(e)$.\label{page:SignPartialDual}
\begin{figure}[!htp]
  \centering
  \subfloat[A ribbon graph $G$ with $E'=\{e_{1}\}$]{{\label{fig:ribbonEx}}\includegraphics[scale=.5]{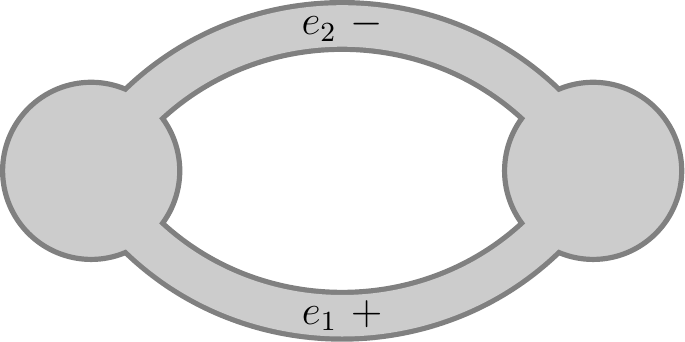}}\hspace{2cm}
  \subfloat[The combinatorial representation of $G$]{{\label{fig:ribbonExReprArrow}}\includegraphics[scale=.5]{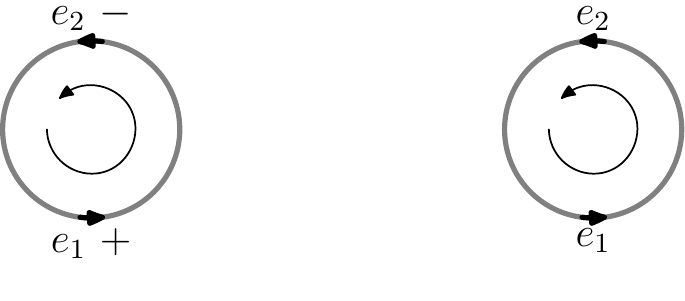}}\\
  \subfloat[The boundary component of $F_{E'}$]{{\label{fig:BoundComp}}\includegraphics[scale=.5]{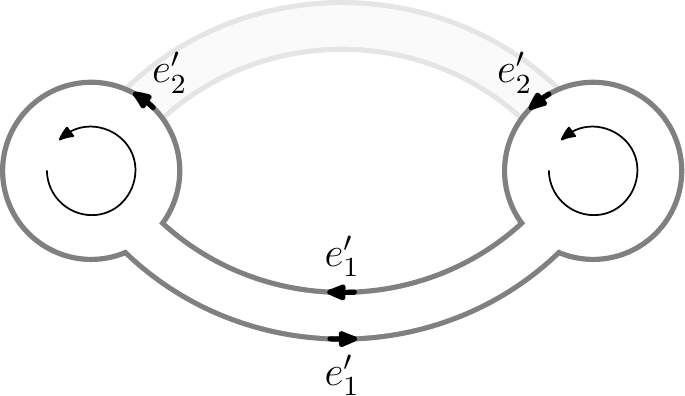}}\hspace{2cm}
  \subfloat[The combinatorial representation of $G^{E'}$]{{\label{fig:DualExComb}}\includegraphics[scale=.5]{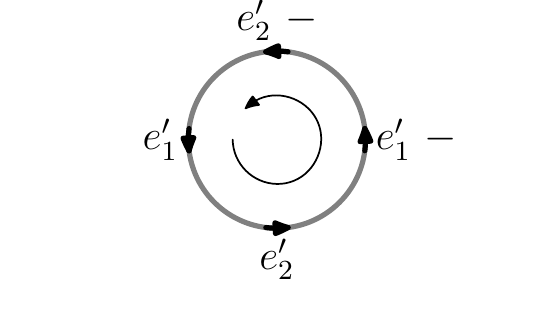}}\\
  \subfloat[The dual $G^{E'}$]{{\label{fig:DualEx}}\includegraphics[scale=.5]{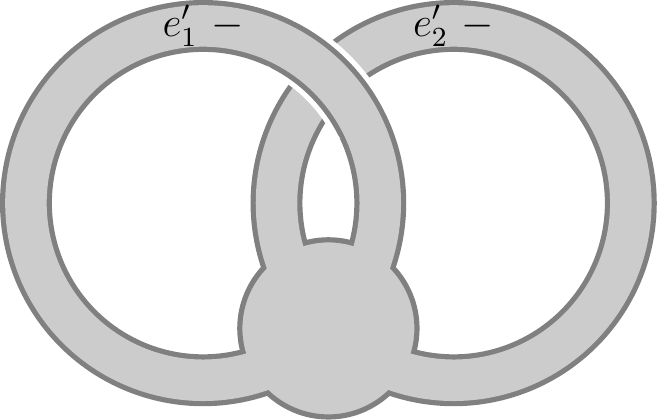}}
  \caption{Construction of a partial dual}
  \label{PartDualEx}
\end{figure}\\
%\newpage

S.~Chmutov proved among other things the following basic properties of the partial duality:
\begin{lemma}[\citep{Chmutov2007aa}]
  \label{lem:SimpleProp}
  For any ribbon graph $G$ and any subset of edges $E'\subseteq E(G)$, we have
  \begin{itemize}
  \item $(G^{E'})^{E'}=G$,
  \item $G^{E(G)}=G^{\star}$ and
  \item let $e\notin E'$; then $G^{E'\cup\{e\}}=(G^{E'})^{\{e\}}$.% and
  %\item the partial duality preserves the number of connected components.
  \end{itemize}
\end{lemma}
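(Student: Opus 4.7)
The plan is to prove all three statements by working in the combinatorial representation and carefully tracking where the arrows sit throughout the construction. The second identity $G^{E(G)}=G^{\star}$ is almost immediate from the definitions: when $E'=E(G)$ we have $F_{E'}=G$, so the boundary components of $F_{E'}$ are the faces of $G$, and every edge carries its arrows on its long sides. This is precisely the data that describes the natural dual $G^{\star}$, each face of $G$ becoming a vertex-disc of $G^{\star}$ and each edge reappearing with its long-side arrows landing on the two circles corresponding to the faces it separated.

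For the involution $(G^{E'})^{E'}=G$, the key geometric observation is the following. In the construction of $G^{E'}$, for each $e\in E'$ the pair of arrows labelled $\phi(e)$ is drawn on the two long sides of $e$; once the boundary components of $F_{E'}$ are redrawn as the vertex-circles of $G^{E'}$, these arrows become the line segments of $\phi(e)$ in $G^{E'}$. Conversely, the long sides of $\phi(e)$ in $G^{E'}$ run along pieces of the old vertex boundaries which are precisely the line segments of $e$ in $G$. For $e\notin E'$ the arrows never move. Applying the construction a second time to $G^{E'}$ with respect to $\phi(E')$ therefore swaps the roles of line segments and long sides back and reproduces the combinatorial representation of $G$. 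For signed graphs the identity $-(-\varepsilon)=\varepsilon$ restores the original signs on $E'$, while the signs on $E(G)\setminus E'$ are untouched.

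For the associativity $G^{E'\cup\{e\}}=(G^{E'})^{\{e\}}$ with $e\notin E'$, I would compute the boundary components of $F_{E'\cup\{e\}}$ in two stages: first take the boundary components of $F_{E'}$, which are by construction the vertex-circles of $G^{E'}$, and then attach the edge $\phi(e)$ along its line segments and take the boundary of the resulting surface; this agrees with the spanning subribbon graph $F_{\{\phi(e)\}}$ of $G^{E'}$. An arrow chase then shows that both sides produce the same combinatorial representation, and the sign rules compose correctly since $E'$ and $\{e\}$ are disjoint. The main obstacle throughout is the bookkeeping for twisting edges: the orientations induced on the two long sides of a non-orientable edge are not coherent in the usual planar sense, so one must verify that the reconstruction rule ``twist iff the two arrows point in incompatible directions'' is preserved under each of the iterations. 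This reduces to a finite local check per edge type (orientable versus non-orientable, bridge versus loop), after which the third identity propagates the result to arbitrary $E'$ by induction on $|E'|$.
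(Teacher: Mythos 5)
The paper offers no proof of this lemma to compare against: it is imported verbatim from Chmutov's article, with only the citation. So your sketch has to be judged on its own, and in outline it reconstructs the standard (Chmutov-style) argument correctly. The case $E'=E(G)$ is indeed immediate from the definition; the involution rests, as you say, on the fact that $\phi(e)$ can be taken to be the very same rectangle as $e$ with the roles of its two attaching sides (line segments) and its two long sides exchanged; and the third identity follows because attaching a band to a surface and reading off the new boundary circles depends only on the old boundary circles and the two marked arcs, not on the interior of the surface they bound. These are the right ideas.

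Two points keep this at the level of a plan rather than a proof. First, for $(G^{E'})^{E'}=G$ the per-edge ``swap of arrows'' is not by itself the closing step: what one actually needs is the global statement that $F_{\phi(E')}\subseteq G^{E'}$ --- the vertex discs of $G^{E'}$, which cap off the boundary circles of $F_{E'}$, together with the bands $\phi(e)$, $e\in E'$ --- is the complement of the original vertex discs inside the capped surface, so that its boundary components are exactly the boundaries of the vertices of $G$, carrying the untouched arrows of $E(G)\setminus E'$ and the long-side arrows of $E'$ in the correct cyclic order and with the correct twist data. You assert this (``reproduces the combinatorial representation of $G$''), but the cyclic-order and twist bookkeeping around an entire vertex, not merely along one edge, is where the whole content of the lemma sits; a ``finite local check per edge type'' understates it, since the positions of all other edges' arrows along the relevant boundary circles must be seen to be preserved. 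Second, your closing sentence blurs the logical order: using the third identity to propagate results to arbitrary $E'$ by induction on $|E'|$ is legitimate only because your two-stage boundary computation proves that identity for arbitrary $E'$ independently; as written it reads as if the third item both feeds and is fed by the induction. Either prove all three statements directly for general $E'$ (as your second and third paragraphs begin to do), or reorganize cleanly as: single-edge statements first, then the third identity, then the general involution by induction.
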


The partial duality allows an interesting and fruitful definition of the contraction of an edge:
\begin{defnb}[Contraction of an edge \citep{Bollobas2002aa}]\label{def:Contraction}
  Let $G$ be a ribbon graph and $e\in E(G)$ any of its edges. We define the contraction of $e$ by
  \begin{align}
    G/e\defi&G^{\{e\}}-e.\label{eq:ContractionDef}
  \end{align}
\end{defnb}
From the definition of the partial duality, one easily checks that,
for an edge incident with two different vertices, the \cref{def:Contraction} coincides with the usual intuitive contraction
of an edge. The contraction of a loop depends on its orientability;
see \cref{fig:OrLoopContraction,fig:NonOrLoopContraction}.

Different definitions of the contraction of a loop have been used in
the literature. One can define $G/e\defi G-e$. In
\citep{Huggett2007aa}, S.~Huggett and I.~Moffatt give a definition
which leads to surfaces which are no longer ribbon graphs. The \cref{def:Contraction} maintains the duality between
contraction and deletion.
\vspace{1cm}
\begin{figure}[!htp]
  \centering
  \begin{minipage}[c]{.4\linewidth}
    \centering
    \includegraphics[scale=.5]{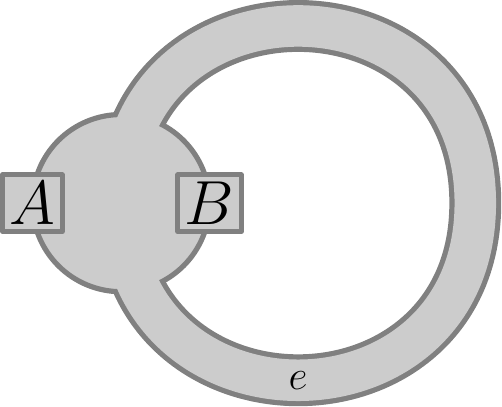}\\
    A ribbon graph $G$ with an orientable loop $e$
  \end{minipage}%
  \qquad$\longrightarrow$\qquad%
  \begin{minipage}[c]{.4\linewidth}
    \centering
    \includegraphics[scale=.5]{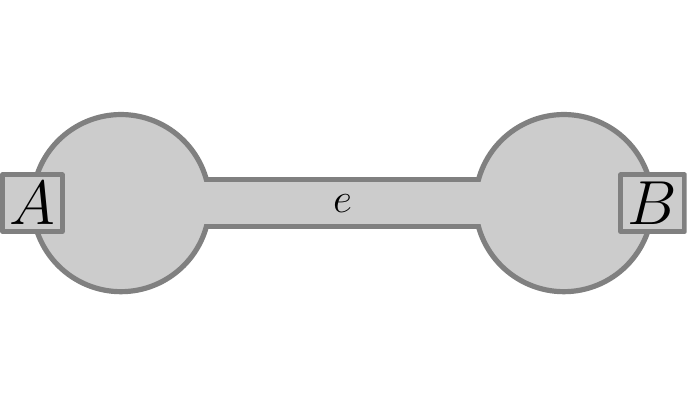}\\
    $G^{\{e\}}$\\
    \phantom{orientable loop $e$}
  \end{minipage}\\%
  $\longrightarrow$\qquad%
  \begin{minipage}[c]{.4\linewidth}
    \centering
    \includegraphics[scale=.5]{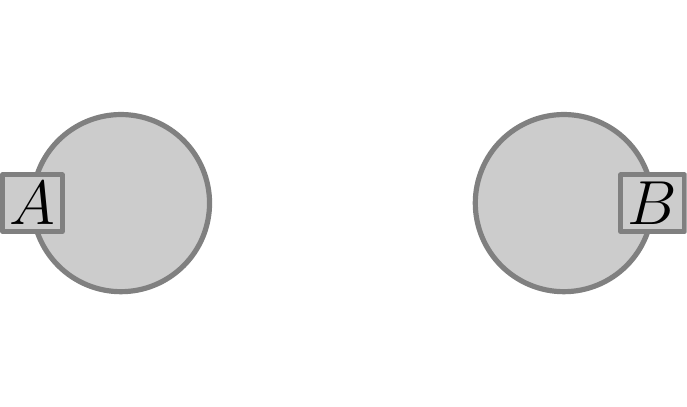}\\
    $G/e=G^{\{e\}}-e$
  \end{minipage}
  \caption{Contraction of an orientable loop}
  \label{fig:OrLoopContraction}
\end{figure}
\begin{figure}[!htp]
  \centering
  \begin{minipage}[c]{.4\linewidth}
    \centering
    \includegraphics[scale=.5]{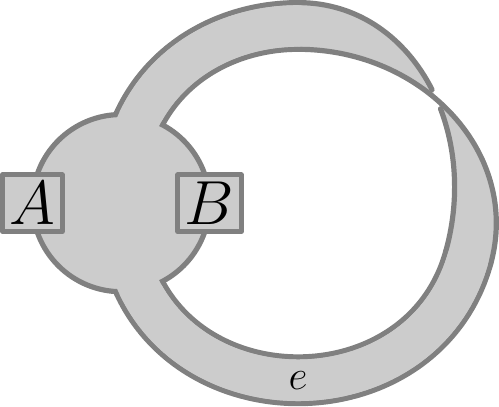}\\
    A ribbon graph $G$ with a non-orientable loop $e$
  \end{minipage}%
  \qquad$\longrightarrow$\qquad%
  \begin{minipage}[c]{.4\linewidth}
    \centering
    \includegraphics[scale=.5]{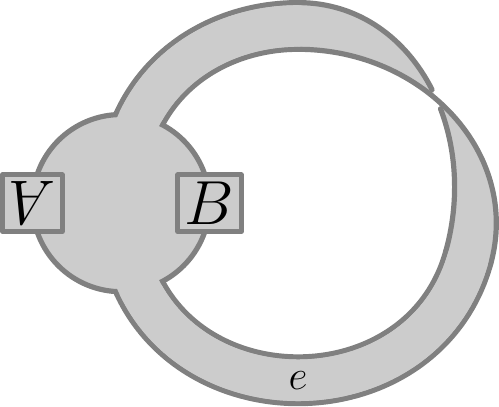}\\
    $G^{\{e\}}$\\
    \phantom{non-orientable loop $e$}
  \end{minipage}\\%
  $\longrightarrow$\qquad%
  \begin{minipage}[c]{.4\linewidth}
    \centering
    \includegraphics[scale=.5]{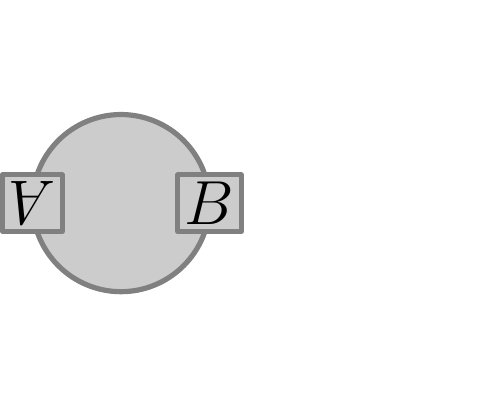}\\
    $G/e=G^{\{e\}}-e$
  \end{minipage}
  \caption{Contraction of a non-orientable loop}
  \label{fig:NonOrLoopContraction}
\end{figure}

% , and, as will be shown in section
% \cref{sec:generalised-duality-1}, it allows to get reduction relations
% for nontrivial loops.
\newpage
\section{Activities with respect to a quasi-tree}
\label{sec:activities-wrt-quasi}

\begin{defn}[Quasi-tree \citep{Dasbach2006aa,Champanerkar2007aa}]
  A quasi-tree $Q$ is a ribbon graph with $f(Q)=1$. Let $G$ be a
  ribbon graph that is not necessarily orientable. The set of spanning
  subribbon graphs of $G$ which are quasi-trees is denoted by $\mathbf{\cQ_{G}}$.
\end{defn}
A quasi-tree is a generalization of a spanning tree in the following
sense. If $G$ is a plane ribbon graph, then $\cQ_{G}$ is the set of
spanning trees of $G$. For a non-plane ribbon graph $G$, $\cQ_{G}$
contains the spanning trees of $G$ and each quasi-tree contains a spanning tree.

\begin{defn}[Crossing edges]\label{def:Cross}
  Let $G$ be a one-vertex ribbon graph. Let $e,e'\in E(G)$ be two edges of $G$. They intersect
  the vertex of $G$ along line segments $s_{1}(e),s_{2}(e),s_{1}(e')$
  and $s_{2}(e')$. The edges $e$ and $e'$ \textbf{cross} each other
  (written $e\lrtimes e'$) if, turning around the vertex of $G$ (in any direction), one meets
  the line segments of $e$ and $e'$ alternately, say $s_{1}(e),s_{1}(e'),s_{2}(e),s_{2}(e')$.
\end{defn}
For example, in \cref{fig:CrossEdges}, $e_{1}\lrtimes e_{2}$,
$e_{1}\lrtimes e_{3}$ but $e_{2}$ and $e_{3}$ do not cross each other.
\begin{figure}[!htp]
  \centering
  \subfloat[A ribbon graph
  $G$]{{\label{fig:CrossEdges}}\includegraphics[scale=.5]{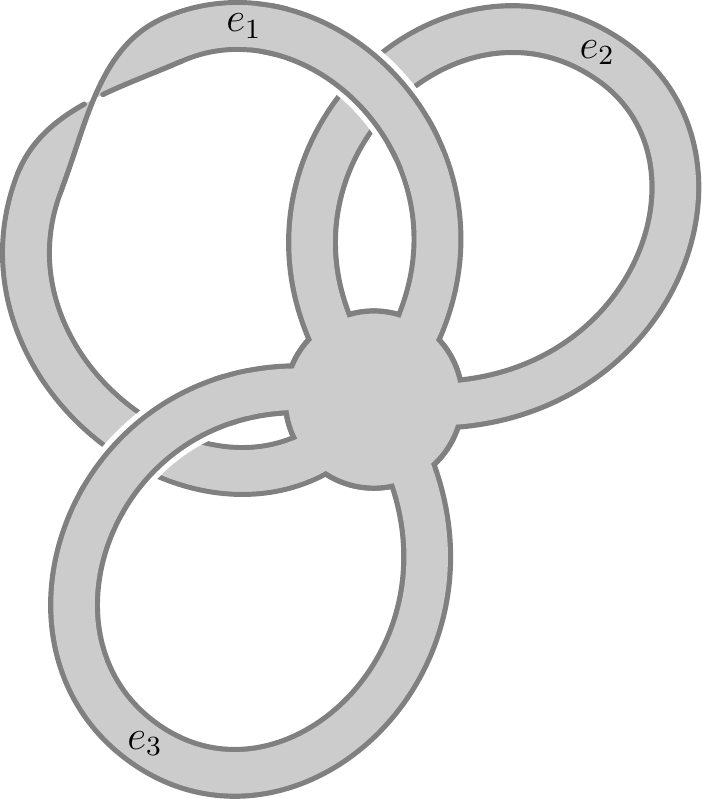}}\hspace{2cm}
  \subfloat[Its dual $\displaystyle G^{\{e_{1}\}}$]{{\label{fig:LinkEdges}}\includegraphics[scale=.5]{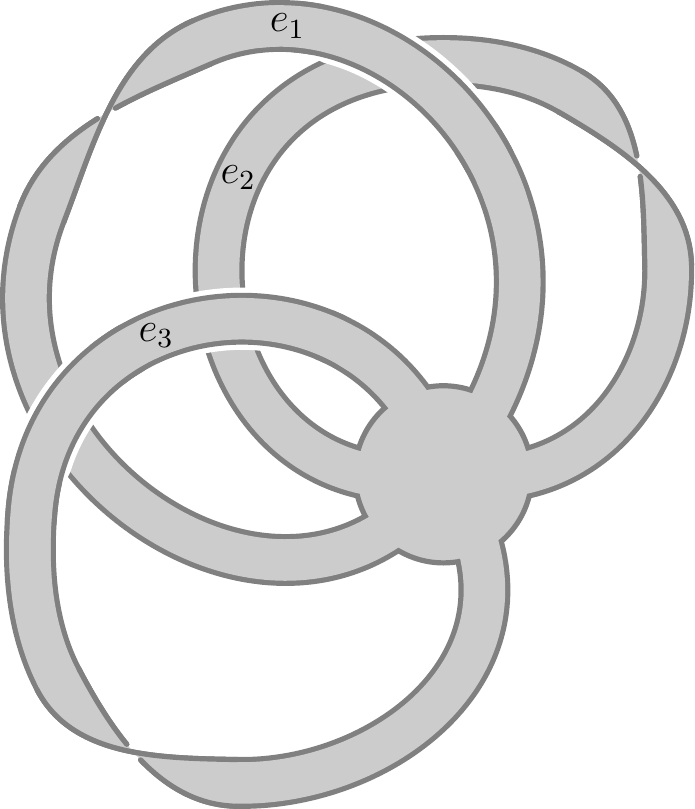}}
 \caption{Crossing and linking edges}
  \label{fig:CrossLinkEdges}
\end{figure}\\

If $Q$ is a quasi-tree of a ribbon graph $G$, the partial dual
$G^{E(Q)}$ of $G$ is a one-vertex ribbon graph.
\begin{defn}[Linking edges]\label{def:LinkingEdges}
  Let $G$ be a ribbon graph and $e,e'\in E(G)$ be two of its edges. Let
  $Q$ be a quasi-tree in $G$. We say that $e$ and $e'$ \textbf{link}
  each other (with respect to $Q$) if they cross each other in $G^{E(Q)}$.
\end{defn}
One of the quasi-trees of the ribbon graph of figure
\cref{fig:CrossEdges} is $\displaystyle F_{\{e_{1}\}}$. The edges $e_{2}$ and $e_{3}$ link each
other with respect to $F_{\{e_{1}\}}$: they cross each other in
$G^{\{e_{1}\}}$; see \cref{fig:LinkEdges}.\label{p:ONODuality}
 \begin{rem}
   In \citep{Champanerkar2007aa} the authors associated a chord diagram
   with any ribbon graph $G$ and quasi-tree $Q\in\cQ_{G}$. They defined two edges to link
   each other if their corresponding chords cross each other. This
   definition is actually the same as \cref{def:LinkingEdges} as the circle of the chord diagram in
   \citep{Champanerkar2007aa} is the boundary of the unique vertex in $G^{E(Q)}$.
 \end{rem}
 \begin{defn}[Activities with respect to a quasi-tree]\label{def:activities}
   Let $G$ be a ribbon graph and $Q\in\cQ_{G}$ one of its
   quasi-trees. Let $\prec$ be a total order on the set $E(G)$ of
   edges of $G$. An edge $e\in E(G)$ is said to be \textbf{live} if it does
   not link any lower-ordered edge; otherwise it is
   \textbf{dead}. Moreover $e$ is \textbf{internal} if it belongs to $E(Q)$ and
   \textbf{external} otherwise.

   We let $\mathbf{\cI(Q)}$ be the set of internally live edges of $G$ (with respect to
   $\prec$). Let $\mathbf{\cI_{\text{\textbf{o}}}(Q)}$ (resp.\@ $\mathbf{\cI_{\text{\textbf{n}}}(Q)}$) be the set of internally live
   edges that form orientable (resp.\@ non-orientable\footnote{As $v(G^{E(Q)})=1$, any edge of $G^{E(Q)}$ is a
   loop. In the following, when we write that an edge is orientable
   (or not) it always means ``as a loop in a certain $G^{E(Q)}$''.})
 loops in $G^{E(Q)}$. Obviously
   $\cI_{\text{o}}(Q)\cap\cI_{\text{n}}(Q)=\emptyset$ and
   $\cI(Q)=\cI_{\text{o}}(Q)\cup\cI_{\text{n}}(Q)$. We define similarly $\mathbf{\cE(Q)}$, $\mathbf{\cE_{\text{\textbf{o}}}(Q)}$ and
   $\mathbf{\cE_{\text{\textbf{n}}}(Q)}$ for the externally live
   edges.

   Finally we let $\mathbf{\cD(Q)}$ be the set of internally dead
   edges of $G$ with respect to $Q$ and $\prec$.
 \end{defn}
 One easily checks that for plane ribbon graphs, \cref{def:activities} of live (resp.\@ dead) edges coincides with the
 definition of active (resp.\@ inactive) edges in the spanning tree
 expansion of the \Tp{} \citep{Tutte1954aa}. In contrast, for non-plane ribbon
 graphs, those definitions are different. First of all there are more
 quasi-trees than spanning trees but even with respect to a spanning
 tree the activities are different. Let us once more consider the
 example of \cref{fig:CrossEdges} with $e_{1}\prec e_{2}\prec e_{3}$. The only spanning tree in $G$
 is $F_{\emptyset}$ (and $G^{\emptyset}=G$). All edges are externally
 active but $\cI=\cD=\emptyset$, $\cE=\cE_{\text{n}}=\{e_{1}\}$ and
 $e_{2},e_{3}$ are externally dead. With respect to the quasi-tree $F_{\{e_{1}\}}$, we have
$\cI=\cI_{\text{n}}=\{e_{1}\}$, $\cD=\emptyset$ and
$e_{2},e_{3}$ are externally dead.
\newpage
\section{Binary tree of partial resolutions}
\label{sec:binary-tree-partial}

Following \citep{Champanerkar2007aa}, we construct a rooted binary tree which
allows us to group the spanning subribbon graphs of a given connected ribbon graph into
packets labelled by the quasi-trees of $G$. The members of these
packets are in one-to-one correspondence with the subsets of orientable internally and
externally live edges; see \cref{lem:LiveDead}.

\subsection{Partial resolutions and duality}
\label{sec:part-resol-dual}

In this section, we prove two lemmas about resolutions and
quasi-trees. These lemmas will be useful for the proof of \cref{lem:LiveDead}. The proofs below use Chmutov's partial duality.

\begin{defn}[Resolutions]\label{def:resolutions}
  Let $G$ be a ribbon graph. A \textbf{resolution} $s$ of $G$ is a map from
  $E(G)$ into $\{0,1\}$. Each resolution determines a spanning
  subribbon graph $H_{s}$ such that $E(H_{s})\defi\lb e\in E(G)\tqs
  s(e)=1\rb$.

  A \textbf{partial resolution} $\rho$ of $G$ is a map from $E(G)$ into
  $\{0,1,*\}$. We define $H_{\rho}$ to be the spanning subribbon graph of $G$
  whose edge-set is $\lb e\in E(G)\tqs \rho(e)=1\rb$. We let $U(\rho)\defi\lb e\in E(G)\tqs \rho(e)=*\rb$ be
  the set of unresolved edges. Each partial resolution determines a subset of
  the spanning subribbon graphs of $G$:\\$[\rho]\defi\lb\text{resolutions $s$
    of $G$}\tqs s(e)=\rho(e)\text{ if }\rho(e)\in\{0,1\}\rb$.
\end{defn}
Let $F\subseteq G$ be a spanning subribbon graph of $G$. The number of faces
 of $F$ equals the number of vertices of its natural dual
 $F^{\star}$. But in the following it will be necessary to express
 this number in terms of the partial dual of $G$ with respect to $E(F)$, namely
 \begin{align}
   f(F)=&v(F^{\star})=v(G^{E(F)}).\label{eq:FacesVertexPartial}
 \end{align}
\begin{prop}\label{prop:F'vsF}
   Let $G$ be a ribbon graph and $F,F'\subseteq G$ two spanning subribbon graphs of $G$. Let $\Delta\defi\Delta(F,F')=(E(F)\cup
   E(F'))\setminus(E(F)\cap E(F'))$. Then we have
   \begin{align}
     f(F')=v\big((G^{E(F)}-\Delta^{c})/\Delta\big).
   \end{align}
 \end{prop}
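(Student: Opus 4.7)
The plan is to use equation (2), $f(F)=v(G^{E(F)})$, twice and to combine it with the composition rule for partial duality and the easy computation of the vertex count after contracting every edge. Write $\Delta=E(F)\triangle E(F')$ and set $H\defi G^{E(F)}$. I will prove
\[
f(F')\;=\;v(G^{E(F')})\;=\;v(H^{\Delta})\;=\;f(H-\Delta^{c})\;=\;v\bigl((H-\Delta^{c})/\Delta\bigr),
\]
which is the statement of the proposition. The first equality is \cref{eq:FacesVertexPartial}.

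For the second equality, I would first establish the general composition rule $(G^{A})^{B}=G^{A\triangle B}$ for arbitrary $A,B\subseteq E(G)$ from \cref{lem:SimpleProp}. Iterating the third item of the lemma gives $G^{X\sqcup Y}=(G^{X})^{Y}$ whenever $X\cap Y=\emptyset$. Writing $A=A_{0}\sqcup I$ and $B=B_{0}\sqcup I$ with $I=A\cap B$ and $A_{0},B_{0},I$ pairwise disjoint, one computes
\[
(G^{A})^{B}=\bigl(((G^{A_{0}})^{I})^{I}\bigr)^{B_{0}}=(G^{A_{0}})^{B_{0}}=G^{A_{0}\sqcup B_{0}}=G^{A\triangle B}
\]
using the first item of the lemma (involutivity) for the middle equality. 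Applied with $A=E(F)$ and $B=\Delta$ this yields $H^{\Delta}=(G^{E(F)})^{E(F)\triangle E(F')\triangle E(F)}=G^{E(F')}$, since $E(F)\triangle\Delta=E(F')$.

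For the third equality, observe that $H-\Delta^{c}$ is the spanning subribbon graph of $H$ with edge set $\Delta$, so \cref{eq:FacesVertexPartial} applied inside $H$ gives $f(H-\Delta^{c})=v(H^{\Delta})$. For the last equality, I contract every edge of $R\defi H-\Delta^{c}$; since $E(R)=\Delta$, this means computing $v(R/\Delta)$. Iterating \cref{def:Contraction} (so that $R/E(R)=R^{E(R)}-E(R)=R^{\star}-E(R)$, deleting edges being compatible with partial-dualizing the remaining ones) one has $v(R/\Delta)=v(R^{\star})=f(R)=f(H-\Delta^{c})$, where the second equality is the elementary face/vertex exchange under natural duality (the special case $E'=E(G)$ of \cref{eq:FacesVertexPartial}). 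Chaining the four identities gives the proposition.

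The only non-routine point is to justify passing from the single-edge \cref{def:Contraction} to the total-contraction formula $R/E(R)=R^{\star}-E(R)$. This reduces to the commutation of edge-deletion and partial duality on disjoint edges, which is transparent from the construction of the partial dual (deleting an edge only removes the corresponding pair of arrows in the combinatorial representation, leaving the resolution of every other edge unaffected) and can be proved by a short induction on $|E(R)|$. Once that lemma is in hand, the rest of the argument is the bookkeeping displayed above.
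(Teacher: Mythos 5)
Your proposal is correct and follows essentially the same route as the paper: both rest on \cref{eq:FacesVertexPartial}, the composition rule $G^{E(F')}=(G^{E(F)})^{\Delta}$, and the commutation of edge deletion with partial duality on disjoint edge sets (which the paper invokes inline, and which you rightly flag and prove by induction to justify $R/E(R)=R^{\star}-E(R)$). The only cosmetic difference is that you apply the face--vertex identity a second time inside $G^{E(F)}$, whereas the paper deletes all edges of $(G^{E(F)})^{\Delta}$ and commutes the deletion of $\Delta^{c}$ past the $\Delta$-dualization; the underlying facts are the same.
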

 \begin{proof}
   As in \cref{eq:FacesVertexPartial},
   $f(F')=v(G^{E(F')})$. But $G^{E(F')}=\big(G^{E(F)}\big)^{\Delta}$
   so
   $f(F')=v\Big(\big(G^{E(F)}\big)^{\Delta}\Big)=v\Big(\big(G^{E(F)}\big)^{\Delta}-E(G)\Big)$. Using
   $E(G)=\Delta\cup\Delta^{c}$ and for any ribbon graph $G$ and any
   $E',E''\subseteq E(G)$ such that $E'\cap E''=\emptyset$,
   $G^{E'}-E''=(G-E'')^{E'}$, we have
   $f(F')=v\Big(\big(G^{E(F)}-\Delta^{c}\big)^{\Delta}-\Delta\Big)=v\big((G^{E(F)}-\Delta^{c})/\Delta\big)$
   by \cref{def:Contraction}.
 \end{proof}

\begin{lemma}
   \label{lem:NOLoopQT}
   Let $G$ be a ribbon graph and $s$ a resolution of
   $G$ such that $H_{s}$ is a quasi-tree. Let $e$ be an edge of $G$, not necessarily in $E(H_{s})$. Let $s'$ be defined by
  \begin{align}
     s'=&
     \begin{cases}
       s&\text{on $E(G)\setminus\{e\}$},\\
       1-s&\text{on $\{e\}$}.
     \end{cases}
   \end{align}
   If $e$ is a non-orientable loop in $G^{E(H_{s})}$, then $H_{s'}$ is also a quasi-tree.
 \end{lemma}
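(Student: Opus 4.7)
The plan is to compute $f(H_{s'})$ directly using \Cref{prop:F'vsF}, exploiting the fact that $G^{E(H_s)}$ is a one-vertex ribbon graph (since $H_s$ is a quasi-tree) and that $s$ and $s'$ differ in exactly one edge.

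First, I would set $F=H_s$ and $F'=H_{s'}$ in \Cref{prop:F'vsF}. By construction, $E(H_s)$ and $E(H_{s'})$ differ only at $e$, so $\Delta\defi\Delta(H_s,H_{s'})=\{e\}$ and $\Delta^c=E(G)\setminus\{e\}$. The proposition then yields
\begin{equation*}
f(H_{s'})=v\bigl((G^{E(H_s)}-\Delta^c)/\{e\}\bigr).
\end{equation*}

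Next I would simplify the right-hand side. Since $H_s\in\cQ_G$, the partial dual $G^{E(H_s)}$ has a single vertex, and by hypothesis $e$ is a non-orientable loop at that vertex. Deletion of the edges in $\Delta^c$ removes all other edges but neither creates new vertices nor changes the way $e$ attaches to the unique vertex-disc, so $G^{E(H_s)}-\Delta^c$ is a one-vertex ribbon graph whose single edge $e$ is still a non-orientable loop, i.e.\@ a Möbius band. It then suffices to check that contracting the unique (non-orientable) loop of a Möbius-band ribbon graph produces exactly one vertex; this is precisely the picture of \Cref{fig:NonOrLoopContraction}. Concretely, by \Cref{def:Contraction} the contraction equals $(G^{E(H_s)}-\Delta^c)^{\{e\}}-e$, and the natural dual of a Möbius band is again a Möbius band (one can see this either via the $\chi=0$/non-orientability classification or directly from the combinatorial representation), so deleting $e$ afterwards leaves a single isolated vertex. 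Hence $v\bigl((G^{E(H_s)}-\Delta^c)/\{e\}\bigr)=1$, i.e.\@ $f(H_{s'})=1$, and $H_{s'}$ is a quasi-tree.

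The only delicate points I anticipate are (i) verifying that the non-orientable character of $e$ is preserved under deletion of the edges in $\Delta^c$ (which I would justify locally from the combinatorial representation) and (ii) identifying the result of contracting a non-orientable loop at a one-vertex graph with a single isolated vertex, for which \Cref{fig:NonOrLoopContraction} provides the model case. Everything else is a mechanical application of \Cref{prop:F'vsF} together with the duality identity $G^{E(H_{s'})}=(G^{E(H_s)})^{\Delta}$ from \Cref{lem:SimpleProp}.
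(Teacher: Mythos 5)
Your proposal is correct and follows essentially the same route as the paper's own proof: apply \cref{prop:F'vsF} with $F=H_{s}$, $F'=H_{s'}$ and $\Delta=\{e\}$, note that $G^{E(H_{s})}-\Delta^{c}$ is a one-vertex ribbon graph whose single edge $e$ is a non-orientable loop, and conclude via \cref{def:Contraction} (the M\"obius-band picture of \cref{fig:NonOrLoopContraction}) that its contraction has one vertex, so $f(H_{s'})=1$. Your extra justification of why contracting the non-orientable loop yields a single vertex only makes explicit what the paper leaves to the figure.
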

\begin{proof}
   We are going to use \cref{prop:F'vsF} with $F=H_{s}$ and
   $F'=H_{s'}$. As $e\in H_{s}\iff e\notin H_{s'}$, $\Delta=\{e\}$. $F$ being a quasi-tree, $G^{E(F)}$ is a one-vertex
   ribbon graph and $G^{E(F)}-\Delta^{c}\fide H'$ consists of the unique
   vertex of $G^{E(F)}$ and the loop $e$. By \cref{prop:F'vsF} the number of faces of $F'$ equals the number of
   vertices of $H'/\Delta$. Proving that
   $F'$ is a quasi-tree amounts to proving that $H'/\{e\}$ is a
   one-vertex graph. By assumption $e$ is non-orientable in
   $G^{E(H_{s})}$. It is then non-orientable in $H'$. Thanks to the
   \cref{def:Contraction}, its contraction leads to a
   one-vertex ribbon graph.
 \end{proof}

\begin{lemma}
   \label{lem:Quasi-trees}
   Let $G$ be a ribbon graph and $s$ a resolution of
   $G$ such that $H_{s}$ is a quasi-tree. Let $e,e'$ be two
   edges of $G$, not necessarily in $E(H_{s})$. Let $s'$ be defined by
  \begin{align}
     s'=&
     \begin{cases}
       s&\text{on $E(G)\setminus\{e,e'\}$},\\
       1-s&\text{on $\{e,e'\}$}.
     \end{cases}
   \end{align}
   If $e$ and $e'$ link each other with respect to $H_{s}$ and at most one of them is
   a non-orientable loop in $G^{E(H_{s})}$, then $H_{s'}$ is also a quasi-tree.
 \end{lemma}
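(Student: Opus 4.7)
The plan is to mimic the proof of \cref{lem:NOLoopQT} by applying \cref{prop:F'vsF} with $F=H_s$ and $F'=H_{s'}$. Since $s$ and $s'$ differ exactly on $\{e,e'\}$, the symmetric difference is $\Delta=\{e,e'\}$. Because $H_s$ is a quasi-tree, $G^{E(H_s)}$ has a unique vertex, so $H'\defi G^{E(H_s)}-\Delta^c$ consists of that single vertex equipped with the two loops $e$ and $e'$. Crossing (\cref{def:Cross}) depends only on the cyclic order of the line segments around the shared vertex, and that order is unaffected by deleting the other edges, so the linking hypothesis guarantees that $e$ and $e'$ still cross in $H'$.

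I would then rewrite the conclusion $f(H_{s'})=v(H'/\Delta)$ of \cref{prop:F'vsF} in a purely combinatorial form. Unfolding \cref{def:Contraction} and applying \cref{eq:FacesVertexPartial} to $H'$ itself (where, because $\Delta=E(H')$, the relevant spanning subribbon graph is all of $H'$) yields
\begin{align*}
  v(H'/\Delta) = v\bigl((H')^{\Delta}-\Delta\bigr) = v\bigl((H')^{\Delta}\bigr) = f(H').
\end{align*}
Proving that $H_{s'}$ is a quasi-tree therefore reduces to showing $f(H')=1$ for the one-vertex ribbon graph $H'$ with two crossing loops of which at most one is non-orientable.

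This last step is an elementary boundary-component count, most quickly done via the Euler characteristic: $\chi(H')=v(H')-e(H')=-1$, so the closed surface obtained by capping off the boundaries of $H'$ has Euler characteristic $f(H')-1$, and its topological type is pinned down by the orientability and the crossing of the two loops. Two crossing orientable loops close up to a torus, and one crossing orientable loop together with one crossing non-orientable loop close up to a Klein bottle; both closed surfaces have $\chi=0$, hence $f(H')=1$. The forbidden case of two crossing non-orientable loops gives $f(H')=2$, which is exactly why the orientability restriction appears in the hypothesis. The only real obstacle is the combinatorial bookkeeping of the first two paragraphs, which is why it seems cleaner to invoke \cref{prop:F'vsF} once with $\Delta=\{e,e'\}$ rather than to try to flip the two edges in succession (an intermediate single flip need not satisfy the hypothesis of \cref{lem:NOLoopQT}).
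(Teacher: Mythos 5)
Your argument is correct, and its skeleton is exactly the paper's: apply \cref{prop:F'vsF} with $F=H_s$, $F'=H_{s'}$, note $\Delta=\{e,e'\}$, and reduce to the one-vertex ribbon graph $H'=G^{E(H_s)}-\Delta^{c}$ consisting of the two crossing loops (your remark that flipping the edges one at a time would not stay within the hypotheses of \cref{lem:NOLoopQT} is precisely why both proofs treat $\{e,e'\}$ simultaneously). Where you diverge is the last step: the paper evaluates $v(H'/\Delta)$ by contracting the two loops successively with the partial-duality contraction of \cref{def:Contraction} (a two-case analysis: crossing orientable loops give two vertices joined by a bridge, then a point; a non-orientable loop crossed by an orientable one gives a single non-orientable loop, then a point), whereas you convert $v(H'/\Delta)$ into $f(H')$ via $(H')^{\Delta}=(H')^{\star}$ and then count faces by Euler characteristic and the topological type of the capped-off surface. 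Your conversion step is sound (it uses only \cref{lem:SimpleProp} and the commutation of deletion with partial duality on disjoint edge sets, both already used in \cref{prop:F'vsF}), and the surface identifications (punctured torus, punctured Klein bottle, and $f=2$ in the excluded doubly non-orientable case) are true; the only slightly breezy point is that ``closes up to a torus/Klein bottle'' is asserted rather than derived --- read literally, deducing the closed surface from $\chi$ and orientability presupposes knowing $f(H')$, which is what is being computed. This is easily made airtight either by tracing the single boundary component of the disc with two interleaved bands directly, or by recognizing $H'$ as the neighbourhood of the one-skeleton in the standard square models $aba^{-1}b^{-1}$ and $abab^{-1}$; the paper's successive-contraction computation is the same verification in partial-duality clothing. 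In short: same reduction, a mildly different and equally valid finish, modulo that one routine detail to spell out.
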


 \begin{proof}
   We distinguish between three cases: $1.$ $e,e'\in E(H_{s})$, $2.$
   neither $e$ nor $e'$ belongs to $E(H_{s})$ and $3.$ $e\in E(H_{s})$
   and $e'\notin E(H_{s})$ (or the converse). We are now going to use
   \cref{prop:F'vsF} with $F=H_{s}$ and $F'=H_{s'}$. In the
   three cases, $\Delta=\{e,e'\}$. $H_{s}=F$ being a quasi-tree,
   $G^{E(F)}$ is a one-vertex ribbon graph. Then $G^{E(F)}-\Delta^{c}$
   consists of the vertex of $G^{E(F)}$ and the two loops $e$ and
   $e'$. By assumption these link each other which means that
   they cross each other in $G^{E(F)}$.

   We have to consider two cases: $1.$ both $e$ and $e'$
   are orientable in $G^{E(H_{s})}$, $2.$ one of them is
   non-orientable, say $e$ and the other one $(e')$ is orientable.
   \begin{enumerate}
   \item The contraction of $e$ gives two vertices linked by a bridge
     $e'$. The contraction of $e'$ is a single vertex.
   \item The contraction of $e$ leads to a one-vertex ribbon graph
     with a single \emph{non-orientable} loop $e'$. The contraction of $e'$ leads to a single vertex and $f(F')=1$. 
   \end{enumerate}
\end{proof}

%\newpage
\subsection{Binary tree}
\label{sec:binary-tree}

\begin{defn}[Nugatory edges]\label{def:nugatory}
  Let $G$ be a ribbon graph and $\rho$ one of its partial
  resolutions. Let $e\in E(G)$ and $\rho_{0}^{e}$ (resp.\@
  $\rho_{1}^{e}$) be the partial resolution of $G$ obtained from
  $\rho$ by resolving $e$ to be $0$ (resp.\@ $1$). The edge $e$ is
  called \textbf{nugatory} if $[\rho_{0}^{e}]$ or
  $[\rho_{1}^{e}]$ does not contain any quasi-tree of $G$.
\end{defn}
For any connected ribbon graph $G$ and any total order on $E(G)$, we now describe the construction of the
binary tree $\cT(G)$. Each of its nodes is a partial resolution of
$G$. The construction essentially follows
\citep{Champanerkar2007aa}. Let the root of $\cT(G)$ be the totally
unresolved partial resolution of $G$: for all $e\in E(G)$,
$\rho(e)=*$. We resolve edges, in the reverse order (starting with the
highest edge), by changing $*$ to $0$ or $1$. If an edge is nugatory,
it is left unresolved and we proceed to the next edge. For a given
node $\rho$ in $\cT(G)$, if $e$ is not nugatory then the left child is
$\rho_{0}^{e}$ and the right child is $\rho_{1}^{e}$. We terminate
this process at a leaf when all subsequent edges are nugatory or all
edges have been resolved.\\

Let us now give an example of such a binary tree. We consider the ribbon graph of \cref{fig:CrossEdges} with $e_{1}\prec e_{2}\prec e_{3}$. The associated binary tree is represented in \cref{fig:BinaryTreeEx}. Each node of the tree is a partial resolution; for instance $*10$ corresponds to $\rho(e_{1})=*,\rho(e_{2})=1$ and $\rho(e_{3})=0$.
\begin{figure}[!htp]
  \centering
  \includegraphics[scale=.6]{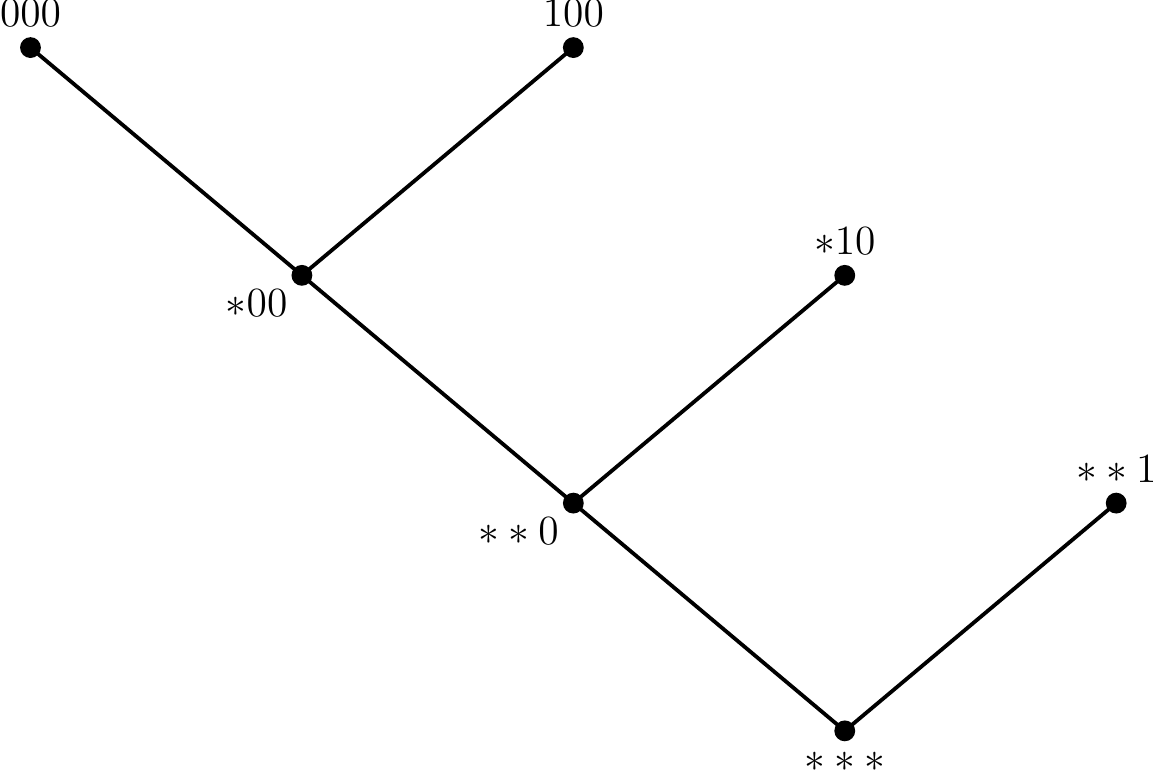}
  \caption{A binary tree of partial resolutions}
  \label{fig:BinaryTreeEx}
\end{figure}

By construction, each leaf $\rho$ of such a binary tree $\cT(G)$ is a
partial resolution of $G$ all the unresolved edges of which are
nugatory. Therefore there exists a unique resolution $s\in [\rho]$
such that $H_{s}$ is a quasi-tree. Indeed, let us consider a node of
the binary tree $\cT(G)$ i.e. a partial resolution $\sigma$ of $G$. Let
$e$ be the edge to be tested at this node. If $e$ is nugatory, either
$[\sigma^{e}_{0}]$ or $[\sigma^{e}_{1}]$ contains a quasi-tree. If $e$ is
not nugatory, they both contain a quasi-tree. Thus, by induction, for each leaf
$\rho$ of $\cT(G)$, $[\rho]$ contains at least one quasi-tree. Let us
assume that it contains more than one quasi-tree. This would mean that
there exists an unresolved edge $e$ in $\rho$ such that both
$[\rho_{0}^{e}]$ and $[\rho_{1}^{e}]$ contain a quasi-tree. But this
is in contradiction with the fact that all unresolved edges of a leaf are nugatory.

We let $\mathbf{Q_{\rho}}$ be the unique quasi-tree in $[\rho]$. For each spanning subribbon graph $H_{s},\,s\in [\rho]$ we define $Q_{H_{s}}$ to be $Q_{\rho}$.
\begin{lemma}
  \label{lem:LiveDead}
  Let $G$ be a connected ribbon graph. Let $\rho$ be a leaf of $\cT(G)$, and let $Q_{\rho}$ be the
  corresponding quasi-tree. If $e$ is unresolved in $\rho$ then $e$ is
  orientable in $G^{E(Q_{\rho})}$ and live with respect to $Q_{\rho}$. If $e$ is resolved in
  $\rho$, it is either dead with respect to $Q_{\rho}$ or non-orientable in $G^{E(Q_{\rho})}$
  and live.
\end{lemma}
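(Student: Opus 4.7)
The lemma consists of two implications: (I) if $e$ is unresolved in $\rho$ then $e$ is orientable in $G^{E(Q_{\rho})}$ and live with respect to $Q_{\rho}$, and (II) if $e$ is resolved in $\rho$ then either $e$ is dead with respect to $Q_{\rho}$, or $e$ is non-orientable in $G^{E(Q_{\rho})}$ and live. The common handle in both directions is the node $\sigma_{e}$ of $\cT(G)$ at which $e$ is tested: at $\sigma_{e}$, every edge $\prec e$ is still unresolved (not yet tested), and each edge $\succ e$ is either already resolved (it was not nugatory at its test node) or was declared nugatory and left unresolved. In all cases $Q_{\rho}\in[\rho]\subseteq[\sigma_{e}]$.

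For (I), $e$ being unresolved in $\rho$ means $e$ was nugatory at $\sigma_{e}$, so only one value of $e$ admits a quasi-tree in $[\sigma_{e}]$. If $e$ were non-orientable in $G^{E(Q_{\rho})}$, \cref{lem:NOLoopQT} would produce a quasi-tree from $Q_{\rho}$ by flipping only $e$, lying in $[\sigma_{e}]$ with the opposite value on $e$---contradicting nugatory. Knowing that $e$ is orientable, if some $e'\prec e$ linked $e$ with respect to $Q_{\rho}$, then $e'$ would be unresolved at $\sigma_{e}$, and \cref{lem:Quasi-trees} (applicable because at most one of $e,e'$ is non-orientable) would yield a quasi-tree in $[\sigma_{e}]$ by flipping both $e$ and $e'$ in $Q_{\rho}$, again with opposite value on $e$---the same contradiction.

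For (II), I argue the contrapositive: if $e$ is orientable in $G^{E(Q_{\rho})}$ and live with respect to $Q_{\rho}$, then $e$ is unresolved. Assume instead that $e$ is resolved. Then $e$ is not nugatory at $\sigma_{e}$, so there exists a quasi-tree $Q'\in[\sigma_{e}]$ with the value of $e$ opposite to that in $Q_{\rho}$. Set $D\defi E(Q_{\rho})\mathbin{\triangle}E(Q')$. No edge $e''\succ e$ can lie in $D$: such $e''$ is unresolved at $\sigma_{e}$, hence was declared nugatory at its own test node $\sigma_{e''}$, which forces every quasi-tree of $[\sigma_{e''}]\supseteq[\sigma_{e}]$---in particular $Q_{\rho}$ and $Q'$---to share the same value on $e''$. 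Therefore every edge of $D\setminus\{e\}$ satisfies $\prec e$.

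It remains to exhibit an edge of $D\setminus\{e\}$ linking $e$ with respect to $Q_{\rho}$, which will contradict liveness. I apply \cref{prop:F'vsF} to $F=Q_{\rho}$ and $F'=Q'$: since $f(Q')=1$, the one-vertex ribbon graph $B\defi G^{E(Q_{\rho})}-D^{c}$ (the bouquet of the loops of $D$) must satisfy $v(B/D)=1$. However, contracting the orientable loop $e$ first splits the unique vertex of $B$ into two vertices; and if no other loop of $D$ crossed $e$ in $B$, then each remaining loop of $D\setminus\{e\}$ stays attached to only one of those two vertices, so no subsequent contraction can re-merge them and $v(B/D)\geq 2$---a contradiction. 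Hence some $e'\in D\setminus\{e\}$ crosses $e$ in $G^{E(Q_{\rho})}$, i.e.\@ links $e$ with respect to $Q_{\rho}$; together with $e'\prec e$ this shows that $e$ is dead, contradicting our assumption. I expect this bouquet-contraction step to be the main topological obstacle: it is the new ingredient required by the non-orientable setting, beyond what \cref{lem:NOLoopQT,lem:Quasi-trees} provide.
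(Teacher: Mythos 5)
Your proof is correct and rests on the same three ingredients as the paper's (\cref{lem:NOLoopQT}, \cref{lem:Quasi-trees}, and \cref{prop:F'vsF} combined with the fact that contracting an orientable loop splits its vertex in two, and that only a crossing loop can rejoin the two pieces), but the binary-tree bookkeeping is organized differently and, I think, more cleanly. For liveness of unresolved edges the paper splits into two cases according to whether the linked lower-ordered edge is resolved or unresolved in the leaf $\rho$, the resolved case requiring a detour through the closest parent at which that edge is tested; you instead observe once and for all that at the node $\sigma_{e}$ where $e$ itself is tested every lower-ordered edge is still unresolved, so the flipped resolution produced by \cref{lem:Quasi-trees} automatically lies in $[\sigma_{e}]$ and contradicts the fact that $e$ is nugatory there --- one case instead of two. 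For resolved edges the paper shows that an orientable edge linking only higher-ordered resolved edges would leave one entire branch at its test node free of quasi-trees; you argue the contrapositive by extracting a single witness quasi-tree $Q'$ from the opposite branch and analyzing $D=E(Q_{\rho})\,\triangle\,E(Q')$, showing that $D\setminus\{e\}$ contains only lower-ordered edges and that one of them must cross $e$, since otherwise $f(Q')=v\big((G^{E(Q_{\rho})}-D^{c})/D\big)\geq 2$. This is the same topological step the paper invokes, run in the opposite logical direction, and a side benefit is that it absorbs the paper's separate sub-case where the resolved edge links an unresolved one. Two small remarks: the sentence ``such $e''$ is unresolved at $\sigma_{e}$'' should be read as ``such $e''$, if not already resolved at $\sigma_{e}$ (in which case $[\sigma_{e}]$ fixes its value and it cannot lie in $D$), is unresolved and was therefore nugatory at $\sigma_{e''}$''; and the bouquet-contraction step is not really the new non-orientable ingredient --- it is already present in the orientable argument --- the genuinely new inputs for non-orientability are the orientability hypotheses in \cref{lem:NOLoopQT} and \cref{lem:Quasi-trees}, which you do use correctly.
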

\begin{proof}
  Let $e$ be an unresolved edge of a leaf $\rho$ of
  $\cT(G)$. If $e$ is non-orientable in $G^{E(Q_{\rho})}$
  then by \cref{lem:NOLoopQT} there exist two different resolutions
  in $[\rho]$ corresponding to quasi-trees. This
  contradicts the fact that $e$ is nugatory. As a conclusion,
  nugatory edges are orientable in $G^{E(Q_{\rho})}$.\\

  \noindent
  Let $e_{i}$ and $e_{j}$ be two unresolved edges in $\rho$, which are
  therefore nugatory and orientable in $G^{E(Q_{\rho})}$. If $e_{i}\lrtimes e_{j}$, by \cref{lem:Quasi-trees}, there exists two different resolutions in $[\rho]$
  corresponding to quasi-trees. This contradicts the fact that $e_{i}$ and
  $e_{j}$ are nugatory. Thus unresolved edges can only link resolved
  ones.\\

  \noindent
  Suppose $e_{i}$ is unresolved in $\rho$ and links a resolved edge
  $e_{j}$ with $j\prec i$. Let $s\in [\rho]$ be the resolution
  such that $H_{s}=Q_{\rho}$. The edge $e_{i}$ being unresolved in $\rho$, is
  orientable in $G^{E(Q_{\rho})}$, so we can apply \cref{lem:Quasi-trees}. Thus there exists another partial resolution
  $s'$ such that $f(H_{s'})=1$. $s'$ is obtained from $s$ by
  changing only $s(e_{i})$ and $s(e_{j})$.

  Now there exists a unique closest parent $\widetilde{\rho}$ of
  $\rho$ in $\cT(G)$ such that $e_{j}$ is a non-nugatory unresolved edge
  in $\widetilde{\rho}$. If $s\in[\widetilde{\rho}^{e_{j}}_{0}]$ (say)
  then $s'\in[\widetilde{\rho}^{e_{j}}_{1}]$. This implies that $e_{i}$
  is not nugatory in $\widetilde{\rho}$ and contradicts the
  assumption that $j\prec i$ because if that were the case and since edges are
  resolved in the reverse order, $e_{i}$ should be nugatory in
  $\widetilde{\rho}$. Thus if $e_{i}\lrtimes e_{j}$, $i\prec j$ and $e_{i}$
  is live.\\

  \noindent
  Finally, let $e_{i}$ be a resolved edge in $\rho$. If $e_{i}$ links
  an unresolved edge $e_{j}$ then by the previous argument $j\prec i$ and
  $e_{i}$ is dead. So let us assume that $e_{i}$ only links resolved
  edges $\{e_{j}\}_{j\in R},\,R\subset\{1,\dotsc,|E(G)|\}$. If there
  exists one $j\in R$ such that $j\prec i$, $e_{i}$ is dead. Suppose
  therefore that for all $j\in R,\,i\prec j$. There exists a unique closest parent $\widetilde{\rho}$ of
  $\rho$ in $\cT(G)$ such that $e_{i}$ is a non-nugatory unresolved edge
  in $\widetilde{\rho}$. Edges are resolved in reverse order, so
  the $e_{j}$'s, $j\in R$ are resolved in $\widetilde{\rho}$. Moreover
  both $[\widetilde{\rho}^{e_{i}}_{0}]$ and
  $[\widetilde{\rho}^{e_{i}}_{1}]$ contain a quasi-tree. If $e_{i}$ is
  orientable and does not link an unresolved edge, it is an orientable
  trivial loop in $G^{E(Q_{\rho})}-\{e_{j}\}_{j\in R}$. Suppose that
  $\rho\in [\widetilde{\rho}^{e_{i}}_{0}]$ (resp.\@
  $[\widetilde{\rho}^{e_{i}}_{1}]$). Then by proposition
  \cref{prop:F'vsF}, and since $\Delta$ and $\Delta^c$ being disjoint,
  we can change the order of contraction and deletion, for all $s\in
  [\widetilde{\rho}^{e_{i}}_{1}]$ (resp.\@
  $[\widetilde{\rho}^{e_{i}}_{0}]$), and $f(H_{s})=v(G^{E(Q_{\rho})}/\Delta-\Delta^{c})\ges 2$ with $e_{i}\in\Delta$ and for all $j\in R$, $e_{j}\notin\Delta$. Thus either $[\widetilde{\rho}^{e_{i}}_{0}]$ or $[\widetilde{\rho}^{e_{i}}_{1}]$
  does not contain any quasi-tree which contradicts the fact that
  $e_{i}$ is resolved. Therefore $e_{i}$ links an unresolved edge and
  is dead.\\
Note finally that if $R=\emptyset$ i.e. if $e_{i}$ does not link any
edge, exactly the same reasoning applies as well. Namely, if
$e_{i}\in[\widetilde{\rho}_{0}^{e_{i}}]$
(resp. $e_{i}\in[\widetilde{\rho}_{1}^{e_{i}}]$), $[\widetilde{\rho}_{1}^{e_{i}}]$
(resp. $[\widetilde{\rho}_{0}^{e_{i}}]$) does not contain any
quasi-tree. This contradicts the fact that $e_{i}$ is resolved in
$\rho$ and proves that $e_{i}$ links an unresolved edge.
\end{proof}
\begin{rem}
  Concerning the last part of the preceding proof, if $e_{i}$ is non-orientable and only links higher-ordered edges, it does not need to link an unresolved edge to ensure that both $[\widetilde{\rho}^{e_{i}}_{0}]$ and
    $[\widetilde{\rho}^{e_{i}}_{1}]$ contain a quasi-tree. Thus non-orientable (resolved) edges may be live. For example, in the leaf $100$ of the binary tree in \cref{fig:BinaryTreeEx} (which corresponds to the graph of \cref{fig:CrossEdges}), the edge $e_{1}$ is non-orientable, resolved and live.
\end{rem}
To sum up this section, we have proven the following
\begin{cor}\label{cor:SpannSubgBij}
Let $G$ be a connected ribbon graph and $\cS_{G}$ its set of spanning subribbon graphs. Given a total order on $E(G)$, $\cS_{G}$ is in one-to-one correspondence with $\bigcup_{Q\in\cQ_{G}}\cI_{\text{o}}(Q)\times\cE_{\text{o}}(Q)$. Namely to each spanning subribbon graph $F$ there corresponds a unique quasi-tree $Q_{F}$. Then, there exists $S\subseteq \cI_{\text{o}}(Q_{F})\cup\cE_{\text{o}}(Q_{F})$ such that $E(F)=\cD(Q_{F})\cup\cI_{\text{n}}(Q_{F})\cup S$.
\end{cor}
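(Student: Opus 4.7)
The plan is to read off both the map and its inverse directly from the binary tree $\cT(G)$ constructed in the previous subsection, relying on the classification provided by \cref{lem:LiveDead}. First, identify each spanning subribbon graph $F\in\cS_G$ with its characteristic resolution $s$, so that $F=H_s$. Because $\cT(G)$ is built deterministically by splitting on non-nugatory edges, its leaves partition the set of all resolutions of $G$: each $s$ belongs to $[\rho]$ for a unique leaf $\rho$. Declaring $Q_F\defi Q_\rho$, the unique quasi-tree in $[\rho]$ established at the end of \cref{sec:binary-tree}, then defines the map $F\mapsto Q_F$ unambiguously.

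Next, I would use \cref{lem:LiveDead} to describe $E(F)$ in terms of $Q_F$. At the leaf $\rho$ associated to $F$, that lemma says that the unresolved edges of $\rho$ are exactly the orientable live edges in $G^{E(Q_\rho)}$, hence form the set $\cI_{\text{o}}(Q_F)\cup\cE_{\text{o}}(Q_F)$, while every resolved edge is either dead or non-orientable-and-live. Among the resolved edges, those set to $1$ lie in $E(Q_F)$ (they are internal) and hence belong to $\cD(Q_F)\cup\cI_{\text{n}}(Q_F)$; those set to $0$ are external and belong to the externally dead edges or to $\cE_{\text{n}}(Q_F)$. Since $E(F)$ is the set of edges of value $1$ under $s$, it consists of the resolved-to-$1$ edges $\cD(Q_F)\cup\cI_{\text{n}}(Q_F)$ together with whichever unresolved edges $s$ sets to $1$; calling that latter set $S$ gives the decomposition $E(F)=\cD(Q_F)\cup\cI_{\text{n}}(Q_F)\cup S$ with $S\subseteq\cI_{\text{o}}(Q_F)\cup\cE_{\text{o}}(Q_F)$ required by the statement.

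Finally, to establish the bijection, I would write down the inverse. Given $Q\in\cQ_G$, there is exactly one leaf $\rho_Q$ of $\cT(G)$ with $Q_{\rho_Q}=Q$: otherwise two distinct leaves would share a common resolution (the one cutting out $Q$), contradicting the disjointness of leaf-packets. For each $S\subseteq\cI_{\text{o}}(Q)\cup\cE_{\text{o}}(Q)$, define the resolution $s_{Q,S}\in[\rho_Q]$ that agrees with $\rho_Q$ on the resolved edges, takes value $1$ on the unresolved edges lying in $S$ and value $0$ on the remaining unresolved edges. Then $H_{s_{Q,S}}$ is the unique spanning subribbon graph $F$ with $Q_F=Q$ and $E(F)=\cD(Q)\cup\cI_{\text{n}}(Q)\cup S$, and this inverts the map of the previous paragraph.

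The only delicate point is the identification, at the leaf $\rho$, of the resolved-to-$1$ edges with $\cD(Q_F)\cup\cI_{\text{n}}(Q_F)$: this is where both directions of \cref{lem:LiveDead} are used, together with the observation that $\cD$ refers only to internally dead edges while externally dead edges are forced to value $0$ in $\rho_Q$ and therefore do not enter $E(F)$. Once this bookkeeping is carried out, the corollary reduces to a direct reading of \cref{sec:binary-tree}.
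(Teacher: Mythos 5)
Your proposal is correct and follows essentially the same route as the paper, which presents this corollary as a direct summary of the binary-tree construction (each resolution lies in a unique leaf packet $[\rho]$, whose unique quasi-tree defines $Q_F$) combined with \cref{lem:LiveDead} identifying the unresolved edges with $\cI_{\text{o}}(Q_F)\cup\cE_{\text{o}}(Q_F)$ and the edges resolved to $1$ with $\cD(Q_F)\cup\cI_{\text{n}}(Q_F)$. You merely make explicit the bookkeeping (disjointness of leaf packets, the inverse map $(Q,S)\mapsto H_{s_{Q,S}}$) that the paper leaves implicit.
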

\newpage
\section{Non-orientable quasi-tree expansions}
\label{sec:non-orientable-quasi}

\subsection{The (signed) \BRp}
\label{sec:brp}

This section is devoted to the statement and proof of our main theorem, namely a quasi-tree expansion of the signed \BRp{} of not necessarily orientable ribbon graphs. For any subribbon graph $F$ of $G$, we let $t(F)$ be $0$ if $F$ is orientable and $1$ otherwise. Recall that for any ribbon graph $G$, the (unsigned) \BRp{} is defined by \citep{Bollobas2002aa}
\begin{align}
  \label{eq:BRpDef}
  R(G;x,y,z,w)=&\sum_{F\subseteq G}(x-1)^{r(G)-r(F)}y^{n(F)}z^{(k-f+n)(F)}w^{t(F)}
\end{align}
considered as an element of the quotient of $\Z[x,y,z,w]$ by the ideal generated by $w^{2}-w$. 

S.~Chmutov and I.~Pak introduced an extension of the \BRp{} at $w=1$ \citep{Chmutov2007ab}. It is a three-variable \textbf{polynomial} $R_{s}$ defined on \emph{signed} ribbon graphs. Recall that a graph is said to be signed if to each of its edges, an element of $\{+,-\}$ is assigned.

For any signed ribbon graph $G$, let $E_{+}(G)$ (resp.\@ $E_{-}(G)$) be the set of positive (resp.\@ negative) edges of $G$, and let $e_{\pm}(G)$ be their respective cardinalities. For any spanning subribbon graph $F$ of $G$, let $\bar{F}$ denote the spanning subribbon graph of $G$ with edge-set $E(F)^{c}$. Let us finally define $s(F)\defi\frac 12(e_{-}(F)-e_{-}(\bar{F}))$. The signed \BRp{} is
\begin{align}
  \label{eq:SignedBRpDef}
  R_{s}(G;x+1,y,z)=&\sum_{F\subseteq G}x^{k(F)-k(G)+s(F)}y^{n(F)-s(F)}z^{(k-f+n)(F)}.
\end{align}
If all the edges of $G$ are positive, $R_{s}(G;x,y,z)=R(\widetilde{G};x,y,z,1)$ where $\widetilde{G}$ is the underlying unsigned ribbon graph in $G$.\\

Before stating our main theorem, we need to recall the definition of the rank polynomial of C.~Godsil and G.~Royle \citep{Godsil2001aa}. It is a four-variable polynomial defined on matroids. Nevertheless, restricting ourselves to graphic matroids, we can easily deduce a version of this polynomial for graphs.
\begin{defn}[The Rank polynomial \citep{Godsil2001aa}]\label{def:RankPDef}
  Let $G$ be a graph (not a ribbon graph). The rank polynomial is defined as follows:
  \begin{align}
    \label{eq:RankPDef}
    Ra(G;\alpha,\beta,\gamma,\delta)=&\sum_{F\subseteq G}\alpha^{e_{+}(\Fb)+e_{-}(F)}\beta^{e_{+}(F)+e_{-}(\Fb)}\gamma^{k(F)-k(G)}\delta^{n(F)}
  \end{align}
where the sum runs over the spanning subgraphs of $G$.
\end{defn}
Note that the rank polynomial is homogeneous in $\alpha,\beta$: the sum of the exponents of $\alpha$ and $\beta$ is constant and equals $e(G)$. Thus we have
\begin{align}
  \label{eq:HomogRkP}
  Ra(G;\alpha,\beta,\gamma,\delta)=&\alpha^{e(G)}Ra(G;1,\beta/\alpha,\gamma,\delta).
\end{align}
The rank polynomial is a generalization of the \Tp{}:
\begin{align}
  T(G;x,y)\defi\sum_{F\subseteq G}(x-1)^{k(F)-k(G)}(y-1)^{n(F)}=Ra(G;1,1,x-1,y-1).
\end{align}\\

The signed and unsigned \BRp s are multiplicative on disjoint unions of ribbon graphs, so we can restrict ourselves to connected ribbon graphs, without loss of generality.
\begin{defn}\label{def:GQ}
  Let $G$ be a connected ribbon graph. For any total order on $E(G)$
  and any quasi-tree $Q\in\cQ_{G}$, let $\mathbf{G_{Q}}$ be the graph
  (not the ribbon graph) whose vertices are the components of
  $F_{\cD(Q)\cup\cI_{\text{n}}(Q)}$ and whose edges are the
  internally live orientable edges (namely the elements of
  $\cI_{\text{o}}(Q)$). In other words, consider the graph $\widetilde
  G$ underlying $G$. There is obviously a bijection $f$ between $E(G)$
  and $E(\widetilde G)$. Then $G_{Q}\defi \widetilde
  G/f(\cD(Q)\cup\cI_{\text{n}}(Q))$ (remember that, in a graph, the contraction of
  a loop consists in its deletion).
\end{defn}
\begin{thm}[Quasi-tree expansion]
  \label{thm:QTSBRP}
  Let $G$ be a connected signed ribbon graph. For any total order on $E(G)$, the signed \BRp{} is given by
  \begin{align}\label{eq:SBRpExp}
    R_{s}(G;x+1,y,z)=&(x^{-1/2}y^{1/2})^{\,e_{-}(G)}\sum_{Q\in\cQ_{G}}x^{e_{-}(\cD(Q)\cup\cI_{\text{n}}(Q))}y^{n(F_{\cD(Q)\cup\cI_{\text{n}}(Q)})-e_{-}(\cD(Q)\cup\cI_{\text{n}}(Q))}\nonumber\\ 
&z^{(k-f+n)(F_{\cD(Q)\cup\cI_{\text{n}}(Q)})} (1+x)^{e_{-}(\cE_{\text{o}}(Q))}(1+y)^{e_{+}(\cE_{\text{o}}(Q))}\nonumber\\
&(x^{1/2}y^{-1/2})^{r(G_{Q})+e_{-}(\cI_{\text{o}}(Q))}Ra(G_{Q};1,x^{-1/2}y^{1/2},x^{1/2}y^{1/2},x^{1/2}y^{1/2}z^{2})
  \end{align}
where, for all $E'\subseteq E(G)$, $E_{\pm}(E')\defi E_{\pm}(G)\cap
E'$, and $e_{\pm}(E')\defi |E_{\pm}(G)\cap E'|$.
\end{thm}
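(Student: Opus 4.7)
I would start from the signed spanning-subribbon-graph expansion \cref{eq:SignedBRpDef} and immediately apply the bijection of \cref{cor:SpannSubgBij}: every $F\subseteq G$ admits a unique decomposition
\[
E(F)=\cD(Q)\cup\cI_{\text{n}}(Q)\cup S_{i}\cup S_{e},\qquad Q=Q_{F}\in\cQ_{G},\ S_{i}\subseteq\cI_{\text{o}}(Q),\ S_{e}\subseteq\cE_{\text{o}}(Q).
\]
The sum $\sum_{F\subseteq G}$ then becomes a triple sum over $Q$, $S_{e}$ and $S_{i}$, and proving the theorem reduces to showing that, for each fixed $Q$, the inner double sum factorises into the three blocks of \cref{eq:SBRpExp}: a monomial depending only on $F_{\cD(Q)\cup\cI_{\text{n}}(Q)}$, a product of $(1+x)$- and $(1+y)$-factors indexed by $\cE_{\text{o}}(Q)$, and a rank polynomial of the ordinary graph $G_{Q}$.

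The second step is to compute the exponents $k(F)-k(G)+s(F)$, $n(F)-s(F)$ and $(k-f+n)(F)$ as $S_{i}$ and $S_{e}$ vary, using the ``base'' ribbon graph $F_{\cD(Q)\cup\cI_{\text{n}}(Q)}$ as the reference point. Two local principles would be used. First, an edge $e\in\cE_{\text{o}}(Q)$ is by construction an orientable loop in the one-vertex ribbon graph $G^{E(Q)}$ which does not link any lower-ordered edge; via \cref{prop:F'vsF,lem:Quasi-trees} this implies that toggling $e\in S_{e}$ leaves $k(F)$ and $(k-f+n)(F)$ unchanged and increases $n(F)$ by one, so that the sum over $S_{e}$ factorises into a product over $e\in\cE_{\text{o}}(Q)$ whose positive and negative pieces yield exactly the desired $(1+x)^{e_{-}(\cE_{\text{o}}(Q))}(1+y)^{e_{+}(\cE_{\text{o}}(Q))}$ factor, once the contribution of $s(F)$ is tracked. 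Second, for $e\in\cI_{\text{o}}(Q)$, toggling $e$ in $F$ corresponds, by \cref{def:GQ}, to toggling the associated edge in the ordinary graph $G_{Q}$; the behaviour of $k$, $n$ and---through \cref{prop:F'vsF} applied inside the one-vertex ribbon graph $G^{E(Q)}$---of $(k-f+n)$ is read off from the usual rank and nullity inside $G_{Q}$.

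The third step would match the sum over $S_{i}$ with the rank polynomial of $G_{Q}$ (\cref{def:RankPDef}): collecting the exponents from step two, using the homogeneity relation \cref{eq:HomogRkP} to absorb the mismatch into the prefactor $(x^{1/2}y^{-1/2})^{r(G_{Q})+e_{-}(\cI_{\text{o}}(Q))}$, and simplifying the sign-counting via $s(F)+s(\bar{F})=0$, would produce the stated specialization $Ra(G_{Q};1,x^{-1/2}y^{1/2},x^{1/2}y^{1/2},x^{1/2}y^{1/2}z^{2})$. The global prefactor $(x^{-1/2}y^{1/2})^{e_{-}(G)}$ finally emerges from one last tally of the negative edges distributed among the three blocks.

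The main obstacle I expect is the verification that $(k-f+n)(F)$---equivalently the Euler genus of $F$---depends on $S_{i}\cup S_{e}$ only through the matroid of the ordinary graph $G_{Q}$. This is a statement about how the Euler genus of a ribbon graph behaves under edge-toggling of orientable live loops in its partial dual, and it is exactly here that Chmutov's partial duality carries the argument: \cref{prop:F'vsF} lets one replace every face-count in $F$ by a vertex-count in a well-chosen partial dual, after which \cref{lem:NOLoopQT,lem:Quasi-trees} ensure that the non-orientable internally live loops in $\cI_{\text{n}}(Q)$ contribute to $(k-f+n)$ only through the base ribbon graph $F_{\cD(Q)\cup\cI_{\text{n}}(Q)}$. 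Without this dual viewpoint, handling a non-orientable $G$ directly on the surface side would blur the clean factorisation of \cref{eq:SBRpExp} and make the bookkeeping of signs considerably more delicate.
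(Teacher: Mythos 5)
Your plan is essentially the paper's own proof: the decomposition via \cref{cor:SpannSubgBij}, the computation of $k$, $n$ and $(k-f+n)$ of $F_{\cD(Q)\cup\cI_{\text{n}}(Q)\cup S}$ relative to the base $F_{\cD(Q)\cup\cI_{\text{n}}(Q)}$ (this is exactly \cref{lem:kf} and \cref{cor:nAndg}, proved there with \cref{prop:F'vsF} and the non-crossing of live orientable edges in $G^{E(Q)}$), and the factorisation of the $S_{e}$-sum and identification of the $S_{i}$-sum with $Ra(G_{Q};\cdot)$. The only slip is cosmetic: the sign bookkeeping rests on $s(F)=e_{-}(F)-\tfrac12 e_{-}(G)$ (from $e_{-}(F)+e_{-}(\bar F)=e_{-}(G)$), not on $s(F)+s(\bar F)=0$, which is how the prefactor $(x^{-1/2}y^{1/2})^{e_{-}(G)}$ actually arises.
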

\begin{cor}
  \label{cor:QTBRP}
  Let $G$ be a connected ribbon graph. For any total order on $E(G)$, the \BRp{} at $w=1$ is given by
  \begin{align}\label{eq:BRpExp}
    R(G;x,y,z,1)=&\sum_{Q\in\cQ_{G}}y^{n(F_{\cD(Q)\cup
        \cI_{\text{n}}(Q)})}z^{(k-f+n)(F_{\cD(Q)\cup
        \cI_{\text{n}}(Q)})}(1+y)^{|\cE_{\text{o}}(Q)|}T(G_{Q};x,yz^{2}+1)\nonumber
  \end{align}
  where $T(G_{Q})$ is the \Tp{} of $G_{Q}$.
\end{cor}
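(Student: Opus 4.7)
The plan is to derive \cref{cor:QTBRP} as a direct specialization of \cref{thm:QTSBRP}. Since the \BRp{} at $w=1$ coincides with $R_{s}$ applied to the all-positive signing of $G$ (the fact noted immediately after \cref{eq:SignedBRpDef}), I would start from \cref{eq:SBRpExp} specialised to $\veps_{G}\equiv+$ and then perform the change of variable $x\mapsto x-1$ to match the ordinary Bollob\'as--Riordan variable~$x$ used in the corollary.

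Under that specialisation the bookkeeping is mechanical: $e_{-}(\cdot)=0$ everywhere, so the global prefactor $(x^{-1/2}y^{1/2})^{e_{-}(G)}$, the factor $x^{e_{-}(\cD(Q)\cup\cI_{\text{n}}(Q))}$, the factor $(1+x)^{e_{-}(\cE_{\text{o}}(Q))}$ and the $e_{-}(\cI_{\text{o}}(Q))$ summand in the exponent of $(x^{1/2}y^{-1/2})$ all disappear, while $(1+y)^{e_{+}(\cE_{\text{o}}(Q))}$ becomes $(1+y)^{|\cE_{\text{o}}(Q)|}$ and the exponent of $y$ attached to $F_{\cD(Q)\cup\cI_{\text{n}}(Q)}$ reduces to $n(F_{\cD(Q)\cup\cI_{\text{n}}(Q)})$. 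After the substitution $x\mapsto x-1$, what is left to identify with $T(G_{Q};x,yz^{2}+1)$ is the product
\[
\bigl((x-1)^{1/2}y^{-1/2}\bigr)^{r(G_{Q})}\,Ra\bigl(G_{Q};1,(x-1)^{-1/2}y^{1/2},(x-1)^{1/2}y^{1/2},(x-1)^{1/2}y^{1/2}z^{2}\bigr).
\]

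The key calculation is then an elementary identity for the rank polynomial of an ordinary (unsigned) graph. Setting $u=(x-1)^{-1/2}y^{1/2}$, $v=(x-1)^{1/2}y^{1/2}$ and $w=(x-1)^{1/2}y^{1/2}z^{2}$, one has $v/u=x-1$ and $uw=yz^{2}$. Using $e(F)=r(F)+n(F)$ together with $r(F)=v(G_{Q})-k(F)$, I would factor $u^{r(G_{Q})}$ out of $Ra(G_{Q};1,u,v,w)=\sum_{F\subseteq G_{Q}}u^{e(F)}v^{k(F)-k(G_{Q})}w^{n(F)}$, obtaining
\[
u^{r(G_{Q})}\sum_{F\subseteq G_{Q}}(v/u)^{k(F)-k(G_{Q})}(uw)^{n(F)}=u^{r(G_{Q})}\,T(G_{Q};x,yz^{2}+1).
\]
This factor cancels exactly against $((x-1)^{1/2}y^{-1/2})^{r(G_{Q})}=u^{-r(G_{Q})}$, leaving precisely the Tutte polynomial announced by the corollary.

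The main potential pitfall is purely notational: making sure that every occurrence of $e_{-}(\cdot)$ in \cref{thm:QTSBRP} collapses consistently when $\veps_{G}\equiv+$, in particular that the global $(x^{-1/2}y^{1/2})^{e_{-}(G)}$ really reduces to $1$ and does not leave a residual prefactor to be absorbed later, and that the change of variable $x\mapsto x-1$ is applied uniformly to every occurrence of $x$, including the ones hidden inside the arguments of $Ra$. Once these points are verified, the argument amounts to the two-line manipulation above, which is precisely an instance of the homogeneity property \cref{eq:HomogRkP} of the rank polynomial.
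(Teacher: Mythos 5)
Your proposal is correct and follows exactly the route the paper intends: Corollary \ref{cor:QTBRP} is stated there as a direct specialization of Theorem \ref{thm:QTSBRP} to an all-positive signing (so that every $e_{-}(\cdot)$ vanishes), with the reduction of the rank-polynomial factor to $T(G_{Q};x,yz^{2}+1)$ left as an easy verification, which is precisely the computation you carry out. Your explicit factoring of $u^{r(G_{Q})}$ against $\bigl((x-1)^{1/2}y^{-1/2}\bigr)^{r(G_{Q})}$ is the verification the paper omits (it is really the standard rank-polynomial-to-Tutte specialization rather than an instance of \eqref{eq:HomogRkP}, but that mislabel does not affect the argument).
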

Before proving \cref{thm:QTSBRP}, let us comment on the fact that,
in \cref{cor:QTBRP}, we get a quasi-tree expansion only at
$w=1$. To extend our expansion to the full \BRp{} (namely for any
$w$), we would need in particular to relate the orientability of any subgraph to the
orientability of $F_{\cD\cup\cI_{\text{n}}}$. This has been done in \citep{Dewey2007aa}.\\

The proof of \cref{thm:QTSBRP} relies on the following lemma:
\begin{lemma}
  \label{lem:kf}
  Let $G$ be a connected ribbon graph. Let $Q\in\cQ_{G}$ be a quasi-tree in $G$. Given a total order on $E(G)$, and for any $S=S_{1}\cup S_{2}$ with $S_{1}\subset\cI_{\text{o}}(Q)$ and $S_{2}\subset\cE_{\text{o}}(Q)$, we have
  \begin{itemize}
  \item $k(F_{\cD(Q)\cup \cI_{\text{n}}(Q)\cup S})=k(F_{\cD(Q)\cup \cI_{\text{n}}(Q)\cup S_{1}})=k(W)$, where $W$ is the spanning subgraph of $G_{Q}$, the edge-set of which is $S_{1}$,
  \item $f(F_{\cD(Q)\cup \cI_{\text{n}}(Q)\cup S})=f(F_{\cD(Q)\cup \cI_{\text{n}}(Q)})-|S_{1}|+|S_{2}|$.
  \end{itemize}
\end{lemma}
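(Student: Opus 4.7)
My plan is to work entirely in the partial dual $G':=G^{E(Q)}$, which, because $Q$ is a quasi-tree, is a one-vertex ribbon graph. By definition, every edge of $\cI_{\text{o}}(Q)\cup\cE_{\text{o}}(Q)$ is an orientable loop in $G'$; and since the crossing relation is symmetric, the fact that all such edges are live with respect to $Q$ means no two of them cross each other in $G'$. Thus the ``active'' edges form a collection of pairwise non-crossing orientable loops attached to the single vertex of $G'$, and this is the combinatorial structure that will drive both parts of the proof.

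I would prove the second assertion first by applying \cref{prop:F'vsF} with $F=Q$ and $F'=F_{\cD(Q)\cup\cI_{\text{n}}(Q)\cup S}$. One computes $\Delta=(\cI_{\text{o}}(Q)\setminus S_{1})\cup S_{2}$, all of whose elements are pairwise non-crossing orientable loops in $G'$. The formula then reduces to the topological fact that contracting $n$ pairwise non-crossing orientable loops in a one-vertex ribbon graph yields a ribbon graph with $n+1$ vertices (proved by induction: \cref{def:Contraction} shows that contracting one such loop splits the single vertex into two, and the non-crossing condition guarantees that each remaining loop stays entirely at one of the two new vertices with its orientability and non-crossing properties preserved). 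This gives $f(F_{\cD\cup\cI_{\text{n}}\cup S})=|\cI_{\text{o}}(Q)|-|S_{1}|+|S_{2}|+1$, and specializing to $S=\emptyset$ yields $f(F_{\cD\cup\cI_{\text{n}}})=|\cI_{\text{o}}(Q)|+1$. Subtracting gives the claimed identity.

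For the first assertion, the equality $k(F_{\cD\cup\cI_{\text{n}}\cup S_{1}})=k(W)$ is immediate from the definition of $G_{Q}$, whose vertices are the connected components of $F_{\cD\cup\cI_{\text{n}}}$ and whose edge set (for the sub-object $W$) is $S_{1}$: joining pairs of vertices of $G_{Q}$ by the edges of $S_{1}$ is exactly the operation that merges the corresponding connected components of $F_{\cD\cup\cI_{\text{n}}}$. For the remaining equality $k(F_{\cD\cup\cI_{\text{n}}\cup S})=k(F_{\cD\cup\cI_{\text{n}}\cup S_{1}})$, my plan is to add the edges of $S_{2}$ to $F_{\cD\cup\cI_{\text{n}}\cup S_{1}}$ one at a time, invoking the formula from the second assertion at each step to conclude that the number of boundary components increases by exactly $1$ per edge added. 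Since adding a single edge to a ribbon graph can change the number of boundary components only by $+1$, $0$, or $-1$, and the value $+1$ arises only when the two endpoints of the added edge lie on the same boundary component of the current ribbon graph (with orientable attachment), both endpoints then necessarily belong to the same connected component, so $k$ is preserved.

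The main obstacle is formalizing the topological claim about contracting pairwise non-crossing orientable loops. Although intuitively clear from the planar-matching picture (non-crossing orientable loops correspond to non-crossing chords in a disk, and contracting one chord splits one region into two), carefully tracking the redistribution of arrows and boundary components across successive partial duals requires some bookkeeping; the earlier results \cref{lem:NOLoopQT,lem:Quasi-trees} suggest that this kind of reasoning is routine within the partial-duality framework adopted in the paper.
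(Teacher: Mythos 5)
Your proposal is correct and follows essentially the same route as the paper: both work in the one-vertex partial dual $G^{E(Q)}$, invoke \cref{prop:F'vsF}, and reduce everything to the fact that the live orientable edges are pairwise non-crossing orientable loops there, so that contracting $n$ of them yields $n+1$ vertices. The only (harmless) variation is in the first item, where you deduce the $k$-invariance under adding an edge of $S_{2}$ from the already-proved face formula (an added edge raises $f$ by one only if both of its ends lie on a single boundary component, hence a single connected component, of $F_{\cD(Q)\cup\cI_{\text{n}}(Q)\cup S_{1}}$), whereas the paper establishes directly, by a partial-dual computation, that such an edge meets only one boundary component.
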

\begin{proof}
  The edges in $S$ being orientable, the proof follows the one given in \citep{Champanerkar2007aa}. But we reformulate it in terms of S.~Chmutov's duality.

  Let $e\in S_{2}$. We want to prove that $k(F_{\cD(Q)\cup
    \cI_{\text{n}}(Q)\cup S})=k(F_{\cD(Q)\cup \cI_{\text{n}}(Q)\cup
    S\setminus\{e\}})$ that is to say that $e$ intersects only one
  component of $F_{\cD(Q)\cup \cI_{\text{n}}(Q)\cup
    S\setminus\{e\}}$. Clearly if $e$ intersects only one component of
  $F_{\cD(Q)\cup\cI_{\text{n}}(Q)\cup S_{1}}$, it does so a fortiori in $F_{\cD(Q)\cup \cI_{\text{n}}(Q)\cup S\setminus\{e\}}$. Then it is enough to prove that $k(F_{\cD(Q)\cup \cI_{\text{n}}(Q)\cup S_{1}\cup\{e\}})=k(F_{\cD(Q)\cup \cI_{\text{n}}(Q)\cup S_{1}})$. Actually we are going to prove an even stronger statement, namely that $e$ only intersects one boundary component of $F_{\cD(Q)\cup \cI_{\text{n}}(Q)\cup S_{1}}$. This would obviously imply the desired result.

 The boundary components of a ribbon graph are the vertices of its natural dual. We will therefore prove that $e$ is a loop in $(F_{\cD(Q)\cup \cI_{\text{n}}(Q)\cup S_{1}\cup\{e\}})^{\cD(Q)\cup \cI_{\text{n}}(Q)\cup S_{1}}$.
  \begin{align}
    (F_{\cD(Q)\cup \cI_{\text{n}}(Q)\cup S_{1}\cup\{e\}})^{\cD(Q)\cup
      \cI_{\text{n}}(Q)\cup
      S_{1}}=&G^{E(Q)}/\bar{S_{1}}-\big((\cE_{\text{o}}(Q)\setminus\{e\})\cup\cE_{\text
    n}(Q)\cup\kD(Q)\big)\\
  \ifed&G^{E(Q)}/\bar{S_{1}}-A
  \end{align}
  with $\bar{S_{1}}\defi\cI_{\text{o}}(Q)\setminus S_{1}$, $\kD(Q)$ the set of externally dead edges and where we
  used $E(Q)=\cD(Q)\cup\cI_{\text{n}}(Q)\cup \cI_{\text{o}}(Q)$ and
  \cref{def:Contraction}. $Q$ being a quasi-tree,
  $G^{E(Q)}-A$ is a one-vertex ribbon
  graph with edges in $\cD(Q)\cup \cI_{\text{n}}(Q)\cup
  \cI_{\text{o}}(Q)\cup\{e\}$. The edges in $\bar{S_{1}}\cup\{e\}$ are
  all unresolved in the partial resolution $\rho$ of $\cT(G)$ such
  that $Q_{\rho}=Q$. Therefore they do not cross each other in $G^{E(Q)}$; see the proof of \cref{lem:LiveDead}. As a consequence the edge $e$ is still a loop in $G^{E(Q)}/\bar{S_{1}}$ and the first equality of the first item follows.

  The proof that $k(F_{\cD(Q)\cup \cI_{\text{n}}(Q)\cup S_{1}})=k(W)$ is obvious from the \cref{def:GQ} of $G_{Q}$.

  Let us now prove the second statement of the lemma:
  \begin{align}
    f(F_{\cD(Q)\cup \cI_{\text{n}}(Q)\cup S})=&v(F_{\cD(Q)\cup \cI_{\text{n}}(Q)\cup S}^{\star})=v(G^{\cD(Q)\cup \cI_{\text{n}}(Q)\cup S})=v\big(G^{E(Q)}/(\bar{S_{1}}\cup S_{2})\big)\\
    f(F_{\cD(Q)\cup \cI_{\text{n}}(Q)})=&v\big(G^{E(Q)}/\cI_{\text{o}}(Q)\big)
    \intertext{But the edges in
      $\cI_{\text{o}}(Q)\cup\cE_{\text{o}}(Q)$ do not cross each other
      in $G^{E(Q)}$ (see the proof of \cref{lem:LiveDead}). Thus, given the \cref{def:Contraction} of the contraction of a loop, we have}
    f(F_{\cD(Q)\cup \cI_{\text{n}}(Q)\cup S})=&v(G^{E(Q)})+|\bar{S_{1}}|+|S_{2}|,\\
    f(F_{\cD(Q)\cup \cI_{\text{n}}(Q)})=&v(G^{E(Q)})+|S_{1}|+|\bar{S_{1}}|
  \end{align}
which implies $f(F_{\cD(Q)\cup \cI_{\text{n}}(Q)\cup S})=f(F_{\cD(Q)\cup \cI_{\text{n}}(Q)})-|S_{1}|+|S_{2}|$.
\end{proof}

\begin{cor}\label{cor:nAndg}
  Let $G$ be a connected ribbon graph. Let $Q\in\cQ_{G}$ be a quasi-tree in $G$. Given a total order on $E(G)$ and for any $S=S_{1}\cup S_{2}$ with $S_{1}\subset\cI_{\text{o}}(Q)$ and $S_{2}\subset\cE_{\text{o}}(Q)$, we have
  \begin{itemize}
  \item $n(F_{\cD(Q)\cup\cI_{\text{n}}(Q)\cup S})=n(F_{\cD(Q)\cup\cI_{\text{n}}(Q)})+n(W)+|S_{2}|$,
  \item $(k-f+n)(F_{\cD(Q)\cup\cI_{\text{n}}(Q)\cup S})=(k-f+n)(F_{\cD(Q)\cup\cI_{\text{n}}(Q)})+2n(W)$.
  \end{itemize}
\end{cor}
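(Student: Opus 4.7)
The plan is to deduce both identities directly from \cref{lem:kf} together with the formula $n(F)=e(F)-v(F)+k(F)$. Since every relevant subribbon graph is spanning, $v(F)=v(G)$ is constant, and the bookkeeping reduces to tracking the edge count and the number of components.

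First I would compute $n(F_{\cD(Q)\cup\cI_{\text{n}}(Q)\cup S})$. The edge count equals $|\cD(Q)|+|\cI_{\text{n}}(Q)|+|S_{1}|+|S_{2}|$ and, by \cref{lem:kf}, the number of components equals $k(W)$. Setting $S=\emptyset$ in \cref{lem:kf} shows that the components of $F_{\cD(Q)\cup\cI_{\text{n}}(Q)}$ are in bijection with the vertices of $G_{Q}$, so $k(F_{\cD(Q)\cup\cI_{\text{n}}(Q)})=v(G_{Q})$. Applying the same nullity formula to the graph $W$ (whose vertex set is $V(G_{Q})$ and whose edge set is $S_{1}$) gives
\begin{align}
n(W)=|S_{1}|-v(G_{Q})+k(W)=|S_{1}|-k(F_{\cD(Q)\cup\cI_{\text{n}}(Q)})+k(W),
\end{align}
so that $k(W)-k(F_{\cD(Q)\cup\cI_{\text{n}}(Q)})=n(W)-|S_{1}|$. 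Subtracting $n(F_{\cD(Q)\cup\cI_{\text{n}}(Q)})$ from $n(F_{\cD(Q)\cup\cI_{\text{n}}(Q)\cup S})$ and using that the edge count grows by $|S_{1}|+|S_{2}|$, the first identity $n(F_{\cD(Q)\cup\cI_{\text{n}}(Q)\cup S})=n(F_{\cD(Q)\cup\cI_{\text{n}}(Q)})+n(W)+|S_{2}|$ falls out.

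For the second identity I would simply assemble the three variations. From the first step, $\Delta k=k(W)-k(F_{\cD(Q)\cup\cI_{\text{n}}(Q)})=n(W)-|S_{1}|$; from the second item of \cref{lem:kf}, $\Delta f=-|S_{1}|+|S_{2}|$; and from the first item just proven, $\Delta n=n(W)+|S_{2}|$. Summing with signs yields $\Delta(k-f+n)=\bigl(n(W)-|S_{1}|\bigr)-\bigl(-|S_{1}|+|S_{2}|\bigr)+\bigl(n(W)+|S_{2}|\bigr)=2n(W)$, which is the second claim.

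There is no real obstacle here: once \cref{lem:kf} is available, the corollary is pure bookkeeping with the definitions of $r$, $n$ and the invariance of $v$ under the passage to spanning subribbon graphs. The only point requiring a moment of care is identifying $k(F_{\cD(Q)\cup\cI_{\text{n}}(Q)})$ with $v(G_{Q})$, which is immediate from \cref{def:GQ}.
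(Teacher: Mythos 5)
Your proposal is correct and takes essentially the same route as the paper: both arguments are pure bookkeeping from \cref{lem:kf} together with $n=e-v+k$, the constancy of $v$ on spanning subribbon graphs, and the identification $v(G_{Q})=k(F_{\cD(Q)\cup\cI_{\text{n}}(Q)})$ coming from \cref{def:GQ}. Your packaging of the computation as the three variations $\Delta k=n(W)-|S_{1}|$, $\Delta f=-|S_{1}|+|S_{2}|$, $\Delta n=n(W)+|S_{2}|$ is just a slightly tidier presentation of the same calculation.
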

\begin{proof}
  Using now \cref{lem:kf}, we have
   \begin{align}
          n(F_{\cD(Q)\cup\cI_{\text{n}}(Q)\cup S})=&(e-v+k)(F_{\cD(Q)\cup\cI_{\text{n}}(Q)\cup S})\\
     =&e(F_{\cD(Q)\cup\cI_{\text{n}}(Q)\cup S_{1}})+|S_{2}|-v(G)+k(F_{\cD(Q)\cup\cI_{\text{n}}(Q)\cup S_{1}})\\
     =&n(F_{\cD(Q)\cup\cI_{\text{n}}(Q)\cup S_{1}})+|S_{2}|,\label{eq:n1}\\
     \nonumber\\
     n(W)=&e(W)-v(W)+k(W)\\
     =&|S_{1}|-k(F_{\cD(Q)\cup\cI_{\text{n}}(Q)})+k(F_{\cD(Q)\cup\cI_{\text{n}}(Q)\cup S_{1}}),\\
     \nonumber\\
     n(F_{\cD(Q)\cup\cI_{\text{n}}(Q)\cup S_{1}})=&e(F_{\cD(Q)\cup\cI_{\text{n}}(Q)})+|S_{1}|-v(G)+k(F_{\cD(Q)\cup\cI_{\text{n}}(Q)\cup S_{1}})\\
     =&e(F_{\cD(Q)\cup\cI_{\text{n}}(Q)})-v(G)+k(F_{\cD(Q)\cup\cI_{\text{n}}(Q)})\nonumber\\
     &+|S_{1}|-k(F_{\cD(Q)\cup\cI_{\text{n}}(Q)})+k(F_{\cD(Q)\cup\cI_{\text{n}}(Q)\cup S_{1}})\\
     =&n(F_{\cD(Q)\cup\cI_{\text{n}}(Q)})+n(W)\label{eq:n2}
     \intertext{Equations (\cref{eq:n1}) and (\cref{eq:n2}) imply $n(F_{\cD(Q)\cup\cI_{\text{n}}(Q)\cup S})=n(F_{\cD(Q)\cup\cI_{\text{n}}(Q)})+n(W)+|S_{2}|$.}
     (k-f+n)(F_{\cD(Q)\cup\cI_{\text{n}}(Q)\cup S})=&k(F_{\cD(Q)\cup\cI_{\text{n}}(Q)\cup S_{1}})-f(F_{\cD(Q)\cup\cI_{\text{n}}(Q)})+|S_{1}|-|S_{2}|\nonumber\\
     &+n(F_{\cD(Q)\cup\cI_{\text{n}}(Q)})+n(W)+|S_{2}|\\
     =&(k-f+n)(F_{\cD(Q)\cup\cI_{\text{n}}(Q)})+k(F_{\cD(Q)\cup\cI_{\text{n}}(Q)\cup S_{1}})\nonumber\\
     &-k(F_{\cD(Q)\cup\cI_{\text{n}}(Q)})+|S_{1}|+n(W)\\
     =&(k-f+n)(F_{\cD(Q)\cup\cI_{\text{n}}(Q)})+2n(W)
    \end{align}
    which proves \cref{cor:nAndg}.
\end{proof}

\begin{proof}[of \cref{thm:QTSBRP}]
  Thanks to \cref{cor:SpannSubgBij}, the signed \BRp{} can be written as follows
     \begin{align}
R_{s}(G;x+1,y,z)=&(x^{-1/2}y^{1/2})^{e_{-}(G)}\sum_{Q\in\cQ_{G}}\sum_{S_{1}\subset\cI_{\text{o}}(Q)}\sum_{S_{2}\subset\cE_{\text{o}}(Q)}x^{k(F_{\cD(Q)\cup \cI_{\text{n}}(Q)\cup S})-k(G)+e_{-}(\cD(Q)\cup \cI_{\text{n}}(Q)\cup S)}\nonumber\\
        &\qquad y^{n(F_{\cD(Q)\cup \cI_{\text{n}}(Q)\cup S})-e_{-}(\cD(Q)\cup \cI_{\text{n}}(Q) \cup S)}z^{(k-f+n)(F_{\cD(Q)\cup \cI_{\text{n}}(Q)\cup S})}
      \end{align}
  where $S=S_{1}\cup S_{2}$. Using now \cref{lem:kf} and \cref{cor:nAndg}, we have
   \begin{align}
     R_{s}(G;x+1,y,z)=&(x^{-1/2}y^{1/2})^{e_{-}(G)}\sum_{Q\in\cQ_{G}}x^{e_{-}(\cD(Q)\cup\cI_{\text{n}}(Q))}y^{n(F_{\cD(Q)\cup\cI_{\text{n}}(Q)})-e_{-}(\cD(Q)\cup\cI_{\text{n}}(Q))}\nonumber\\
     &\hspace{-2.7cm}z^{(k-f+n)(F_{\cD(Q)\cup\cI_{\text{n}}(Q)})}\sum_{\mathclap{S_{2}\subseteq\cE_{\text
         o}(Q)}}x^{e_{-}(S_{2})}y^{e_{+}(S_{2})}\sum_{W\subseteq G_{Q}}x^{k(W)-k(G_{Q})+e_{-}(W)}(yz^{2})^{n(W)}y^{-e_{-}(W)}\\
     =&(x^{-1/2}y^{1/2})^{e_{-}(G)}\sum_{Q\in\cQ_{G}}x^{e_{-}(\cD(Q)\cup\cI_{\text{n}}(Q))}y^{n(F_{\cD(Q)\cup\cI_{\text{n}}(Q)})-e_{-}(\cD(Q)\cup\cI_{\text{n}}(Q))}\nonumber\\
     &z^{(k-f+n)(F_{\cD(Q)\cup\cI_{\text{n}}(Q)})} (1+x)^{e_{-}(\cE_{\text{o}}(Q))}(1+y)^{e_{+}(\cE_{\text{o}}(Q))}\nonumber\\
     &\sum_{W\subseteq G_{Q}}x^{k(W)-k(G_{Q})+e_{-}(W)}(yz^{2})^{n(W)}y^{-e_{-}(W)}
   \end{align}
   where we used $k(G)=k(G_{Q})=1$. To conclude, it remains to prove that
   \begin{align}
     &\sum_{W\subseteq G_{Q}}x^{k(W)-k(G_{Q})+e_{-}(W)}(yz^{2})^{n(W)}y^{-e_{-}(W)}\nonumber\\
     =&(x^{1/2}y^{-1/2})^{r(G_{Q})+e_{-}(\cI_{\text{o}}(Q))}Ra(G_{Q};1,x^{-1/2}y^{1/2},x^{1/2}y^{1/2},x^{1/2}y^{1/2}z^{2})
   \end{align}
   which is easily checked from \cref{def:RankPDef} of the rank polynomial.
\end{proof}
\cref{cor:QTBRP} is a direct consequence of theorem
\cref{thm:QTSBRP}. It is indeed easily verified that, if $G$ is a
signed ribbon graph with only positive edges, the right hand side of (\cref{eq:SBRpExp}) reduces to the desired expression of \cref{cor:QTBRP}.

%\newpage
\subsection{The multivariate \BRp}
\label{sec:multivariate-brp}

Multivariate versions of (ribbon) graph polynomials consist in attaching a different indeterminate to each edge. The multivariate \BRp{} is defined as follows \citep{Moffatt2008ab}: let $G$ be a ribbon graph,
\begin{align}
  \label{eq:MultBRDef}
  Z(G;q,\mathbf{\beta},c)\defi&\sum_{F\subseteq G}q^{k(F)}\Big(\prod_{e\in E(F)}\beta_{e}\Big)c^{f(F)}
\end{align}
where $\mathbf{\beta}=\{\beta_{e}\tqs e\in E(G)\}$. Let $G$ be a graph; the multivariate Tutte polynomial is defined as \citep{Traldi1989aa}
\begin{align}
  \label{eq:MultiTutteDef}
  Z_{T}(G;q,\mathbf{\beta})\defi&\sum_{F\subseteq G}q^{k(F)}\Big(\prod_{e\in E(F)}\beta_{e}\Big).
\end{align}
\begin{lemma}
  \label{lem:MultiBR-QT}
  Let $G$ be a connected ribbon graph. For any total order on $E(G)$, the multivariate \BRp{} $Z$ is given by
  \begin{align}
%     \label{eq:MultiBR-QT}
    Z(G;q,\mathbf{\beta},c)=&\sum_{Q\in\cQ_{G}}\Big(\prod_{e\in\cD(Q)\cup\cI_{\text{n}}(Q)}\beta_{e}\Big)c^{f(F_{\cD(Q)\cup\cI_{\text{n}}(Q)})}\Big(\prod_{e\in\cE_{\text{o}}(Q)}(1+c\beta_{e})\Big)Z_{T}(G_{Q};q,\mathbf{\beta}/c).\nonumber
  \end{align}
\end{lemma}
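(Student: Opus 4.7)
The plan is to mimic, in a lighter key, the proof of \cref{thm:QTSBRP}: use \cref{cor:SpannSubgBij} to reorganise the spanning sub\-ribbon graph sum into a sum indexed by quasi-trees, then use \cref{lem:kf} to rewrite $k(F)$ and $f(F)$ so that the remaining summation factorises into an $\cE_{\text{o}}(Q)$-part producing the $(1+c\beta_{e})$ factors and an $\cI_{\text{o}}(Q)$-part yielding the multivariate Tutte polynomial of $G_{Q}$.

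First, by \cref{cor:SpannSubgBij}, every spanning subribbon graph $F$ of $G$ is uniquely written, for a fixed total order on $E(G)$, as $F=F_{\cD(Q)\cup\cI_{\text{n}}(Q)\cup S_{1}\cup S_{2}}$ with $Q\in\cQ_{G}$, $S_{1}\subseteq\cI_{\text{o}}(Q)$ and $S_{2}\subseteq\cE_{\text{o}}(Q)$. Letting $W$ denote the spanning subgraph of $G_{Q}$ with edge-set $S_{1}$, \cref{lem:kf} gives $k(F)=k(W)$ and $f(F)=f(F_{\cD(Q)\cup\cI_{\text{n}}(Q)})-|S_{1}|+|S_{2}|$. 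Hence
\begin{align*}
Z(G;q,\mathbf{\beta},c)
=&\sum_{Q\in\cQ_{G}}\sum_{S_{1}\subseteq\cI_{\text{o}}(Q)}\sum_{S_{2}\subseteq\cE_{\text{o}}(Q)}
q^{k(W)}\Big(\prod_{e\in\cD(Q)\cup\cI_{\text{n}}(Q)}\beta_{e}\Big)\Big(\prod_{e\in S_{1}}\beta_{e}\Big)\Big(\prod_{e\in S_{2}}\beta_{e}\Big)\\
&\qquad\qquad\cdot c^{f(F_{\cD(Q)\cup\cI_{\text{n}}(Q)})-|S_{1}|+|S_{2}|}.
\end{align*}

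The factors that do not depend on $S_{1},S_{2}$ pull out of the inner sums. The sum over $S_{2}$ then factors edge-wise:
\begin{align*}
\sum_{S_{2}\subseteq\cE_{\text{o}}(Q)}\prod_{e\in S_{2}}(c\beta_{e})=\prod_{e\in\cE_{\text{o}}(Q)}(1+c\beta_{e}).
\end{align*}
For the $S_{1}$ sum, note that as $S_{1}$ runs over subsets of $\cI_{\text{o}}(Q)=E(G_{Q})$, $W$ runs over all spanning subgraphs of $G_{Q}$, and $\prod_{e\in S_{1}}\beta_{e}\,c^{-|S_{1}|}=\prod_{e\in E(W)}(\beta_{e}/c)$, so that
\begin{align*}
\sum_{S_{1}\subseteq\cI_{\text{o}}(Q)}q^{k(W)}c^{-|S_{1}|}\prod_{e\in S_{1}}\beta_{e}
=\sum_{W\subseteq G_{Q}}q^{k(W)}\prod_{e\in E(W)}(\beta_{e}/c)
=Z_{T}(G_{Q};q,\mathbf{\beta}/c),
\end{align*}
by the definition (\cref{eq:MultiTutteDef}) of the multivariate Tutte polynomial. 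Collecting the three pieces yields exactly the right-hand side of the claimed identity.

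There is no real obstacle here: the combinatorial work has all been done in \cref{cor:SpannSubgBij} and \cref{lem:kf}. The only subtle point to double-check is the $c$-bookkeeping — the exponent $f(F_{\cD(Q)\cup\cI_{\text{n}}(Q)})-|S_{1}|+|S_{2}|$ must split cleanly so that the $+|S_{2}|$ combines with $\prod_{S_{2}}\beta_{e}$ to produce $1+c\beta_{e}$ factors and the $-|S_{1}|$ combines with $\prod_{S_{1}}\beta_{e}$ to produce the rescaled edge weights $\beta_{e}/c$ of $Z_{T}(G_{Q};q,\mathbf{\beta}/c)$. Once this is verified, the identity follows directly, with no need to invoke $s(F)$, orientability weights or the rank polynomial.
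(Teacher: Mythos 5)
Your proposal is correct and follows essentially the same route as the paper's own proof: both invoke \cref{cor:SpannSubgBij} to reindex the spanning subribbon graph sum by quasi-trees and subsets $S_{1}\subseteq\cI_{\text{o}}(Q)$, $S_{2}\subseteq\cE_{\text{o}}(Q)$, then apply \cref{lem:kf} so that the $S_{2}$-sum factorises into $\prod_{e\in\cE_{\text{o}}(Q)}(1+c\beta_{e})$ and the $S_{1}$-sum becomes $Z_{T}(G_{Q};q,\mathbf{\beta}/c)$. Your $c$-bookkeeping check matches the paper's computation exactly, so nothing further is needed.
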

\begin{proof}
    Thanks to \cref{cor:SpannSubgBij}, the multivariate \BRp{} can be written as follows
    \begin{multline}
      Z(G;q,\mathbf{\beta},c)=\sum_{Q\in\cQ_{G}}\sum_{S_{1}\subset\cI_{\text{o}}(Q)}
      \sum_{S_{2}\subset\cE_{\text{o}}(Q)}q^{k(F_{\cD(Q)\cup \cI_{\text{n}}(Q)\cup S})}
      \Big(\prod_{e\in \cD(Q)\cup \cI_{\text{n}}(Q)\cup S}\beta_{e}\Big)c^{f(F_{\cD(Q)\cup \cI_{\text{n}}(Q)\cup S})}
    \end{multline}
   where $S=S_{1}\cup S_{2}$. Using now \cref{lem:kf}, we have
   \begin{align}
     Z(G;q,\mathbf{\beta},c)=&\sum_{Q\in\cQ_{G}}\Big(\prod_{e\in \cD(Q)\cup \cI_{\text{n}}(Q)\cup S}\beta_{e}\Big)c^{f(F_{\cD(Q)\cup \cI_{\text{n}}(Q)})}\sum_{S_{2}}\Big(\prod_{e\in S_{2}}c\beta_{e}\Big)\nonumber\\
     &\times\sum_{S_{1}}q^{k(F_{\cD(Q)\cup \cI_{\text{n}}(Q)\cup S_{1}})}\Big(\prod_{e\in S_{1}}\beta_{e}/c\Big)\\
     =&\sum_{Q\in\cQ_{G}}\Big(\prod_{e\in \cD(Q)\cup \cI_{\text{n}}(Q)\cup S}\beta_{e}\Big)c^{f(F_{\cD(Q)\cup \cI_{\text{n}}(Q)})}\Big(\prod_{e\in\cE_{\text{o}}(Q)}(1+c\beta_{e})\Big)\nonumber\\
     &\times\sum_{W\subseteq G_{Q}}q^{k(W)}\Big(\prod_{e\in E(W)}\beta_{e}/c\Big)
   \end{align}
and the lemma follows.
\end{proof}

In \citep{Vignestourneret2008aa} a multivariate extension of this signed polynomial has been defined and studied. Its invariance under the partial duality has also been proven in \citep{Vignestourneret2008aa}.
The multivariate signed \BRp{} is defined as follows:
\begin{align}
  \label{eq:MutliSignBRDef}
  Z_{s}(G;q,\mathbf{\alpha},c)\defi&\sum_{F\subseteq G}q^{k(F)+s(F)}\Big(\prod_{\substack{e\in E_{+}(F)\\\cup E_{-}(\bar{F})}}\alpha_{e}\Big)c^{f(F)}.
\end{align}
It is a multivariate generalization of $R_{s}$. Indeed if for any $e\in E(G),\,\alpha_{e}=yz$ and if we let $\mathbf{yz}$ be the corresponding set, we have
\begin{align}
  R_{s}(G;x+1,y,z)=&x^{-k(G)}(yz)^{-v(G)}Z_{s}(G;xyz^{2},\mathbf{yz},z^{-1}).\label{eq:GenerRs}
\end{align}
The multivariate polynomial $Z_{s}$ is actually related to the (unsigned) multivariate \BRp{} by
\begin{align}
  Z_{s}(G;q,\balpha,c)=&\Big(\prod_{e\in E_{-}(G)}q^{-1/2}\alpha_{e}\Big)Z(G;q,\mathbf{\beta},c)\label{eq:ZZhat}\\
  \text{with }\beta_{e}=&%
  \begin{cases}
    \alpha_{e}&\text{if $e$ is positive,}\\
    q\alpha_{e}^{-1}&\text{if $e$ is negative.}
  \end{cases}\label{eq:Beta}
\end{align}
It is then an easy exercise to get a quasi-tree expansion for the
signed multivariate \BRp{} from \cref{lem:MultiBR-QT}.

\section{Duality properties}
\label{sec:duality-properties}

In this section, we first recover the duality property of the \BRp{}, namely its invariance at $q=xyz^{2}=1$ \citep{Chmutov2007aa,Vignestourneret2008aa}, but via its quasi-tree expansion. As a consequence, we get another expression for the \BRp{} at $q=1$.\\

In \citep{Chmutov2007aa,Vignestourneret2008aa}, it has been proven
that, for any signed ribbon graph $G$ and any subset $E'\subseteq E(G)$ of edges,
\begin{subequations}
  \begin{align}
    Z_{s}(G;1,\balpha,c)=&Z_{s}(G^{E'};1,\balpha,c),\label{eq:DualityTransfoZ}\\
    \text{where
    }Z_{s}(G;xyz^{2},\mathbf{yz},z^{-1})\defi&x^{k(G)}(yz)^{v(G)}R_{s}(G;x+1,y,z).\label{eq:CorrespZRs}
  \end{align}
\end{subequations}
To prove equation (\cref{eq:DualityTransfoZ}), S.~Chmutov first exhibited a bijection between the subribbon graphs of $G$ and those of $G^{E'}$. Let us write $\cS_{G}$ for the set of spanning subribbon graphs of $G$. The bijection is the following map:
\begin{equation}
\begin{split}
  \varphi:\cS_{G}\to&\cS_{G^{E'}}\\
  F\mapsto&F'\text{ s.t. }E(F')=E'\Delta E(F),
\end{split}\label{eq:Bijection}
\end{equation}
where $\Delta$ stands for the symmetric difference. Then, defining
\begin{align}
  \label{eq:Monomials}
  Z_{s}(G;q,\balpha,c)\fide&\sum_{F\in\cS_{G}}M_{G}(F;q,\balpha,c),
\end{align}
he proved that $M_{G}(F;1,\alpha,c)=M_{G^{E'}}(\varphi(F);1,\alpha,c)$.\\

The quasi-tree expansion (\cref{eq:SBRpExp}) (or \cref{lem:MultiBR-QT}) is a way to factorize some of the monomials $M_{G}(F)$, naturally associated with a single quasi-tree $Q$ of $G$. Defining
\begin{align}
  \label{eq:MonomialsQT}
  Z_{s}(G;q,\balpha,c)\fide&\sum_{Q\in\cQ_{G}}N_{G}(Q;q,\balpha,c),
\end{align}
each monomial $N_{G}(Q)$ is the sum of several $M_{G}(F)$s. In the following, we prove that the bijection (\cref{eq:Bijection}) also preserves the $N_{G}(Q)$s:
\begin{lemma}
  \label{lem:NQ}
  For any signed ribbon graph $G$ and any subset $E'\subseteq E(G)$,\\
  $N_{G}(Q;1,\balpha,c)=N_{G^{E'}}(\varphi(Q);1,\balpha,c)$.
\end{lemma}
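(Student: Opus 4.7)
The plan is to leverage the monomial-wise duality identity $M_G(F;1,\balpha,c)=M_{G^{E'}}(\varphi(F);1,\balpha,c)$ from \citep{Chmutov2007aa,Vignestourneret2008aa} by showing that the bijection $\varphi$ restricts to a bijection between the packet of $Q$ in $\cS_G$ and the packet of $\varphi(Q)$ in $\cS_{G^{E'}}$. Once this is established, summing the monomial identity over the packet of $Q$ immediately gives $N_G(Q;1,\balpha,c)=N_{G^{E'}}(\varphi(Q);1,\balpha,c)$.

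The linchpin is the identity $(G^{E'})^{E(\varphi(Q))}=G^{E(Q)}$, which follows from $E(\varphi(Q))=E'\Delta E(Q)$ and the composition rule in \cref{lem:SimpleProp}. From this I would extract two facts. First, $\varphi(Q)$ is itself a quasi-tree, since $f(\varphi(Q))=v((G^{E'})^{E(\varphi(Q))})=v(G^{E(Q)})=f(Q)=1$ by \cref{eq:FacesVertexPartial}. Second, because the one-vertex ``chord diagram'' ribbon graph $G^{E(Q)}$ literally coincides for both $(G,Q)$ and $(G^{E'},\varphi(Q))$, the crossing relation of \cref{def:Cross}, the linking relation of \cref{def:LinkingEdges}, and the orientability of each edge as a loop in $G^{E(Q)}$ are all preserved. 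Using the order on $E(G^{E'})$ induced by the canonical bijection $\phi\colon E(G)\to E(G^{E'})$, the live/dead and orientable/non-orientable loop classifications therefore agree term by term, with only the internal/external dichotomy swapping along $E'$. In particular, $\cI_{\text{o}}(Q)\cup\cE_{\text{o}}(Q)$ and its analogue for $\varphi(Q)$ coincide as subsets of $E(G)$ identified with $E(G^{E'})$.

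Next I would apply \cref{cor:SpannSubgBij}: membership of $F$ in the packet of $Q$ is equivalent to $E(F)\cap A=E(Q)\cap A$, where $A\defi E(G)\setminus(\cI_{\text{o}}(Q)\cup\cE_{\text{o}}(Q))$ is the set of ``frozen'' (dead or non-orientable-live) edges. By the previous step, the corresponding set for $(G^{E'},\varphi(Q))$ is the same $A$. A single symmetric-difference calculation then does the work: for any $F$ in the packet of $Q$,
$$E(\varphi(F))\cap A=(E'\Delta E(F))\cap A=(E'\cap A)\Delta(E(F)\cap A)=(E'\cap A)\Delta(E(Q)\cap A)=E(\varphi(Q))\cap A,$$
so $\varphi(F)$ lies in the packet of $\varphi(Q)$ in $\cS_{G^{E'}}$. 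Running the same argument with $G^{E'}$ in place of $G$, and using $\varphi\circ\varphi=\mathrm{id}$, gives the reverse inclusion and hence the desired bijection of packets.

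The hardest part will be carefully verifying the invariance claims of the second paragraph, namely that linking, loop-orientability, and liveness genuinely transfer under the correspondence $(G,Q)\leftrightarrow(G^{E'},\varphi(Q))$ with the induced ordering. The proofs of \cref{lem:NOLoopQT,lem:Quasi-trees,lem:LiveDead} show how delicately the binary-tree construction depends on these combinatorial data; but everything is anchored on the single identity $(G^{E'})^{E(\varphi(Q))}=G^{E(Q)}$, after which the remainder is routine symmetric-difference bookkeeping.
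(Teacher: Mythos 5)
Your proposal is correct, but it proves the lemma by a genuinely different route than the paper. You take Chmutov's monomial-level identity $M_{G}(F;1,\balpha,c)=M_{G^{E'}}(\varphi(F);1,\balpha,c)$ as given, show that the bijection (\cref{eq:Bijection}) maps the packet of $Q$ onto the packet of $\varphi(Q)$, and sum; the packet-transfer step is sound, since by \cref{lem:LiveDead} the resolved edges of the leaf defining the packet of $Q$ are exactly $A=\cD(Q)\cup\cI_{\text{n}}(Q)\cup\kD(Q)\cup\cE_{\text{n}}(Q)$ (so the packet is precisely $\lb F\tqs E(F)\cap A=E(Q)\cap A\rb$, which is the clean justification of the equivalence you attribute to \cref{cor:SpannSubgBij}), the identity $(G^{E'})^{E'\Delta E(Q)}=G^{E(Q)}$ shows the frozen set of $\varphi(Q)$ in $G^{E'}$ is the same $A$, and your symmetric-difference computation then closes the argument. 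The paper instead proceeds by brute force: it performs the $q=1$ summation explicitly to get the closed form (\cref{eq:NQq1}) for $N_{G}(Q;1,\balpha,c)$, tracks how the activity sets and the edge signs transform under partial duality (the sign flip on $E'$ is what produces the $E_{\pm}\leftrightarrow E_{\mp}$ exchanges), relates $f(F_{\cD\cup\cI_{\text{n}}(\varphi(Q))})$ to $f(F_{\cD\cup\cI_{\text{n}}(Q)})$, and matches the exponents $D_{\alpha},d_{c},D_{1},D_{2}$. Your argument is shorter and more conceptual, and it completely sidesteps the sign bookkeeping (it is absorbed into the cited monomial identity); what it gives up is self-containedness and output: since you invoke the global $q=1$ duality at the level of the $M_{G}(F)$'s, you do not independently ``recover'' that duality from the quasi-tree expansion (part of the paper's motivation for this section), and you do not produce the explicit evaluation (\cref{eq:NQq1}), which the paper reuses for \cref{lem:q1Exp} and the connected-state expansion of the \Kb. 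Also note that the ``invariance claims'' you flag as the hardest part are exactly the observations the paper makes and uses (same live/dead and orientable/non-orientable classification, only internal/external swapping along $E'$), so that portion of your argument coincides with the paper's; the genuine divergence is in replacing the explicit evaluation and exponent matching by packet-wise summation of the known monomial identity.
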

In the following, if $P$ is a rational function in one variable, and for all $A\subseteq E(G)$, we abbreviate
$\prod_{e\in A}P(\alpha_{e})$ as $\mathbf{P(\alpha)^{A}}$.
\begin{proof}
  First, note that, from \cref{lem:MultiBR-QT} and equation (\cref{eq:ZZhat}),
  \begin{align}
    \label{eq:NQ-Expr}
    N_{G}(Q;q,\balpha,c)\ifed&(\alpha/\sqrt q)^{E_{-}(G)}\alpha^{E_{+}(\cD\cup\cI_{\text{n}}(Q))}(q/\alpha)^{E_{-}(\cD\cup\cI_{\text{n}}(Q))}c^{f(F_{\cD\cup\cI_{\text{n}}(Q)})}\nonumber\\
    &\quad (1+\alpha c)^{E_{+}(\cE_{\text{o}})}(1+qc/\alpha)^{E_{-}(\cE_{\text{o}})}\,Z_{R}(G_{Q};q,\bbeta/c),\\
    \intertext{where $\bbeta$ is given by equation (\cref{eq:Beta}), so that}
    Z_{R}(G_{Q};q,\bbeta/c)\defi&\sum_{F\subseteq G_{Q}}q^{k(F)+e_{-}(F)}(\alpha/c)^{E_{+}(F)}(\alpha c)^{-E_{-}(F)}.
  \end{align}
For $q=1$, we can explicitly perform the summation over the spanning subgraphs of $G_{Q}$ to get
\begin{align}
  \label{eq:NQq1}
  N_{G}(Q;1,\balpha,c)=&\alpha^{E_{-}(G)+E_{+}(\cD\cup\cI_{\text{n}}(Q))-E_{-}(\cD\cup\cI_{\text{n}}(Q))-E_{-}(\cI_{\text{o}}\cup\cE_{\text{o}}(Q))}c^{f(F_{\cD\cup\cI_{\text{n}}(Q)})-|\cI_{\text{o}}(Q)|}\nonumber\\
  &\quad (1+\alpha c)^{E_{+}(\cE_{\text{o}}(Q))+E_{-}(\cI_{\text{o}}(Q))}(\alpha+c)^{E_{-}(\cE_{\text{o}}(Q))+E_{+}(\cI_{\text{o}}(Q))}.
\end{align}

To prove the lemma, let us first prove that the bijection $\varphi$ conserves the number of faces:
\begin{align}
  f(F')=&v(F'^{\star})=v((G^{E'})^{E(F')})=v((G^{E'})^{E'\Delta E(F)})=v(G^{E(F)})=f(F).
\end{align}
This implies that, if $Q$ is a quasi-tree of $G$, then $\varphi(Q)$ is
a quasi-tree of $G^{E'}$. Moreover, as defined in section
\cref{sec:activities-wrt-quasi}, an edge $e\in E(G)$ is live
(resp. orientable) with
respect to $Q$ if it does not cross any lower-ordered edge (resp. if
it is an orientable loop) in
$G^{E(Q)}$. Then, an edge $e\in E(G^{E'})$ is live (resp. orientable) with respect to
$\varphi(Q)$ if it does not cross any lower-ordered edge (resp. if it
is an orientable loop) in
$(G^{E'})^{E(\varphi(Q))}=(G^{E'})^{E'\Delta E(Q)}=G^{E(Q)}$. Thus,
the sets of orientable (resp.\@ non-orientable) live (and dead) edges
with respect to $Q$ in $G$ and with respect to $\varphi(Q)$ in
$G^{E'}$ are the same. Nevertheless, as $E(Q)$ and
$E(\varphi(Q))=E'\Delta E(Q)$ are
different, some internal edges with respect to $Q$ may be external
with respect to $\varphi(Q)$, and vice versa. For example, the (internal) edges of
$G^{E'}$ with respect to $F_{\varphi(Q)}$ (i.e. $\varphi(Q)$) contain
both internal edges (the ones in
$E(Q)\setminus E'$) and external edges (the ones in $E'\setminus
E(Q)$) of $G$ with respect to $Q$. In other words, having
\begin{subequations}
  \begin{align}
    E(G)=&(\cD\cup\cI_{\text{o}}\cup\cI_{\text{n}})(Q)\cup(\kD\cup\cE_{\text{o}}\cup\cE_{\text{n}})(Q),\\
    E(Q)=&(\cD\cup\cI_{\text{o}}\cup\cI_{\text{n}})(Q)
  \end{align}
\end{subequations}
where $\kD(Q)$ is the set of externally dead edges, we have
\begin{subequations}
  \begin{align}
    (\cD\cup\kD)(Q)=&(\cD\cup\kD)(\varphi(Q)),\\
    (\cI_{\text o}\cup\cE_{\text o})(Q)=&(\cI_{\text o}\cup\cE_{\text
      o})(\varphi(Q)),\\
     (\cI_{\text n}\cup\cE_{\text n})(Q)=&(\cI_{\text n}\cup\cE_{\text
       n})(\varphi(Q)).
  \end{align}
\end{subequations}
And more precisely,
\begin{align}
  (\cD\cup\cI_{\text{n}})(\varphi(Q))=&\big
  [(\cD\cup\cI_{\text{n}})(Q)\setminus
  E'\big]\cup\big[(\kD\cup\cE_{\text{n}})(Q)\cap E'\big].\label{eq:DInphiQ}\\
  \intertext{Also remember that if $G$ is a signed ribbon graph, and
    $E'\subseteq E(G)$, for all $e\in E'$, the sign of $e$ in $G^{E'}$
    is opposite to the sign of $e$ in $G$; see section \cref{sec:partial-duality}. Thus}
  E_{\pm}\big[(\cD\cup\cI_{\text{n}})(\varphi(Q))\big]=&E_{\pm}\big[(\cD\cup\cI_{\text{n}})(Q)\setminus E'\big]\cup E_{\mp}\big[(\kD\cup\cE_{\text{n}})(Q)\cap E'\big].\label{eq:DInphiQsign}\\
  % e_{-}\big[(\cD\cup\cI_{\text{n}})(\varphi(Q))\big]=&e_{-}\big[(\cD\cup\cI_{\text{n}})(Q)\setminus E'\big]+ E_{+}\big[(\kD\cup\cE_{\text{n}})(Q)\cap E'\big]\\
  \intertext{Similarly,}
  \cE_{\text{o}}(\varphi(Q))=&\big[\cE_{\text{o}}(Q)\setminus E'\big]\cup \big[\cI_{\text{o}}(Q)\cap E'\big]\label{eq:EophiQ}\\
  E_{\pm}\big[\cE_{\text{o}}(\varphi(Q))\big]=&E_{\pm}\big[\cE_{\text{o}}(Q)\setminus
  E'\big]\cup E_{\mp}\big[\cI_{\text{o}}(Q)\cap E'\big],\label{eq:EophiQsign}\\
  \cI_{\text{o}}(\varphi(Q))=&\big[\cI_{\text{o}}(Q)\setminus E'\big]\cup\big[\cE_{\text{o}}(Q)\cap E'\big]\label{eq:IophiQ}\\
  E_{\pm}\big[\cI_{\text{o}}(\varphi(Q))\big]=&E_{\pm}\big[\cI_{\text{o}}(Q)\setminus E'\big]\cup E_{\mp}\big[\cE_{\text{o}}(Q)\cap E'\big].\label{eq:IophiQsign}
%  e_{-}\big[\cE_{\text{o}}(\varphi(Q))\big]=&e_{-}\big[\cE_{\text{o}}(Q)\setminus E'\big]+ e_{+}\big[\cI_{\text{o}}(Q)\cap E'\big]
\end{align}
Before concluding our proof, we need to relate the number of faces of $F_{\cD\cup\cI_{\text{n}}(\varphi(Q))}\in\cS_{G^{E'}}$ to the number of faces of $F_{\cD\cup\cI_{\text{n}}(Q)}\in\cS_{G}$.
\begin{align}
  f(F_{\cD\cup\cI_{\text{n}}(\varphi(Q))})=&v(F_{\cD\cup\cI_{\text{n}}(\varphi(Q))}^\star)=v\big((G^{E'})^{\cD\cup\cI_{\text{n}}(\varphi(Q))}\big)\\
  \intertext{But $\cD\cup\cI_{\text{n}}(\varphi(Q))=\big(E'\Delta E(Q)\big)\setminus \big((\cI_{\text{o}}(Q)\setminus E')\cup(\cE_{\text{o}}(Q)\cap E')\big)$, using $E(Q)=(\cD\cup\cI_{\text{o}}\cup\cI_{\text{n}})(Q)$, so}
  f(F_{\cD\cup\cI_{\text{n}}(\varphi(Q))})=&v\big((G^{E'})^{\lbt E'\Delta E(Q)\rbt\setminus \lbt(\cI_{\text{o}}(Q)\setminus E')\cup(\cE_{\text{o}}(Q)\cap E')\rbt}\big)\nonumber\\
  =&v\big((G^{E(Q)})^{\lbt(\cI_{\text{o}}(Q)\setminus E')\cup(\cE_{\text{o}}(Q)\cap E')\rbt}\big)=1+|\cI_{\text{o}}(Q)\setminus E'|+|\cE_{\text{o}}(Q)\cap E'|,\\
  \intertext{thanks to the fact that the edges in $\cI_{\text{o}}(Q)\cup\cE_{\text{o}}(Q)$ do not cross each other in $G^{E(Q)}$. With the same kind of reasoning, we get}
  f(F_{\cD\cup\cI_{\text{n}}(Q)})=&v\big((G^{E(Q)})^{\cI_{\text{o}}(Q)}\big)=1+|\cI_{\text{o}}(Q)|=1+|\cI_{\text{o}}(Q)\setminus E'|+|\cI_{\text{o}}(Q)\cap E'|\\
  \intertext{and obtain}
  f(F_{\cD\cup\cI_{\text{n}}(\varphi(Q))})=&f(F_{\cD\cup\cI_{\text{n}}(Q)})-|\cI_{\text{o}}(Q)\cap E'|+|\cE_{\text{o}}(Q)\cap E'|.\label{eq:fQQprime}
\end{align}
We are now ready to perform the last computation of this proof. We define $Q'\defi\varphi(Q)$ and $N_{G^{E'}}(Q';1,\balpha,c)\fide\alpha^{D_{\alpha}}c^{d_{c}}(1+\alpha c)^{D_{1}}(\alpha+c)^{D_{2}}$ with
\begin{subequations}
  \begin{align}
    D_{\alpha}=&E_{-}(G^{E'})\cup
    E_{+}(\cD\cup\cI_{\text{n}}(Q'))\setminus\big(E_{-}(\cD\cup\cI_{\text{n}}(Q'))\cup
    E_{-}(\cI_{\text{o}}\cup\cE_{\text{o}}(Q'))\big),\\
    d_{c}=&f(F_{\cD\cup\cI_{\text{n}}(Q')})-|\cI_{\text{o}}(Q')|,\\
    D_{1}=&E_{+}(\cE_{\text{o}}(Q'))\cup E_{-}(\cI_{\text{o}}(Q')),\\
    D_{2}=&E_{-}(\cE_{\text{o}}(Q'))\cup E_{+}(\cI_{\text{o}}(Q')).
  \end{align}
\end{subequations}  
Now, using equations \eqref{eq:EophiQsign} and \eqref{eq:IophiQsign},
\begin{align}
  D_{\alpha}=&\big(E_{-}(G)\cup E_{+}(E')\big)\setminus E_{-}(E')\cup E_{+}\big[(\cD\cup\cI_{\text{n}})(Q)\setminus E'\big]\cup E_{-}\big[(\kD\cup\cE_{\text{n}})(Q)\cap E'\big]\nonumber\\
  &\setminus\big(E_{-}\big[(\cD\cup\cI_{\text{n}})(Q)\setminus
  E'\big]\cup E_{+}\big[(\kD\cup\cE_{\text{n}})(Q)\cap
  E'\big]\nonumber\\
  &\cup E_{-}\big[(\cI_{\text{o}}\cup\cE_{\text{o}})(Q)\setminus E'\big]\cup
  E_{+}\big[(\cI_{\text{o}}\cup\cE_{\text{o}})(Q)\cap E'\big]\big).\\
  \intertext{As $E'=E'\cap E(G)=E'\cap\big[(\cD\cup\cI_{\text
      n}\cup\cI_{\text o}\cup\cE_{\text o}\cup\kD\cup\cE_{\text
      n})(Q)\big]$, we have $(\kD\cup\cE_{\text n})(Q)\cap
    E'=\big[E'\setminus(\cD\cup\cI_{\text
      n})(Q)\big]\setminus\big[(\cI_{\text o}\cup\cE_{\text o})(Q)\cap E'\big]$,
  and}
  D_{\alpha}=&E_{-}(G)\cup E_{+}(E')\cup
  E_{+}\big[(\cD\cup\cI_{\text{n}})(Q)\setminus E'\big]\cup E_{-}\big[E'\setminus (\cD\cup\cI_{\text{n}})(Q)\big]\nonumber\\
  &\setminus\big(E_{+}\big[E'\setminus
  (\cD\cup\cI_{\text{n}})(Q)\big]\cup E_{-}(E')\cup E_{-}\big[(\cI_{\text{o}}\cup\cE_{\text{o}})(Q)\cap E'\big]\nonumber\\
  &\cup E_{-}\big[(\cD\cup\cI_{\text{n}})(Q)\setminus E'\big]\cup E_{-}\big[(\cI_{\text{o}}\cup\cE_{\text{o}})(Q)\setminus E'\big]\big)\\
  =&E_{-}(G)\cup
  E_{+}(\cD\cup\cI_{\text{n}}(Q))\setminus\big(E_{-}(\cD\cup\cI_{\text{n}}(Q))\cup
  E_{-}(\cI_{\text{o}}\cup\cE_{\text{o}}(Q))\big).\\
  \nonumber\\
  \intertext{Using \cref{eq:EophiQsign,eq:IophiQ,eq:IophiQsign,eq:fQQprime},}
  d_{c}=&f(F_{\cD\cup\cI_{\text{n}}(Q)})-|\cI_{\text{o}}(Q)\cap E'|+|\cE_{\text{o}}(Q)\cap E'|-\labs\cI_{\text{o}}(Q)\setminus E'\rabs-\labs\cE_{\text{o}}(Q)\cap E'\rabs\\
  =&f(F_{\cD\cup\cI_{\text{n}}(Q)})-|\cI_{\text{o}}(Q)|,\\
  \nonumber\\
  D_{1}=&E_{+}\big[\cE_{\text{o}}(Q)\setminus E'\big]\cup
  E_{-}\big[\cI_{\text{o}}(Q)\cap E'\big]\cup
  E_{-}\big[\cI_{\text{o}}(Q)\setminus E'\big]\cup E_{+}\big[\cE_{\text{o}}(Q)\cap E'\big]\\
  =&E_{+}(\cE_{\text{o}}(Q))\cup E_{-}(\cI_{\text{o}}(Q)),\\
\nonumber\\
D_{2}=&E_{-}\big[\cE_{\text{o}}(Q)\setminus E'\big]\cup
E_{+}\big[\cI_{\text{o}}(Q)\cap E'\big]\cup
E_{+}\big[\cI_{\text{o}}(Q)\setminus E'\big]\cup E_{-}\big[\cE_{\text{o}}(Q)\cap E'\big]\\
=&E_{-}(\cE_{\text{o}}(Q))\cup E_{+}(\cI_{\text{o}}(Q)).
\end{align}
This proves that $N_{G}(Q;1,\balpha,c)=N_{G^{E'}}(\varphi(Q);1,\balpha,c)$, meaning that the bijection (\cref{eq:Bijection}) conserves independently each of the terms (i.e.\@ the $N(Q)'s$) of the quasi-tree expansion. This implies, of course, $Z(G;1,\balpha,c)=Z(G^{E'};1,\balpha,c)$.
\end{proof}

The preceding lemma shows that, given a ribbon graph $G$, a subset of
edges $E'$ and a quasi-tree $Q\in\cQ_{G}$, there exists a quasi-tree
$Q'\in\cQ_{G^{E'}}$ such that
$N_{G}(Q;1,\balpha,c)=N_{G^{E'}}(Q';1,\balpha,c)$. The subribbon graph
$Q'$ is such that $E(Q')=\varphi(Q)$. But we can also invert the
logic: given a ribbon graph $G$, a quasi-tree $Q\in\cQ_{G}$ and a
subset of edges $A\subseteq E(G)$, there exists a subset $E'$ such
that the spanning subribbon graph $Q'$ with the property that
$E(Q')=A$ is a quasi-tree in $G^{E'}$ and
$N_{G}(Q;1,\balpha,c)=N_{G^{E'}}(Q';1,\balpha,c)$. Whatever subset $A$
we choose, the bijection ensures that $Q'$ is a quasi-tree in
$G^{E'}$. This means that we can fix $A$ and deduce the set $E'$. A
very simple case is $A=\emptyset$: given $Q\in\cQ_{G}$, in which
partial dual of $G$ is the empty set a quasi-tree? The answer is given by the bijection $\varphi$:
\begin{align}
  E'\Delta E(Q)=\emptyset\iff E'=E(Q).
\end{align}
And we get: for any quasi-tree $Q\in\cQ_{G}$, $N_{G}(Q;1,\balpha,c)=N_{G^{E(Q)}}(F_{\emptyset};1,\balpha,c)$. In that case, $Q'$ having no edge, the live (or dead) edges are necessarily external. Let us define $\mathbf{\cL_{\text{\textbf{o}}}(Q)}\defi\lb\text{orientable live edges of $G^{E(Q)}$ with respect to $F_{\emptyset}$}\rb$. 
\begin{lemma}
  \label{lem:q1Exp}
  For any ribbon graph $G$, the quasi-tree expansion for $Z_{s}$ at $q=1$ can be rewritten as follows:
  \begin{align}
    Z_{s}(G;1,\balpha,c)=&c\sum_{Q\in\cQ_{G}}\alpha^{e_{-}(G^{E(Q)})}(1+\alpha c)^{e_{+}(\cL_{\text{o}}(Q))}(1+c/\alpha)^{e_{-}(\cL_{\text{o}}(Q))}.\label{eq:QTExp2Z}
  \end{align}
\end{lemma}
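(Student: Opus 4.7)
The plan is to exploit the observation made just before the statement: for every $Q\in\cQ_G$, one has
\[
N_G(Q;1,\balpha,c) \;=\; N_{G^{E(Q)}}(F_{\emptyset};1,\balpha,c),
\]
which is the $A=\emptyset$ (equivalently $E'=E(Q)$) specialisation of the inverted bijection provided by \cref{lem:NQ}. Since by \cref{eq:MonomialsQT}
\(Z_s(G;1,\balpha,c)=\sum_{Q\in\cQ_G}N_G(Q;1,\balpha,c),\)
it is enough to evaluate $N_{G^{E(Q)}}(F_{\emptyset};1,\balpha,c)$ in closed form and then sum over $Q\in\cQ_G$.

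To do this I would specialise \cref{eq:NQq1} to the ribbon graph $G^{E(Q)}$ with the distinguished ``quasi-tree'' $F_{\emptyset}$. Since this quasi-tree has empty edge-set, the sets $\cD(F_{\emptyset})$, $\cI_{\text{o}}(F_{\emptyset})$ and $\cI_{\text{n}}(F_{\emptyset})$ all vanish; by the very definition of $\cL_{\text{o}}(Q)$ recalled just before the lemma, the set $\cE_{\text{o}}(F_{\emptyset})$ of externally orientable live edges in $G^{E(Q)}$ coincides with $\cL_{\text{o}}(Q)$; and because $Q$ is a quasi-tree of $G$, the partial dual $G^{E(Q)}$ is a one-vertex ribbon graph, so $F_{\emptyset}$ reduces to a single disc whose boundary is one circle, giving $f(F_{\emptyset})=1$.

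Feeding these identifications into \cref{eq:NQq1} produces a prefactor $c$ together with the product
\[
\alpha^{E_-(G^{E(Q)})-E_-(\cL_{\text{o}}(Q))}\,(1+\alpha c)^{E_+(\cL_{\text{o}}(Q))}\,(\alpha+c)^{E_-(\cL_{\text{o}}(Q))},
\]
and the standard rewriting $(\alpha+c)^{E_-(\cL_{\text{o}}(Q))}=\alpha^{E_-(\cL_{\text{o}}(Q))}(1+c/\alpha)^{E_-(\cL_{\text{o}}(Q))}$ regroups with the first factor to give $\alpha^{E_-(G^{E(Q)})}$; summing over $Q\in\cQ_G$ then yields \cref{eq:QTExp2Z}. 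I do not anticipate any genuine obstacle: all the substantive work is already contained in \cref{lem:NQ}, and the only book-keeping points to watch are the degeneration of $\cD,\cI_{\text{o}},\cI_{\text{n}}$ at $F_{\emptyset}$, the identification $\cE_{\text{o}}(F_{\emptyset})=\cL_{\text{o}}(Q)$ in $G^{E(Q)}$, and the fact that the partial dual of a quasi-tree is one-vertex (used to conclude $f(F_{\emptyset})=1$).
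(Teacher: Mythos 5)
Your proposal is correct and is essentially the paper's own argument: the paper derives \cref{lem:q1Exp} precisely from the identity $N_{G}(Q;1,\balpha,c)=N_{G^{E(Q)}}(F_{\emptyset};1,\balpha,c)$ (the $E'=E(Q)$ case of \cref{lem:NQ}) together with the evaluation of \cref{eq:NQq1} at the edgeless quasi-tree of the one-vertex graph $G^{E(Q)}$, where $\cD$, $\cI_{\text{o}}$, $\cI_{\text{n}}$ are empty, $\cE_{\text{o}}(F_{\emptyset})=\cL_{\text{o}}(Q)$ and $f(F_{\emptyset})=1$. Your bookkeeping, including absorbing $(\alpha+c)^{E_{-}(\cL_{\text{o}}(Q))}=\alpha^{E_{-}(\cL_{\text{o}}(Q))}(1+c/\alpha)^{E_{-}(\cL_{\text{o}}(Q))}$ into the $\alpha$-prefactor to get $\alpha^{e_{-}(G^{E(Q)})}$ and the prefactor $c$ from $c^{f(F_{\emptyset})-|\cI_{\text{o}}|}=c$, matches the intended computation exactly.
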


\section{The Kauffman bracket of a virtual link diagram}
\label{sec:kauffm-brack-virt}

In \citep{Chmutov2007aa}, S.~Chmutov unified several Thistlethwaite
like theorems
\citep{Thistlethwaite1987aa,Kauffman1989aa,Dasbach2008aa,Chmutov2007ab,Chmutov2008aa}
(that is theorems relating link and (ribbon) graph polynomials). He
proved that the Kauffman bracket of a virtual link diagram $L$ equals
(an evaluation of) the signed \BRp{} of a certain ribbon graph
$G_{L}^{\ks}$; see \cref{eq:AlternatBRCorresp}. The latter is
constructed from a state $\ks$ of $L$; see below and/or \citep{Chmutov2007aa}. The equality is true for \emph{any} state $\ks$.
\begin{align}
  [L](A,B,d)=A^{n(G_{L})}B^{r(G_{L})}d^{k(G_{L})-1}R_{s}(G^{\ks}_{L};\frac{Ad}{B}+1,\frac{Bd}{A},\frac 1d).\label{eq:AlternatBRCorresp}
\end{align}
The new \emph{partial} duality of S.~Chmutov ensures the independence
of the right hand side of \eqref{eq:AlternatBRCorresp} with respect to the state $\ks$.\\

In the previous sections, we obtained a quasi-tree expansion for the \BRp. Thanks to equation \eqref{eq:AlternatBRCorresp}, we can obviously get such an expansion for the Kauffman bracket. Nevertheless, this expansion would be expressed in terms of parameters (number of vertices, edges etc) of the (subribbon graphs of the) ribbon graph $G_{L}^{\ks}$ associated with the state $\ks$ of $L$. Here we would like to get a new expansion for the Kauffman bracket, directly expressed in terms of the parameters of the states of $L$.\\

Combining equations (\cref{eq:AlternatBRCorresp}) and (\cref{eq:CorrespZRs}), we get
\begin{align}
  [L](A,B,d)=&A^{e(G_{L}^{\ks})}d^{-1}Z(G_{L}^{\ks};1,B/A,d).\label{eq:CorrespZKauff}
\end{align}
Now, using the expansion (\cref{eq:QTExp2Z}),
\begin{equation}
\begin{split} [L](A,B,d)=A^{e(G_{L}^{\ks})}\sum_{Q\in\cQ_{G_{L}^{\ks}}}\big(B/A\big)&^{e_{-}((G_{L}^{\ks})^{E(Q)})}\\
  &\big(1+Bd/A\big)^{e_{+}(\cL_{\text{o}}(Q))}\big(1+Ad/B\big)^{e_{-}(\cL_{\text{o}}(Q))}.\label{eq:ConnExpStep1}
\end{split}
\end{equation}
Let us now translate this expression into pure ``knot theoretical''
terms. For this, we need to recall how the ribbon graph $G_{L}^{\ks}$
is built, out of the state $\ks$ of the virtual link diagram $L$. The
state $\ks$ consists in a set of (possibly nested) circles, called
state circles, which writhe at the virtual crossings; see \cref{VirtWHState} for an example. For each state of $L$, each
classical crossing is resolved i.e. at each classical crossing, one
performs either an $A$- or a $B$-splitting; see \cref{fig:ABsplittings}. Now each
resolved crossing consists of two parallel strands. In the vicinity
of each former classical crossing, place one arrow on each of these
strands, pointing in opposite directions, figure
\cref{PlaceArrows}. Label these two arrows with a common name and a
sign: $+$ if the former crossing has been resolved by an $A$-splitting
and $-$ otherwise. Then pull the state circles apart, untwisting them if
needed. The result is the combinatorial representation of the ribbon
graph $G_{L}^{\ks}$, \cref{CombReprGLs}.
\begin{figure}[!htp]
  \centering
  \subfloat[An $A$-splitting]{{\label{Asplitt}}%
    \begin{tikzpicture}[decoration=snake]
      \foreach \u in {3} 
      {
        \node (un) at (0,0) {\includegraphics[scale=.4]{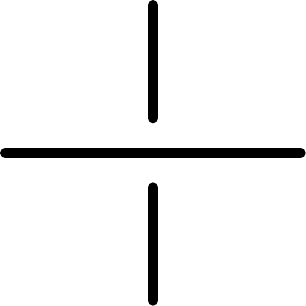}};
        \node (deux) at (\u,0) {\includegraphics[scale=.4]{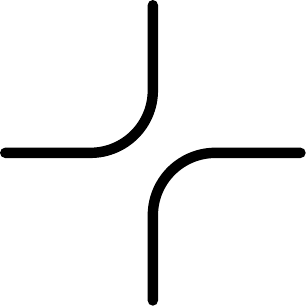}};       
        \draw [thick,->,decorate,segment length=12] (un.east)--(deux.west) % node [above,text width=3cm,text
        % centered,midway] {\color{black} cut}
        ;
%        \draw [thick,->] (deux) to [out=-90,in=90] (trois);
      }
    \end{tikzpicture}
  }\hspace{2cm}
   \subfloat[A $B$-splitting]{{\label{Asplitt}}%
    \begin{tikzpicture}[decoration=snake]
      \foreach \u in {3} 
      {
        \node (un) at (0,0) {\includegraphics[scale=.4]{knot-15}};
        \node (deux) at (\u,0) {\includegraphics[scale=.4]{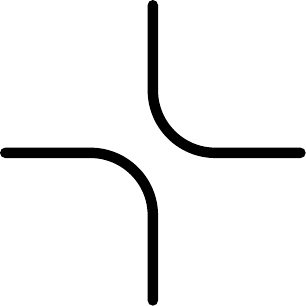}};       
        \draw [thick,->,decorate,segment length=12] (un.east)--(deux.west) % node [above,text width=3cm,text
        % centered,midway] {\color{black} cut}
        ;
%        \draw [thick,->] (deux) to [out=-90,in=90] (trois);
      }
    \end{tikzpicture}
  }
  \caption{$A$- and $B$-splittings}
  \label{fig:ABsplittings}
\end{figure}
\begin{figure}[!htp]
  \centering
  \subfloat[A virtual version of the Whitehead link]{{\label{VirtWH}}\includegraphics[scale=.4]{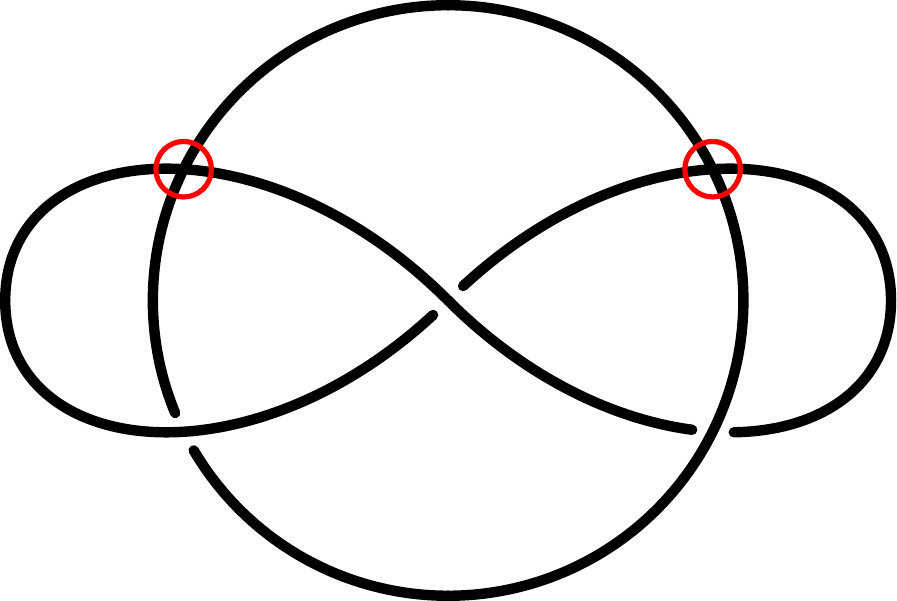}}\hspace{2cm}
  \subfloat[A state $\ks$ of the link of \cref{VirtWH}]{{\label{VirtWHState}}\includegraphics[scale=.4]{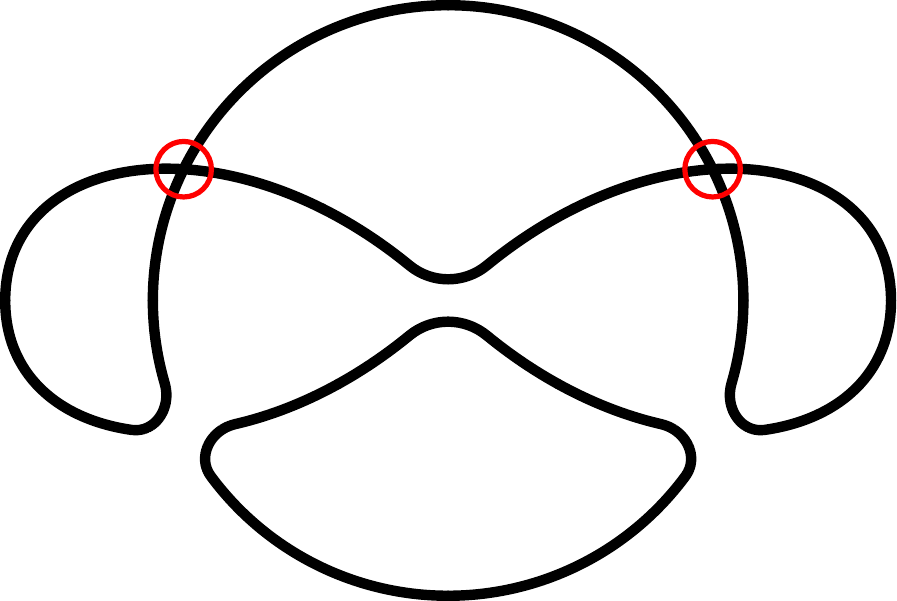}}\\
  \subfloat[Placing the edge-arrows]{{\label{PlaceArrows}}\includegraphics[scale=.4]{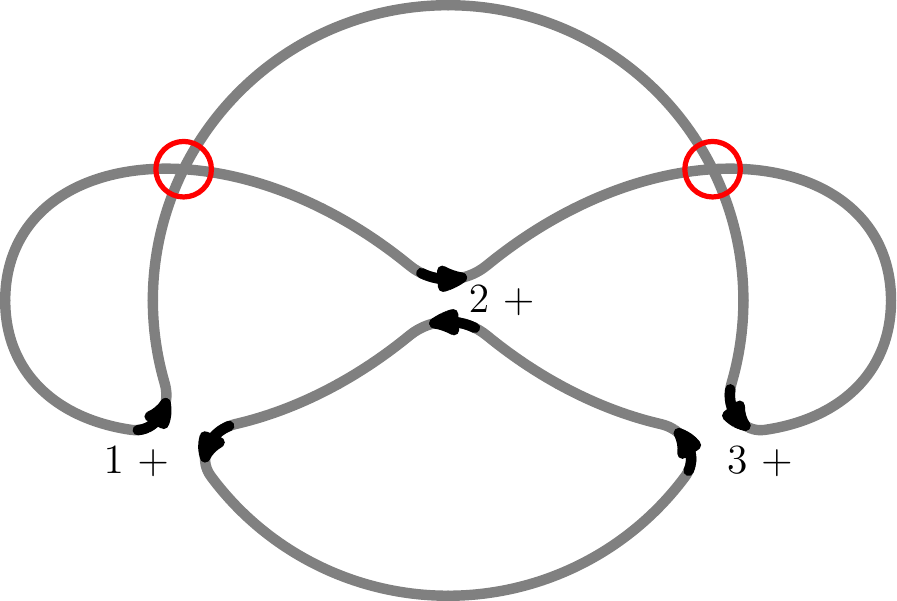}}\\
  \subfloat[Combinatorial representation of $G_{L}^{\ks}$]{{\label{CombReprGLs}}\includegraphics[scale=.5]{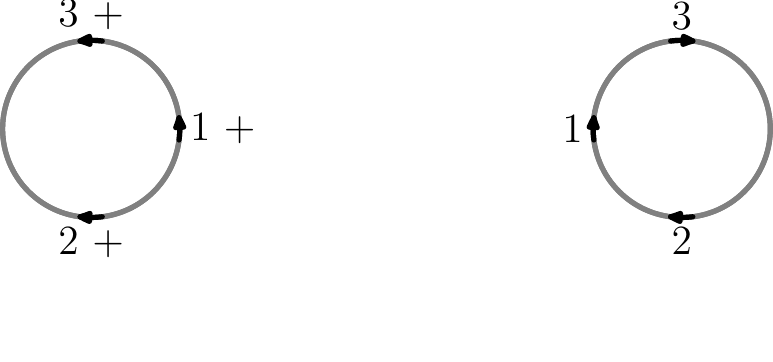}}\hspace{2cm}
  \subfloat[The ribbon graph $G_{L}^{\ks}$]{{\label{GLsEx}}\includegraphics[scale=.5]{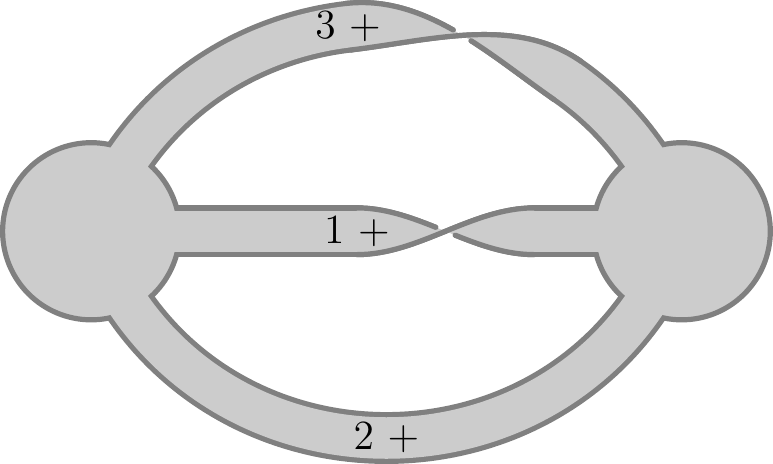}}
  \caption{Construction of a $G_{L}^{\ks}$}
  \label{fig:GLsConstruct}
\end{figure}\\

The \Kb{} and the \BRp{} being related by equation (\cref{eq:AlternatBRCorresp}), there is a one-to-one correspondence between the states of a virtual link diagram $L$ and the spanning subribbon graphs of $G_{L}^\ks$. First of all, given the construction of $G_{L}^\ks$, there is a bijection $\kappa$ between the crossings of $L$ and the edges of $G_{L}^\ks$. Then, writing $C_{\ks'\neq\ks}$ for the set of crossings which are resolved differently in $\ks$ and $\ks'$, the bijection between the states of $L$ and the subribbon graphs of $G_{L}^{\ks}$ is
\begin{align}
  \sigma: \ks'&\mapsto \sigma(\ks')=F\in\cS_{G_{L}^\ks} \text{ s.t.\@ }E(F)=\kappa(C_{\ks'\neq\ks}).
\end{align}
Another crucial point, noticed by S.~Chmutov \citep{Chmutov2007aa}, is the fact that, given two states $\ks$ and $\ks'$, the ribbon graphs $G_{L}^{\ks}$ and $G_{L}^{\ks'}$ are dual to each other with respect to $\kappa(C_{\ks\neq \ks'})$:
\begin{align}
  G_{L}^{\ks'}=&(G_{L}^{\ks})^{\kappa(C_{\ks\neq \ks'})}.\label{eq:DualStates}
\end{align}
This allows us to understand to which state a quasi-tree
corresponds. Let us consider a state $\ks'$ with only one state
circle, hereafter called a \textbf{connected state}. The ribbon graph
$G_{L}^{\ks'}$ has only one vertex. But, by equation
(\cref{eq:DualStates}), the partial dual of $G_{L}^{\ks}$ with respect
to $\kappa(C_{\ks\neq \ks'})$ has only one vertex, meaning that the
subribbon graph of $G_{L}^{\ks}$, the edge-set of which is
$\kappa(C_{\ks\neq \ks'})$, is a quasi-tree. In contrast, a quasi-tree $Q$ defines a unique connected state $\ks'$ by the equation $E(Q)=\kappa(C_{\ks\neq \ks'})$. Then the set of quasi-trees of $G_{L}^{\ks}$ corresponds to the set of connected states of $L$.\\

To complete our translation of the expansion (\cref{eq:ConnExpStep1}),
we now explain to which crossings the orientable live edges
correspond. Given a quasi-tree $Q$ of $G_{L}^{\ks}$,
$\cL_{\text{o}}(Q)$ is the set of orientable live edges of
$(G_{L}^\ks)^{E(Q)}$ with respect to $F_{\emptyset}$. But there exists
a unique connected state $\ks'=\sigma^{-1}(Q)$ such that
$(G_{L}^\ks)^{E(Q)}=G_{L}^{\ks'}$. The state circle of $\ks'$ is the
boundary of the vertex of $G_{L}^{\ks'}$. Then, to determine whether a
crossing is live with respect to $\ks'$, we mark the resolved
crossings in $\ks'$, as in the example of \cref{PlaceArrows}. What we
get is a (possibly twisting) circle with $2n$ marks ($n=$ number of
crossings of $L$), labelled with $n$ different names. To decide
whether a crossing $c$ is live or not, turn around the state circle of
$\ks'$, starting at one of the two marks corresponding to $c$. Before
reaching the second mark of $c$, we meet other labels. A label met
twice, called paired, corresponds to an edge in $G_{L}^{\ks}$ which
does not cross $\kappa(c)$ in $G^{\ks'}_{L}$. In contrast, a label met only once, called single, corresponds to an edge crossing $\kappa(c)$. Then $c$ is live if, from one mark of $c$ to the other, we meet no single lower-ordered label. Otherwise, it is dead.

Finally, a crossing $c$ is orientable with respect to a connected state $\ks'$ if, from one mark of $c$ to the other, we pass through virtual crossings an even number of times. For example, with respect to the connected state of \cref{fig:ConnectedStateEx}, crossings $1$ and $3$ are orientable, whereas crossing $2$ is non-orientable.
\begin{figure}[!htp]
  \centering
  \includegraphics[scale=.4]{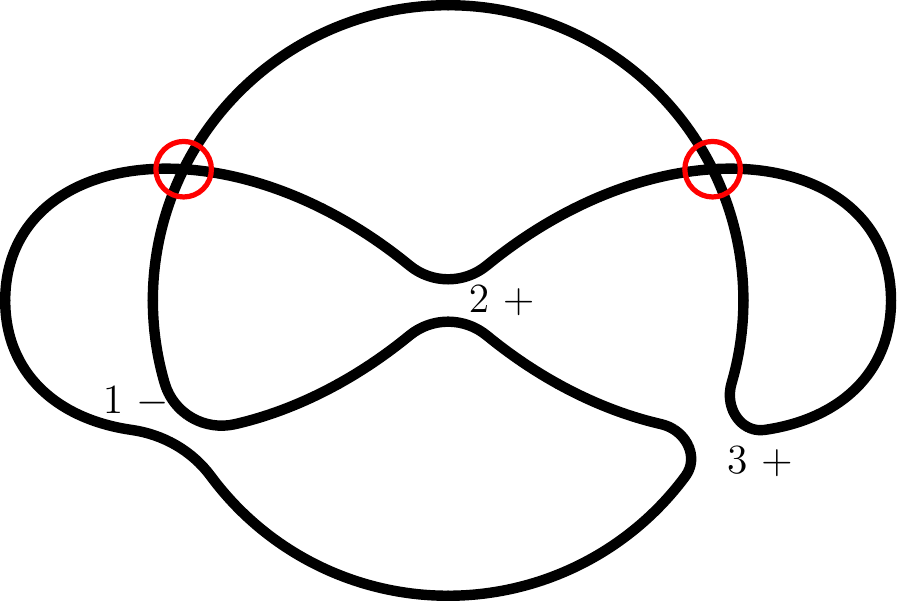}
  \caption{A connected state}
  \label{fig:ConnectedStateEx}
\end{figure}
\newpage
To sum up, we have:
\begin{itemize}
\item each quasi-tree $Q\in\cS_{G_{L}^{\ks}}$ corresponds to a connected state $\ks'=\sigma^{-1}(Q)$,
\item $e(G_{L}^{\ks})=n(L)$ is the number of crossings of $L$,
\item $e_{+}((G_{L}^{\ks})^{E(Q)})=a_{L}(\ks')$ is the number of $A$-splittings of $\ks'$,
\item $e_{-}((G_{L}^{\ks})^{E(Q)})=b_{L}(\ks')$ is the number of $B$-splittings of $\ks'$,
\item $e_{+}(\cL_{\text{o}}(Q))\fide|\cL_{\text{o}}^{a}(\ks')|$ is the
  number of live orientable crossings resolved by $A$-splittings in $\ks'$,
\item $e_{-}(\cL_{\text{o}}(Q)) \fide|\cL_{\text{o}}^{b}(\ks')|$ is the
  number of live orientable crossings resolved by $B$-splittings in $\ks'$.
\end{itemize}
So we get:
\begin{lemma}[Connected state expansion]
  \label{lem:ConnExpKb}
  Let $L$ be a virtual link diagram. For any order for the crossings of $L$, the \Kb{} can be rewritten as
  \begin{equation}
     [L](A,B,d)=\sum_{\substack{\text{connected}\\\text{states $\ks'$ of $L$}}}A^{a_{L}(\ks')}B^{b_{L}(\ks')}
      \big(1+Bd/A\big)^{|\cL_{\text{o}}^{a}(\ks')|}\big(1+Ad/B\big)^{|\cL_{\text{o}}^{b}(\ks')|}.\label{eq:ConnExp}
 \end{equation}
\end{lemma}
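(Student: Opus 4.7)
The plan is to take the expansion \eqref{eq:ConnExpStep1} as the starting point and to reindex it via the bijection $\sigma^{-1}$ between quasi-trees of $G_L^{\ks}$ and connected states of $L$ that has already been set up in the text. Every factor on the right-hand side of \eqref{eq:ConnExpStep1} is a quantity attached either to the ribbon graph $(G_L^{\ks})^{E(Q)}$ or to its edge-decomposition with respect to $F_{\emptyset}$, and by \eqref{eq:DualStates} we have $(G_L^{\ks})^{E(Q)}=G_L^{\ks'}$ where $\ks'=\sigma^{-1}(Q)$. Consequently, each of these quantities has a direct reading in terms of the data of the connected state $\ks'$, and once this dictionary is verified the lemma follows by a purely symbolic substitution.

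Concretely, I would first combine the prefactor $A^{e(G_L^{\ks})}$ with $(B/A)^{e_-((G_L^{\ks})^{E(Q)})}$. Since $e(G_L^{\ks})$ is by construction the number $n(L)$ of classical crossings of $L$, and since edges of $(G_L^{\ks})^{E(Q)}=G_L^{\ks'}$ with sign $+$ (resp.\ $-$) correspond, through the bijection $\kappa$ and the sign convention recalled on \cpageref{page:SignPartialDual}, to crossings of $L$ resolved by an $A$-splitting (resp.\ $B$-splitting) in $\ks'$, one obtains $A^{a_L(\ks')}B^{b_L(\ks')}$ using $a_L(\ks')+b_L(\ks')=n(L)$. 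Next, I would identify the orientable live edges of $G_L^{\ks'}$ with respect to $F_{\emptyset}$: since $v(G_L^{\ks'})=1$, every edge is a loop, the crossing/linking relations reduce to the combinatorial description of the state circle of $\ks'$ marked as in \cref{PlaceArrows}, and the sign of each edge is still the label of the corresponding splitting. Thus $e_{+}(\cL_{\text{o}}(Q))$ and $e_{-}(\cL_{\text{o}}(Q))$ are exactly $|\cL_{\text{o}}^{a}(\ks')|$ and $|\cL_{\text{o}}^{b}(\ks')|$, which is precisely what we want.

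The only genuine content is therefore verifying that the three combinatorial notions — being a loop, being orientable, being live — as used in \cref{def:activities} for the edges of $G_L^{\ks'}$ (a one-vertex ribbon graph) translate verbatim into the notions defined in the paragraph preceding the lemma in terms of the marked state circle of $\ks'$. For orientability, one uses that a loop in a one-vertex ribbon graph is non-orientable iff its two attaching arrows on the vertex-boundary point in compatible directions, which for $G_L^{\ks'}$ is exactly the parity of the number of virtual crossings crossed when traveling along the state circle between the two marks of the crossing; for the liveness criterion one uses \cref{def:Cross} applied to the vertex of $G_L^{\ks'}$ and the fact that two arrow-pairs interleave along this vertex-boundary iff the corresponding labels are single (not paired) between the two marks of the crossing under consideration.

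The main obstacle will be making this combinatorial dictionary fully rigorous, as it requires chasing definitions across several constructions: the arrow representation of $G_L^{\ks'}$ from \cref{CombReprGLs}, the sign change under partial duality, and the behavior of twists under untwisting the state circles. Once these identifications are stated precisely, substituting them into \eqref{eq:ConnExpStep1} and noting that the left-hand side $[L](A,B,d)$ does not depend on the auxiliary state $\ks$ yields \eqref{eq:ConnExp} directly, with no further computation required.
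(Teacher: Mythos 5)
Your proposal follows essentially the same route as the paper: it starts from \eqref{eq:ConnExpStep1}, reindexes the sum via the bijection between quasi-trees of $G_{L}^{\ks}$ and connected states given by \eqref{eq:DualStates}, and translates $e_{\pm}((G_{L}^{\ks})^{E(Q)})$ and $e_{\pm}(\cL_{\text{o}}(Q))$ into $a_{L}(\ks'),b_{L}(\ks')$ and $|\cL_{\text{o}}^{a}(\ks')|,|\cL_{\text{o}}^{b}(\ks')|$ using the sign change under partial duality and the marked-state-circle description of live and orientable crossings, which is exactly the dictionary the paper sets up before the lemma. The identifications you flag as the remaining work are precisely the ones the paper verifies (informally) in the paragraphs preceding \cref{lem:ConnExpKb}, so your argument is correct and complete in the same sense as the paper's.
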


\vspace{.7cm}

\paragraph{Acknowledgements}
\label{sec:acknowledgements}

I thank A.~Champanerkar, I.~Kofman and N.~Stoltzfus for having
explained to me some details of their inspiring work
\citep{Champanerkar2007aa}. I also acknowledge the anonymous referee's
work that led to this improved version.

\newpage
\appendix

\section{Examples}
\label{sec:example}

\subsection{The \BRp}
\label{sec:brp-1}

We give here an example of our quasi-tree expansion of the signed \BRp{} of not necessarily orientable ribbon graphs. We choose the non-orientable signed ribbon graph $G$ of \cref{fig:NORibbonEx}.
\begin{figure}[!htp]
  \centering
  \includegraphics[scale=.3]{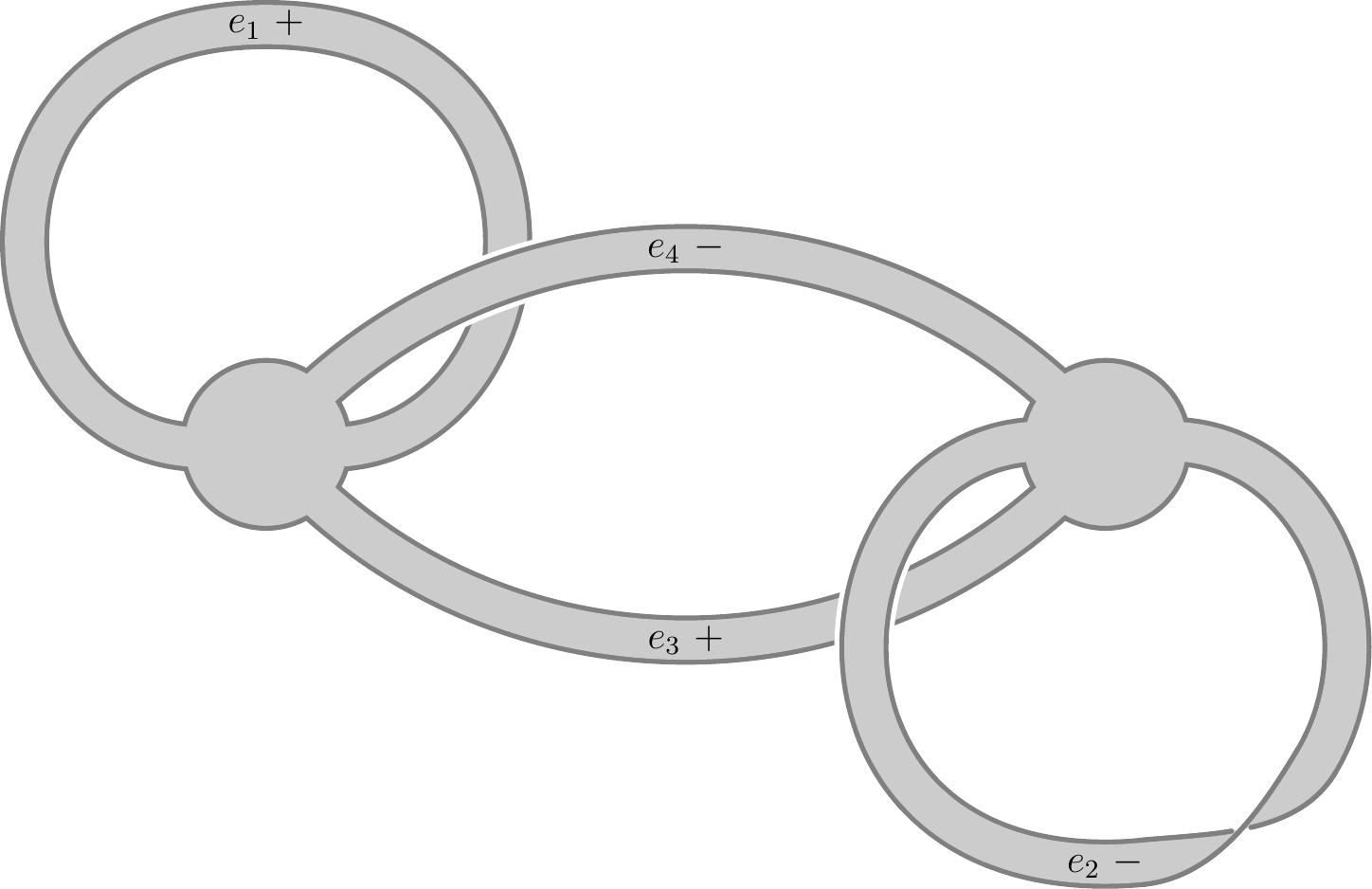}
  \caption{A non-orientable signed ribbon graph}
  \label{fig:NORibbonEx}
\end{figure}
According to equation (\cref{eq:SignedBRpDef}), the signed \BRp{} of $G$ is
\begin{multline}
  R_{s}(G;x+1,y,z)=1+3y+y^{2}+xz+yz+2xyz+y^{2}z\\
  +xy^{2}z+xyz^{2}+y^{2}z^{2}+xy^{2}z^{3}+x^{-1}y+x^{-1}y^{2}.\label{eq:SubExpEx}
\end{multline}
We now check that the quasi-tree expansion (\cref{eq:SBRpExp}) gives the same polynomial. To this aim, according to \cref{thm:QTSBRP}, we define
\begin{subequations}
  \begin{align}
    P(G;x,y,z)\defi&\sum_{Q\in\cQ_{G}}N(G,Q)S(G_{Q}),\\
    N(G,Q)\defi&(x^{-1/2}y^{1/2})^{\,e_{-}(G)}x^{e_{-}(\cD(Q)\cup\cI_{\text{n}}(Q))}y^{n(F_{\cD(Q)\cup\cI_{\text{n}}(Q)})-e_{-}(\cD(Q)\cup\cI_{\text{n}}(Q))}\nonumber\\
    &z^{(k-f+n)(F_{\cD(Q)\cup\cI_{\text{n}}(Q)})} (1+x)^{e_{-}(\cE_{\text{o}}(Q))}(1+y)^{e_{+}(\cE_{\text{o}}(Q))},\\
    S(G_{Q})\defi&(x^{1/2}y^{-1/2})^{r(G_{Q})+e_{-}(\cI_{\text{o}}(Q))}Ra(G_{Q};1,x^{-1/2}y^{1/2},x^{1/2}y^{1/2},x^{1/2}y^{1/2}z^{2}).
  \end{align}
\end{subequations}
We want to check that $P(G;x,y,z)=R_{s}(G;x+1,y,z)$. Table \cref{tab:QTreeExpEx} lists the information necessary for computing the polynomial $P$. We get
\begin{multline}
  P(G;x,y,z)=(1+x^{-1})y(1+y)+1+y+(1+x)y(1+y)z\\
  +x(1+y)z+y(1+yz^{2})+xyz^{2}+xy^{2}z^{3}.\label{eq:QTEX}
\end{multline}
We easily see that the right hand sides of equations (\cref{eq:SubExpEx}) and (\cref{eq:QTEX}) are equal.
   \begin{landscape}

      \begin{longtable}{|Sc|Sc||Sc|Sc||Sc|Sc|}%{|m{.1cm}|m{6cm}||m{1cm}|m{1cm}|m{1cm}|m{1cm}|m{1cm}|m{1cm}|m{1cm}|}
        \caption{Quasi-tree expansion of $R_s(G)$}\\
        \hline
        $\mathbf{E(Q)}$&$\mathbf{G^{E(Q)}}$&$\mathbf{\cI_{\text{o}}(Q)}$,
        $\mathbf{\cI_{\text{n}}(Q)}$, $\mathbf{\cD(Q)}$, $\mathbf{\cE_{\text{o}}(Q)}$&$\mathbf{N(G,Q)}$&$\mathbf{G_{Q}}$&$\mathbf{S(G_{Q})}$\\
        \hline
        \endhead
        \multicolumn{6}{r}{\textsf{Continued on next page}}
        \endfoot
        \hline
        \endlastfoot
        $\set{e_{3}}$&\raisebox{-.5\height}{\includegraphics[scale=.2]{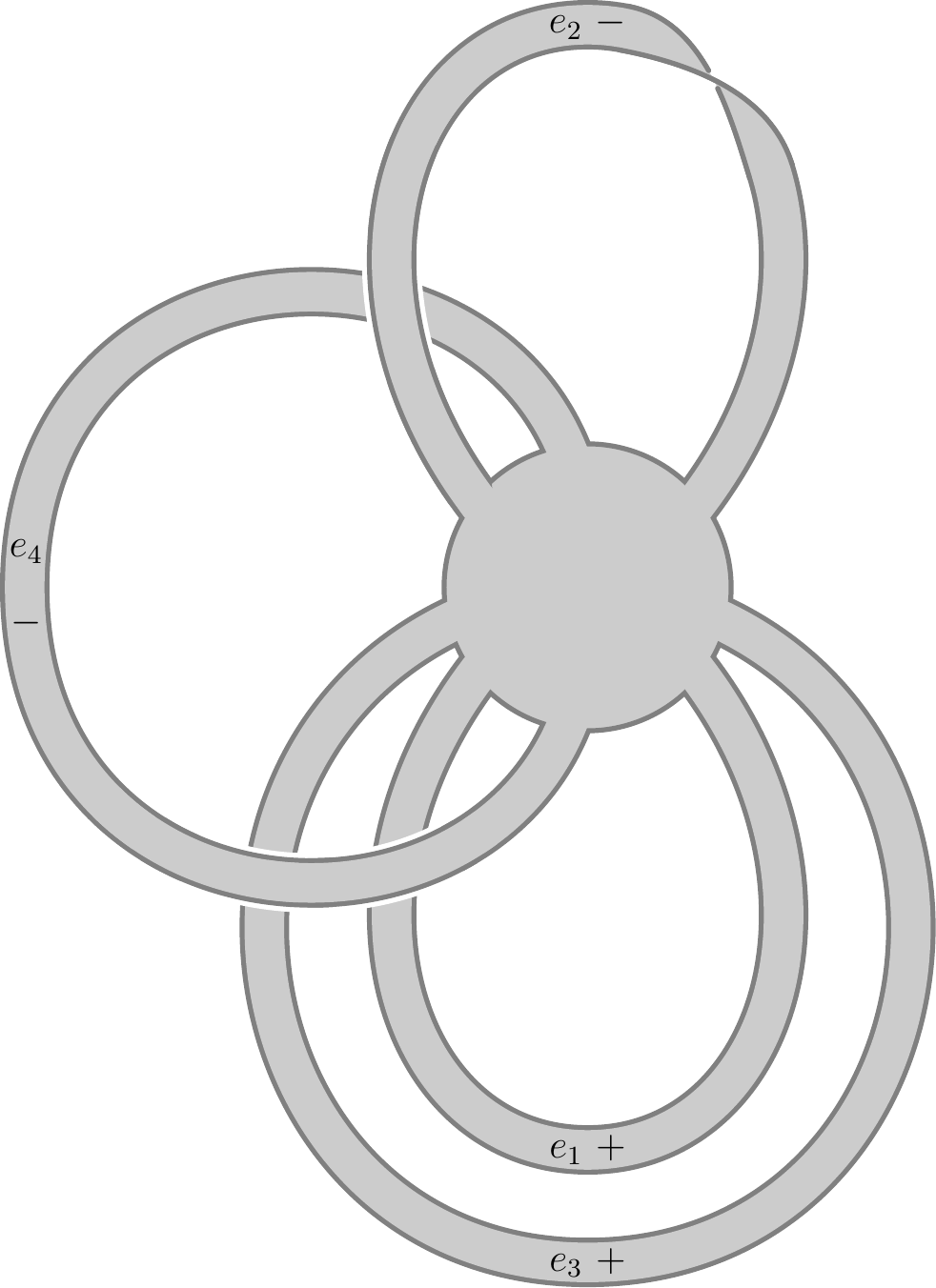}}&$\set{e_{3}}$,
        $\emptyset$, $\emptyset$, $\set{e_{1}}$&$x^{-1/2}y^{1/2}(1+y)$&\raisebox{-.5\height}{\includegraphics[scale=.7]{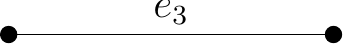}}&$x^{1/2}y^{1/2}+x^{-1/2}y^{1/2}$\\
        % \hline
        $\set{e_{4}}$&\raisebox{-.5\height}{\includegraphics[scale=.2]{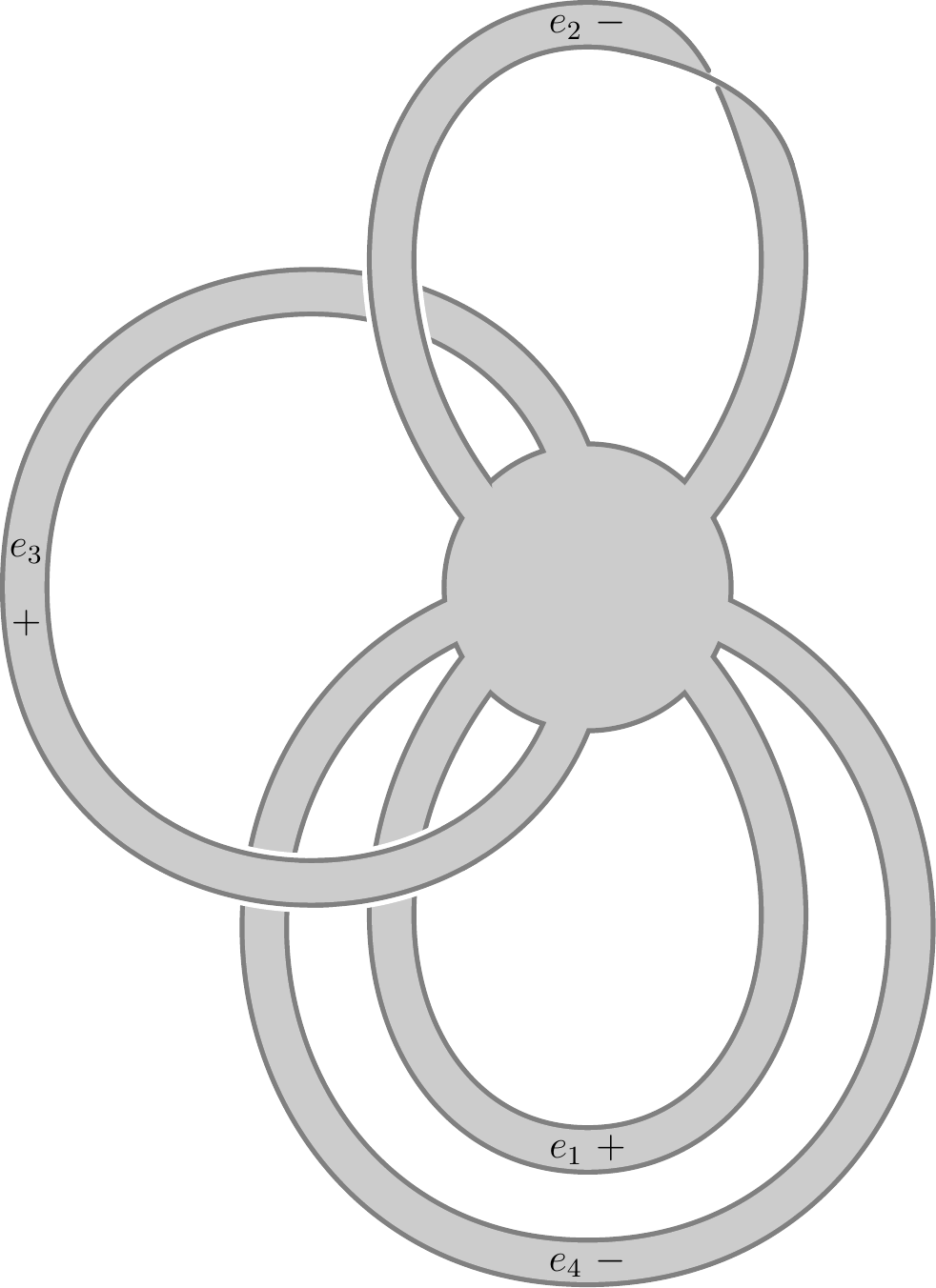}}&$\emptyset$,
        $\emptyset$, $\set{e_{4}}$, $\set{e_{1}}$&$(1+y)$&\raisebox{-.5\height}{\includegraphics[scale=.7]{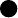}}&$1$\\
        % \hline
        $\set{e_{2},e_{3}}$&\raisebox{-.5\height}{\includegraphics[scale=.2]{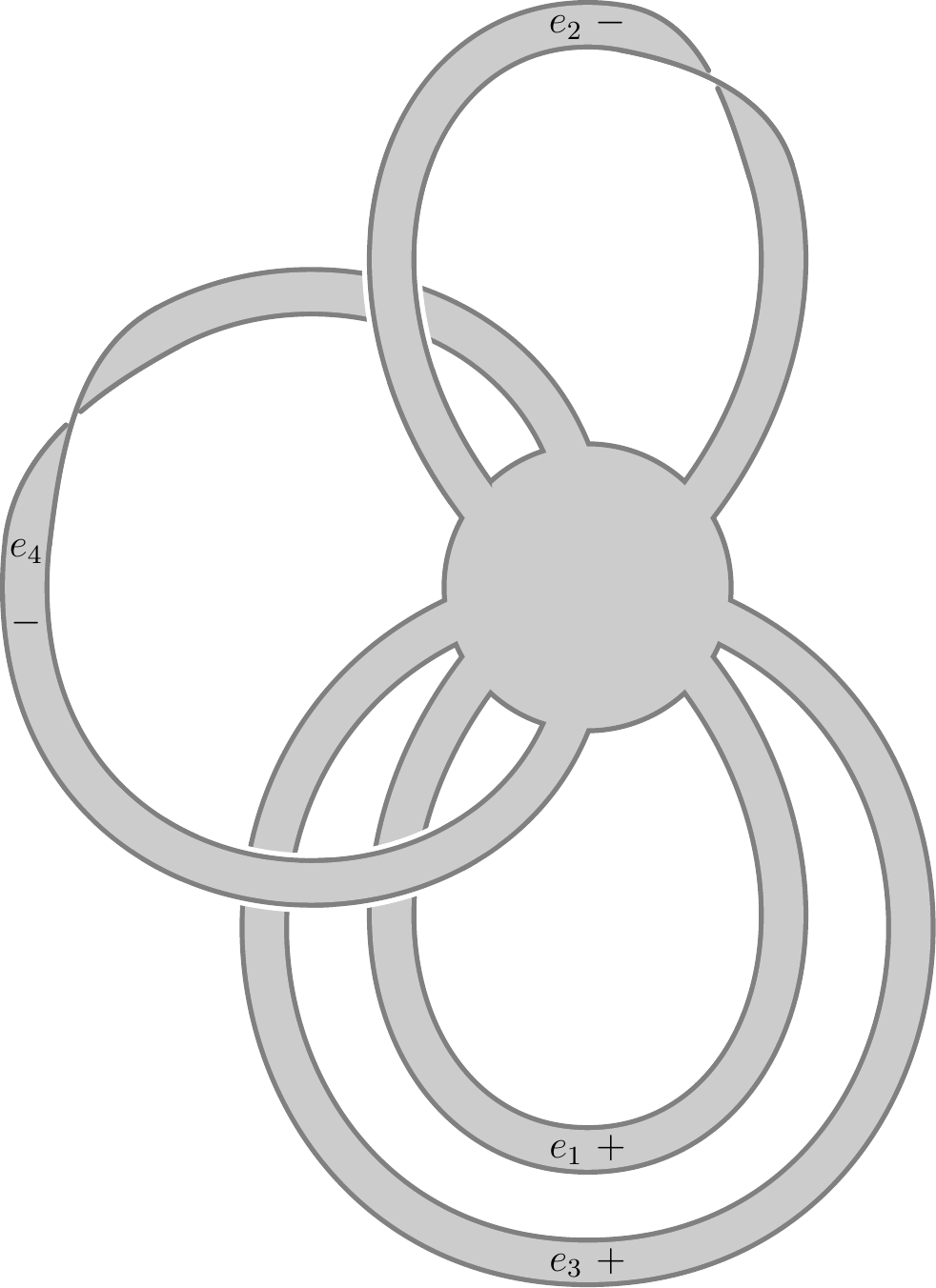}}&$\set{e_{3}}$,
        $\set{e_{2}}$, $\emptyset$, $\set{e_{1}}$&$x^{1/2}y^{1/2}z(1+y)$&\raisebox{-.5\height}{\includegraphics[scale=.7]{QTree-fig-12.pdf}}&$x^{1/2}y^{1/2}+x^{-1/2}y^{1/2}$\\
        % \hline
        $\set{e_{2},e_{4}}$&\raisebox{-.5\height}{\includegraphics[scale=.2]{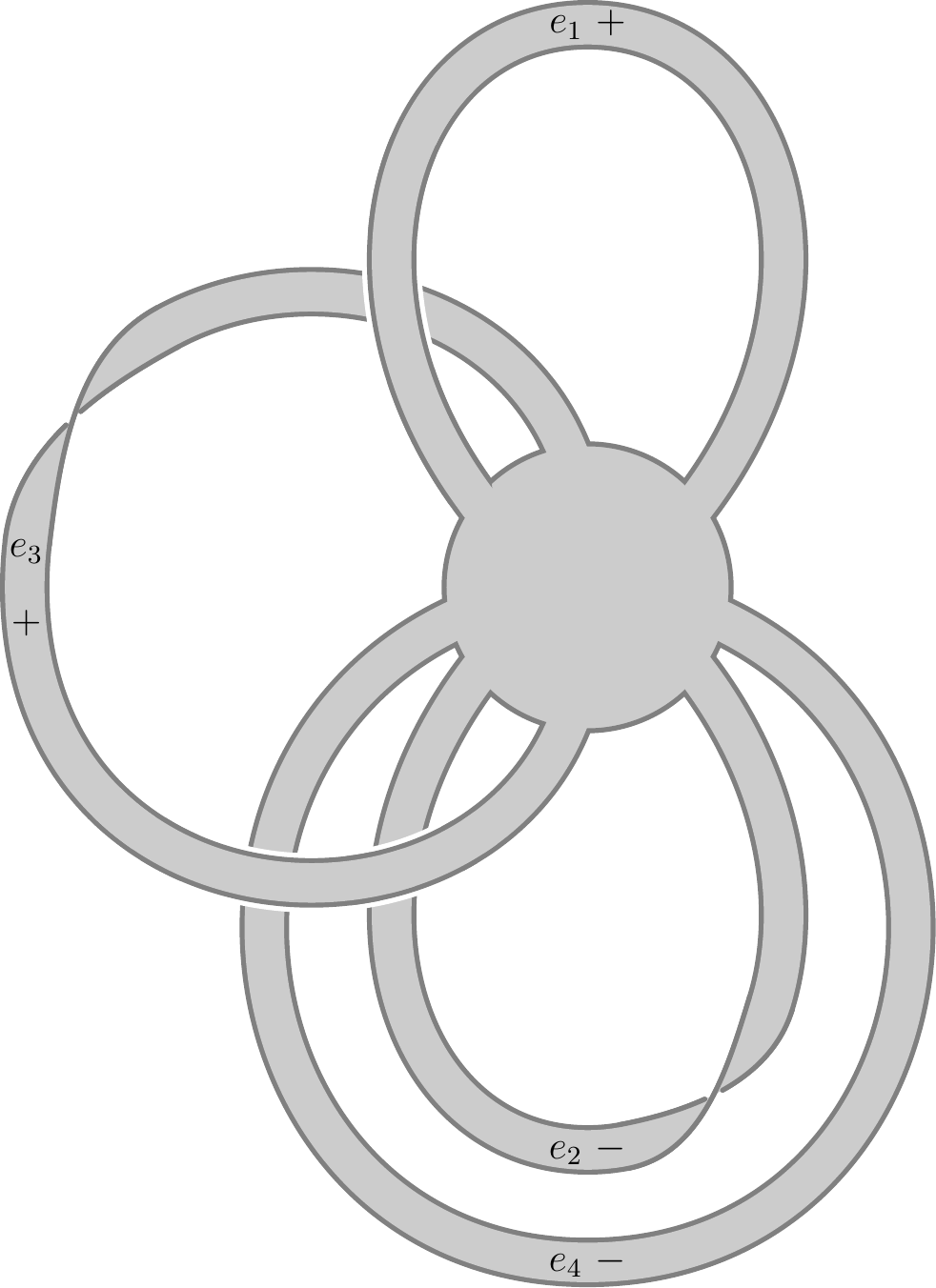}}&$\emptyset$,
        $\set{e_{2}}$, $\set{e_{4}}$, $\set{e_{1}}$&$xz(1+y)$&\raisebox{-.5\height}{\includegraphics[scale=.7]{QTree-fig-13.pdf}}&$1$\\
        % \hline
        $\set{e_{1},e_{3},e_{4}}$&\raisebox{-.5\height}{\includegraphics[scale=.2]{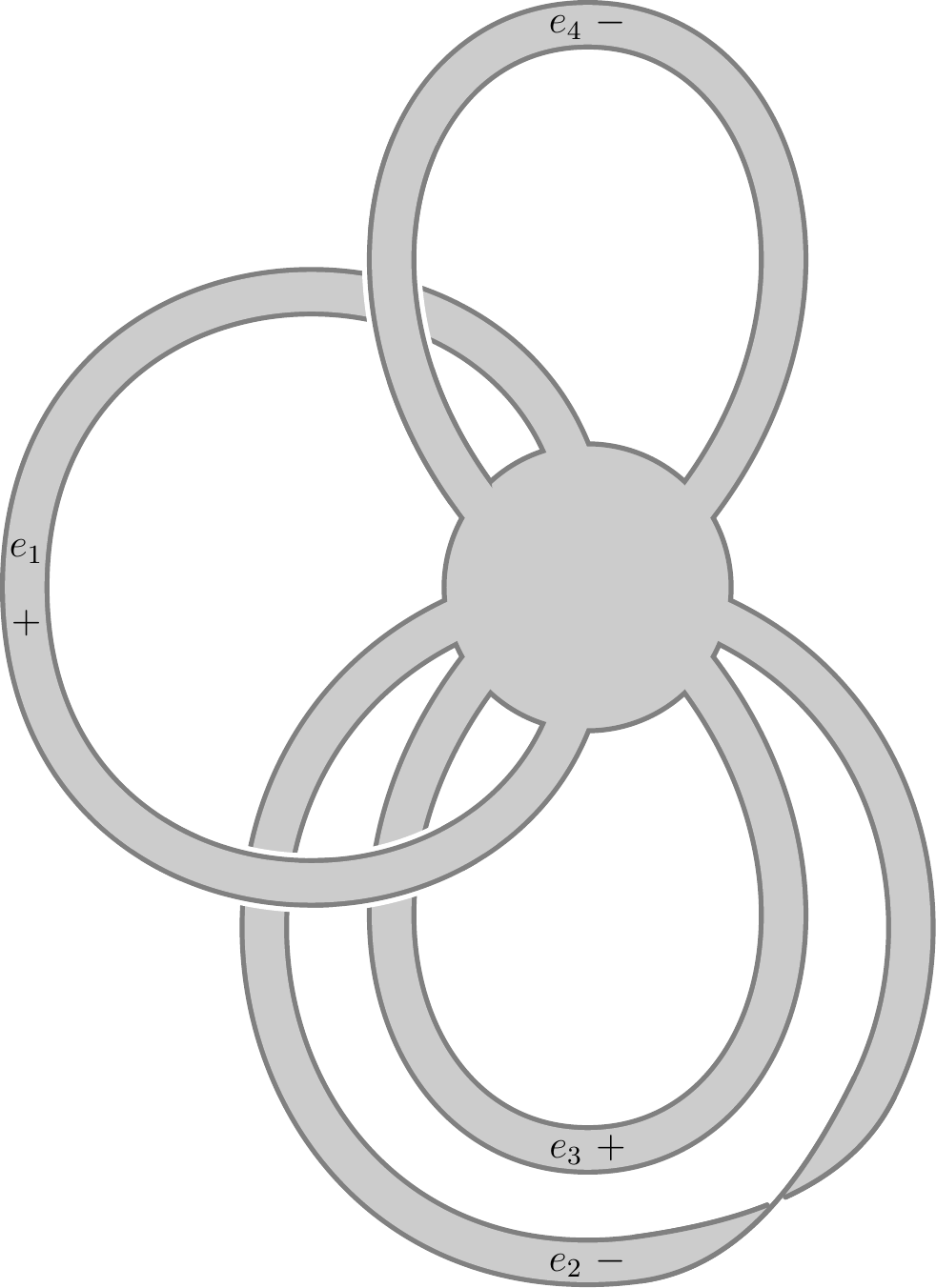}}&$\set{e_{1}}$,
        $\emptyset$, $\set{e_{3},e_{4}}$, $\emptyset$&$y$&\raisebox{-.5\height}{\includegraphics[scale=.7]{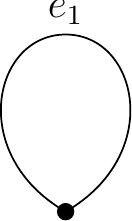}}&$1+yz^{2}$\\
        % \hline
        $\set{e_{2},e_{3},e_{4}}$&\raisebox{-.5\height}{\includegraphics[scale=.2]{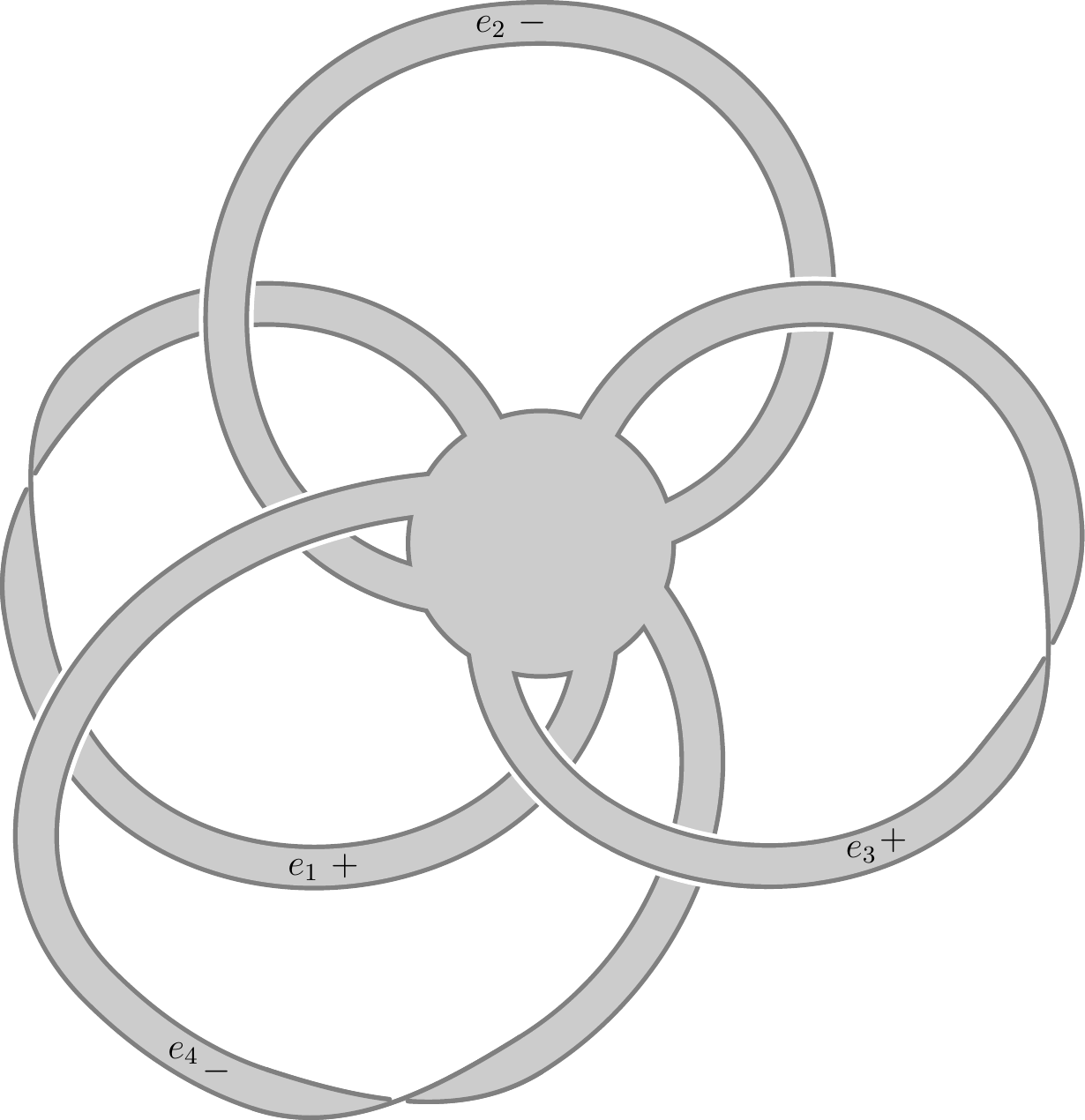}}&$\emptyset$,
        $\emptyset$, $\set{e_{2},e_{3},e_{4}}$, $\emptyset$&$xyz^{2}$&\raisebox{-.5\height}{\includegraphics[scale=.7]{QTree-fig-13.pdf}}&$1$\\
        % \hline
        $\set{e_{1},e_{2},e_{3},e_{4}}$&\raisebox{-.5\height}{\includegraphics[scale=.2]{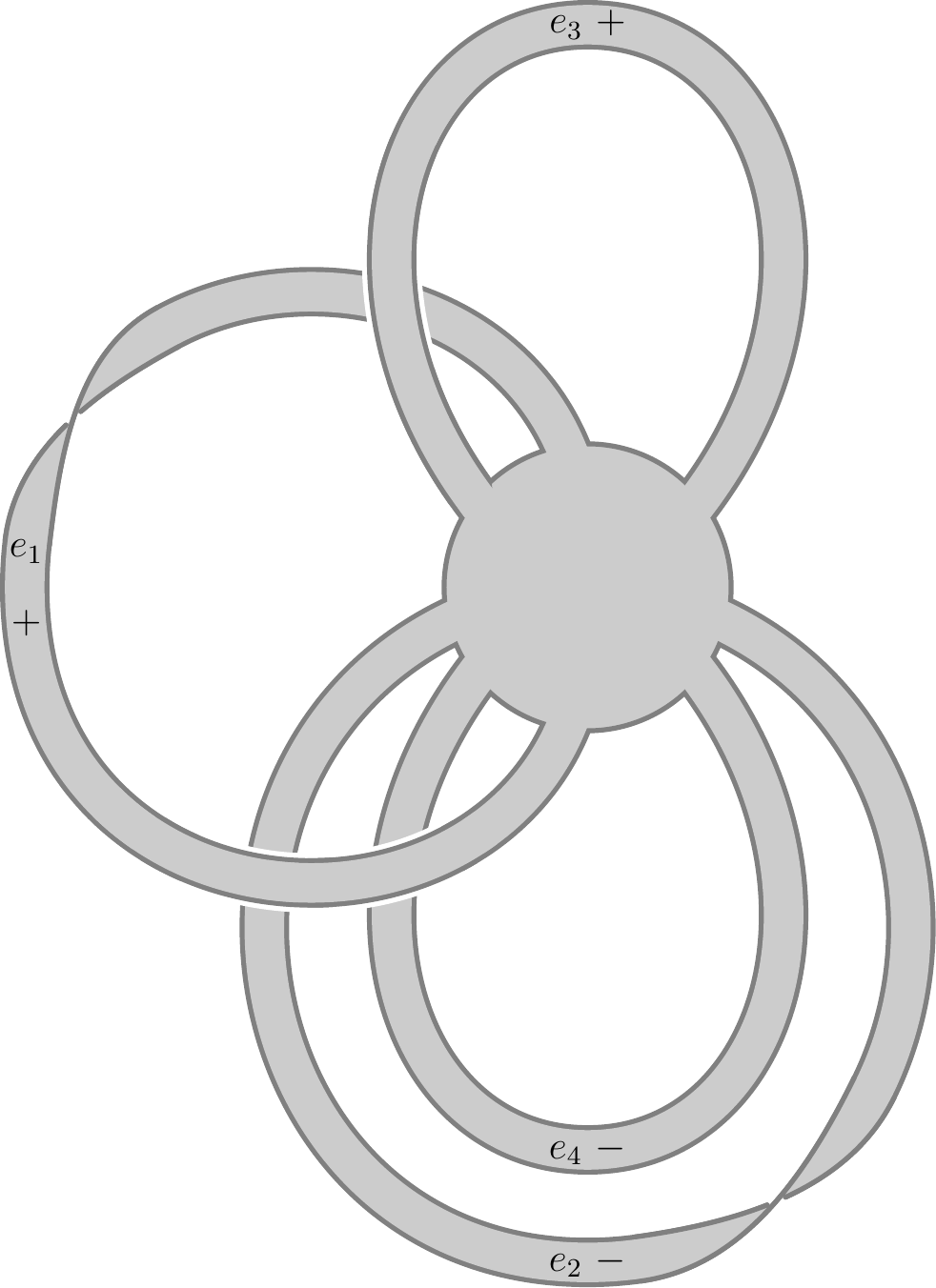}}&$\emptyset$,
        $\set{e_{1}}$, $\set{e_{2},e_{3},e_{4}}$,
        $\emptyset$&$xy^{2}z^{3}$&\raisebox{-.5\height}{\includegraphics[scale=.7]{QTree-fig-13.pdf}}&$1$
        \label{tab:QTreeExpEx}
      \end{longtable}
    \end{landscape}

\subsection{The \Kb{}}
\label{sec:kb}

We exemplify here the connected state expansion of the virtual version of the Whitehead link of \cref{VirtWH}. We label the crossings $1,2$ and $3$ as in figures \cref{PlaceArrows} and \cref{fig:ConnectedStateEx}. We choose the following order: $1\prec 2\prec 3$. On one hand,
\begin{align}
  [L](A,B,d)=&\sum_{\text{states $\ks$ of $L$}}A^{a_{L}(\ks)}B^{b_{L}(\ks)}d^{c_{L}(\ks)-1}\\
  =&A^{3}d+3A^{2}B+2AB^{2}+AB^{2}d+B^{3}.\label{eq:StateExpEx}
\end{align}
On the other hand,
\begin{align}
  &\sum_{\substack{\text{connected}\\\text{states $\ks'$ of
        $L$}}}A^{a_{L}(\ks')}B^{b_{L}(\ks')}
  \big(1+Bd/A\big)^{|\cL_{\text{o}}^{a}(\ks')|}\big(1+Ad/B\big)^{|\cL_{\text{o}}^{b}(\ks')|}\\
  =&A^{2}B(1+Bd/A)+A^{2}B+AB^{2}+A^{2}B(1+Ad/B)+AB^{2}+B^{3},
\end{align}
which is easily checked to be equal to (\cref{eq:StateExpEx}).
\begin{longtable}{|>{% \large
    }Sc|>{% \large
    }Sc||>{% \large
    }Sc|}
     \caption{States of $L$}\\
    \hline
 State $\ks$&$(a_{L}(\ks),b_{L}(\ks),c_{L}(\ks))$&\parbox{2.8cm}{\centering$\cL_{\text{o}}^{a}(\ks),\cL_{\text{o}}^{b}(\ks)$\\\text{(if $c_{L}(\ks)=1$)}}\\
    \hline
    \endhead
    \multicolumn{3}{r}{\textsf{Continued on next page}}
    \endfoot
    \hline
    \endlastfoot
    \raisebox{-.5\height}{\includegraphics[scale=.2]{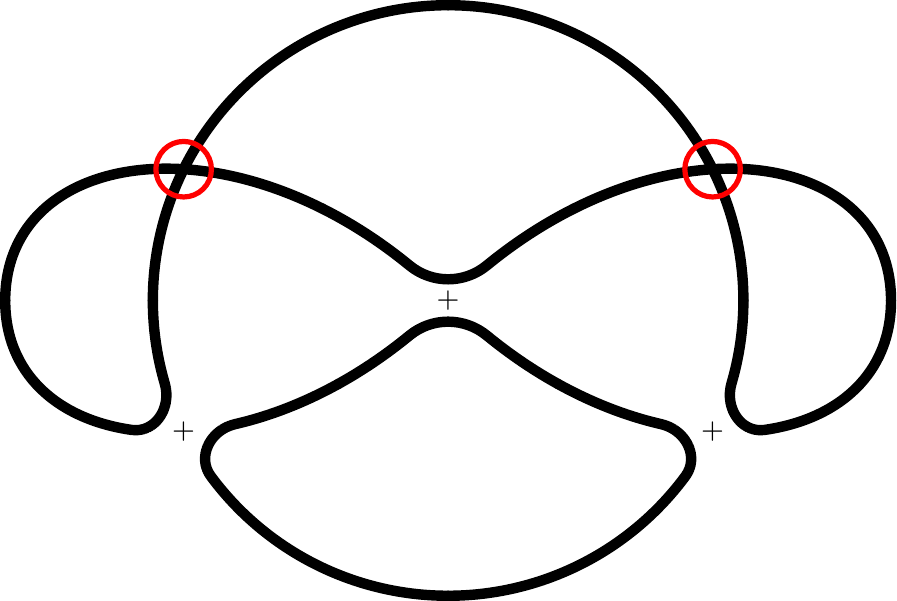}}&$(3,0,2)$&\\
    \raisebox{-.5\height}{\includegraphics[scale=.2]{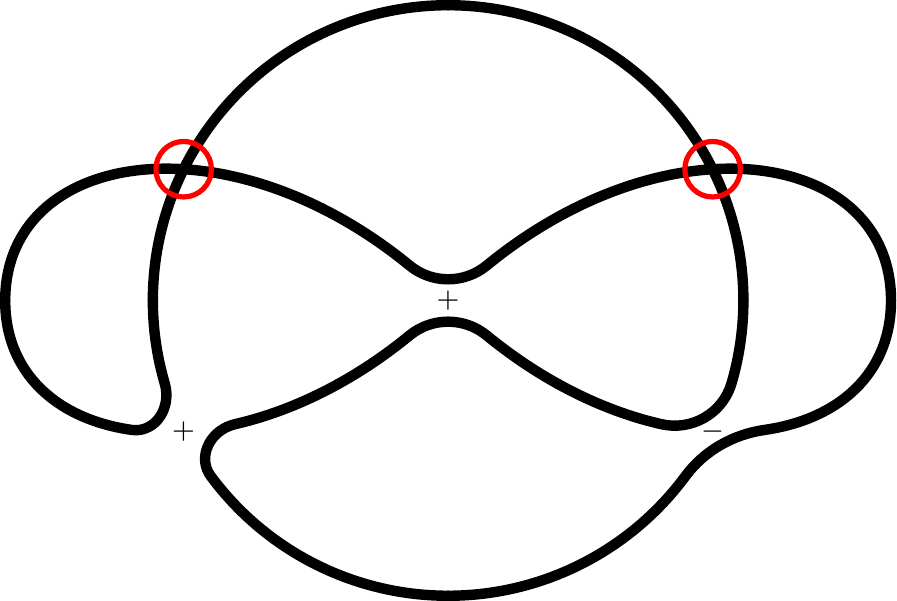}}&$(2,1,1)$&$\set{1},\emptyset$\\
    \raisebox{-.5\height}{\includegraphics[scale=.2]{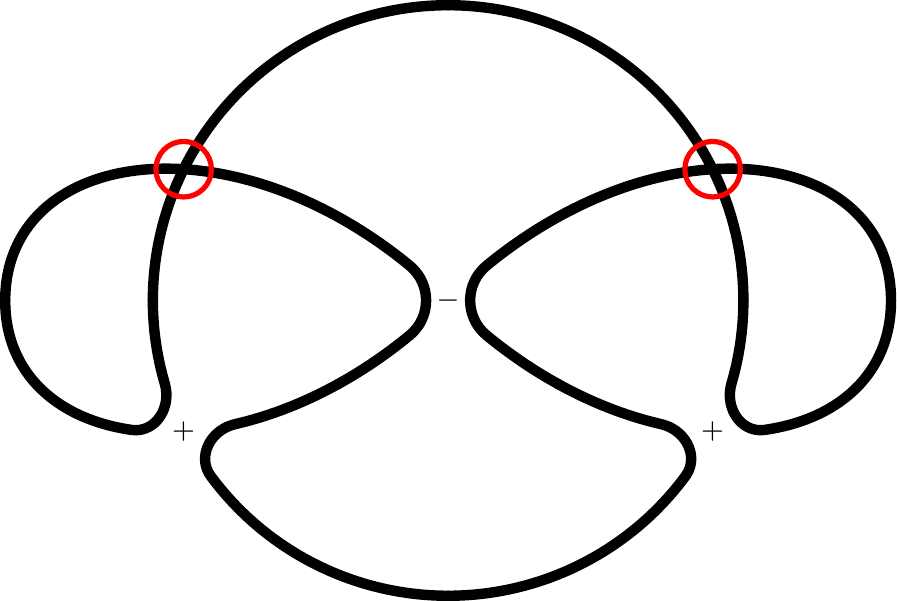}}&$(2,1,1)$&$\emptyset,\emptyset$\\
    \raisebox{-.5\height}{\includegraphics[scale=.2]{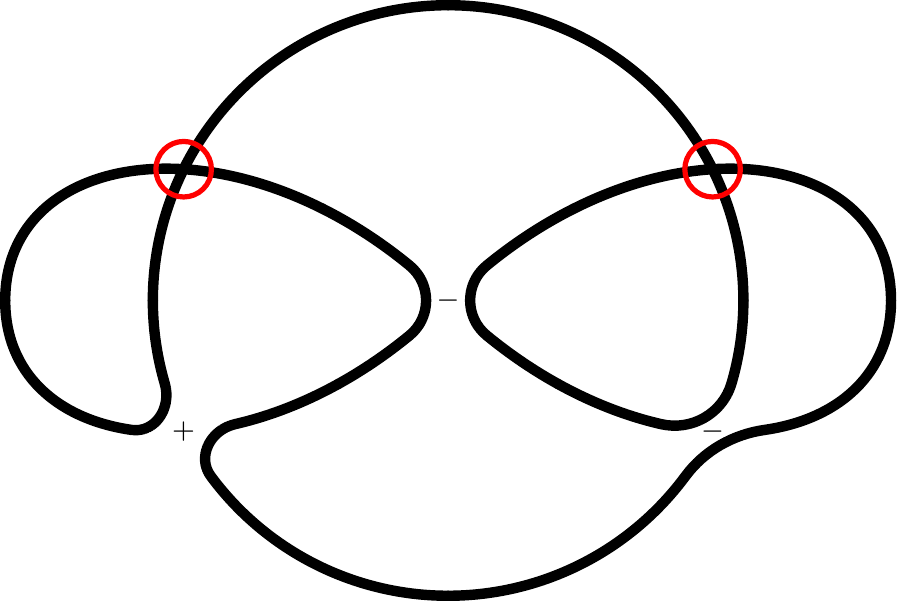}}&$(1,2,1)$&$\emptyset,\emptyset$\\
    \raisebox{-.5\height}{\includegraphics[scale=.2]{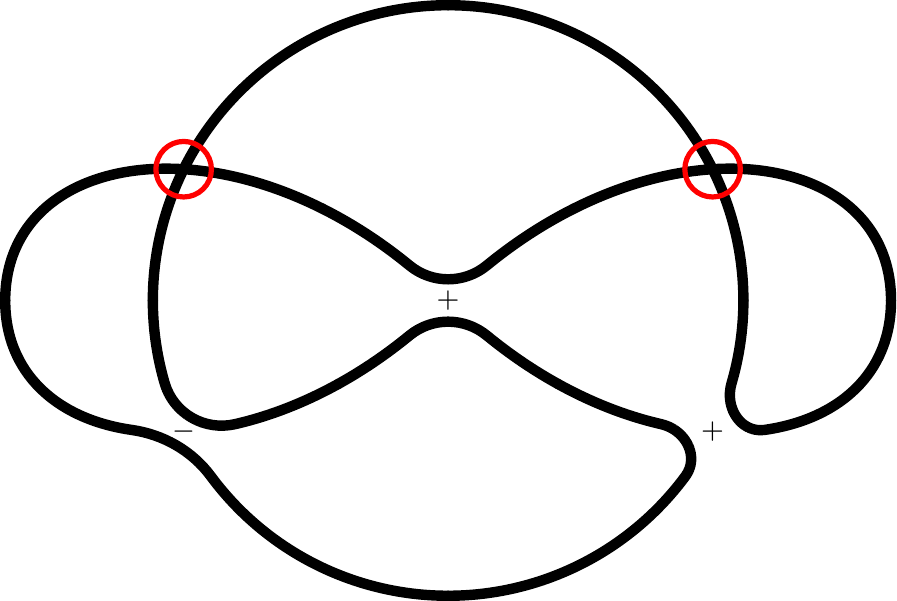}}&$(2,1,1)$&$\emptyset,\set{1}$\\
    \raisebox{-.5\height}{\includegraphics[scale=.2]{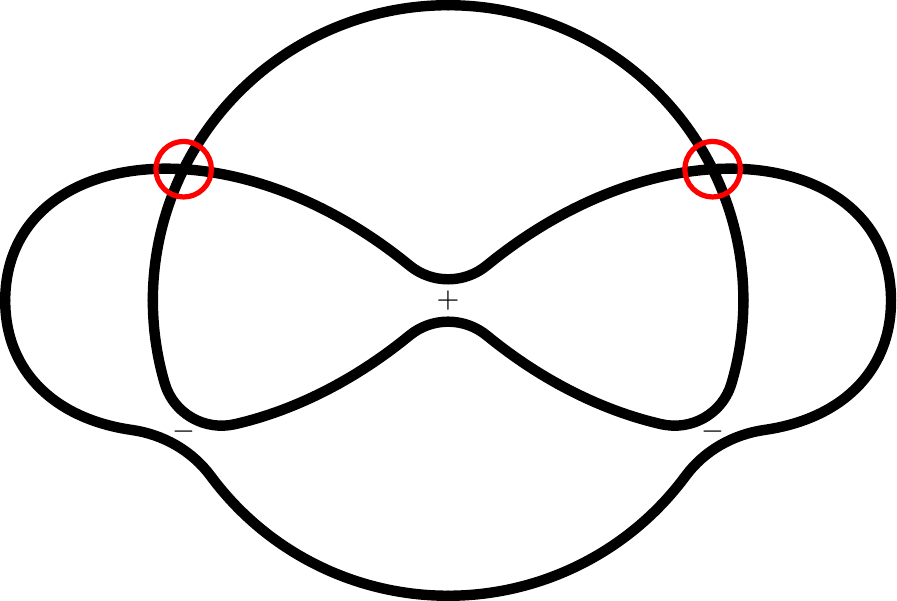}}&$(1,2,2)$&\\
    \raisebox{-.5\height}{\includegraphics[scale=.2]{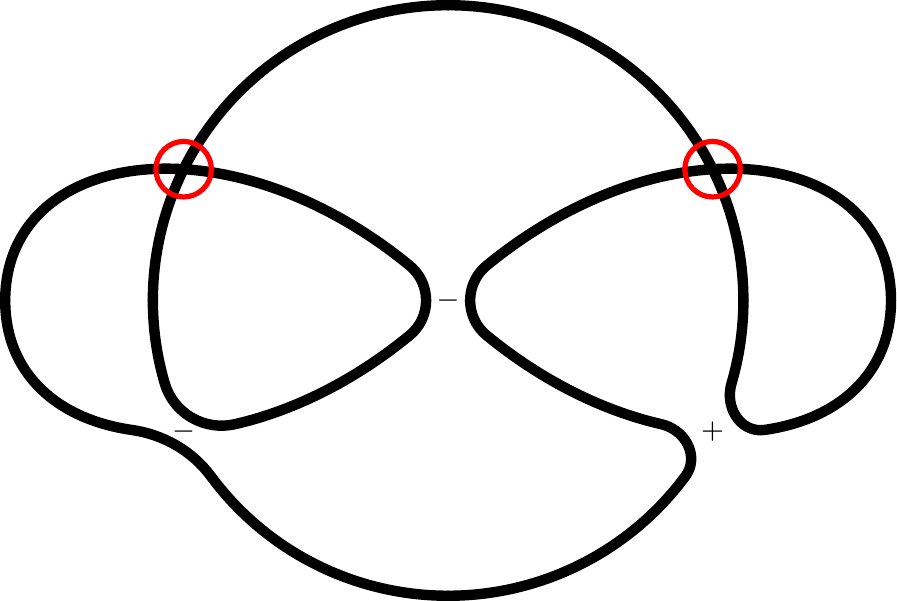}}&$(1,2,1)$&$\emptyset,\emptyset$\\
    \raisebox{-.5\height}{\includegraphics[scale=.2]{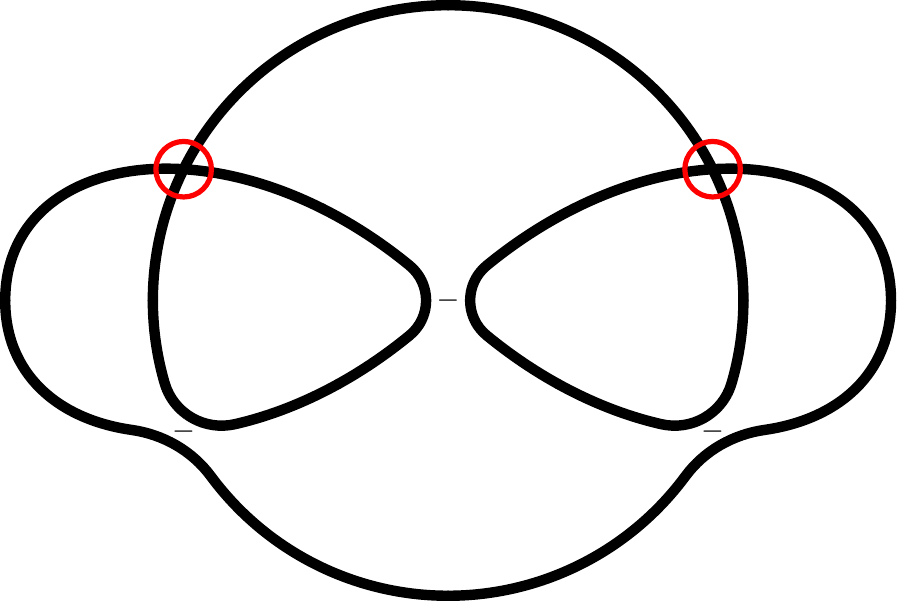}}&$(0,3,1)$&$\emptyset,\emptyset$
\end{longtable}
\newpage
\renewcommand{\bibfont}{\small}

\bibliographystyle{fababbrvnat}
\bibliography{biblio-articles,biblio-books}

\begin{thebibliography}{18}
\providecommand{\natexlab}[1]{#1}
\providecommand{\url}[1]{\texttt{#1}}
\expandafter\ifx\csname urlstyle\endcsname\relax
  \providecommand{\doi}[1]{doi: #1}\else
  \providecommand{\doi}{doi: \begingroup \urlstyle{rm}\Url}\fi

\bibitem[Bollob\'as and Riordan(2002)]{Bollobas2002aa}
B.~Bollob\'as and O.~Riordan.
\newblock ``A polynomial of graphs on surfaces''.
\newblock \emph{Math.~Ann.},  {\bf 323}:\penalty0 81--96, January 2002.

\bibitem[Champanerkar et~al.(2007)Champanerkar, Kofman, and
  Stoltzfus]{Champanerkar2007aa}
A.~Champanerkar, I.~Kofman, and N.~Stoltzfus.
\newblock ``{Quasi-tree expansion for the Bollob\'as-Riordan-Tutte
  polynomial}''.
\newblock May 2007.
\newblock \href {http://arxiv.org/abs/0705.3458} {\path{arXiv:0705.3458}}.

\bibitem[Chmutov(2009)]{Chmutov2007aa}
S.~Chmutov.
\newblock ``{Generalized duality for graphs on surfaces and the signed
  {B}ollob\'as-{R}iordan polynomial}''.
\newblock \emph{J.~Combinatorial~Theory,~Ser.~B},  {\bf 99}~(3):\penalty0
  617--638, 2009.
\newblock \href {http://arxiv.org/abs/0711.3490} {\path{arXiv:0711.3490}},
  \href {http://dx.doi.org/10.1016/j.jctb.2008.09.007}
  {\path{doi:10.1016/j.jctb.2008.09.007}}.

\bibitem[Chmutov and Pak(2007)]{Chmutov2007ab}
S.~Chmutov and I.~Pak.
\newblock ``{The Kauffman bracket of virtual links and the Bollob\'as-Riordan
  polynomial}''.
\newblock \emph{Moscow~Math.~J.},  {\bf 7}~(3):\penalty0 409--418, 2007.

\bibitem[Chmutov and Voltz(2008)]{Chmutov2008aa}
S.~Chmutov and J.~Voltz.
\newblock ``Thistlethwaite's theorem for virtual links''.
\newblock \emph{J.~Knot~Th.~Ram.},  {\bf 17}~(10):\penalty0 1189--1198, 2008.
\newblock \href {http://arxiv.org/abs/0704.1310} {\path{arXiv:0704.1310}}.

\bibitem[Dasbach et~al.(2006)Dasbach, Futer, Kalfagianni, Lin, and
  Stoltzfus]{Dasbach2006aa}
O.~T. Dasbach, D.~Futer, E.~Kalfagianni, X.-S. Lin, and N.~W. Stoltzfus.
\newblock ``{Alternating sum formulae for the determinant and other link
  invariants}''.
\newblock November 2006.
\newblock \href {http://arxiv.org/abs/math/0611025}
  {\path{arXiv:math/0611025}}.

\bibitem[Dasbach et~al.(2008)Dasbach, Futer, Kalfagianni, Lin, and
  Stoltzfus]{Dasbach2008aa}
O.~T. Dasbach, D.~Futer, E.~Kalfagianni, X.-S. Lin, and N.~W. Stoltzfus.
\newblock ``{The Jones polynomial and graphs on surfaces}''.
\newblock \emph{J.~Combinatorial~Theory,~Ser.~B},  {\bf 98}:\penalty0 384--399,
  2008.
\newblock \href {http://arxiv.org/abs/math.GT/0605571}
  {\path{arXiv:math.GT/0605571}}.

\bibitem[Dewey()]{Dewey2007aa}
E.~Dewey.
\newblock ``{A quasitree expansion of the \BRp}''.
\newblock Unpublished work, available at
  \href{http://www.math.wisc.edu/~dewey/}{http://www.math.wisc.edu/~dewey/}.

\bibitem[Godsil and Royle(2001)]{Godsil2001aa}
C.~Godsil and G.~Royle.
\newblock \emph{{Algebraic Graph Theory}}.
\newblock {Graduate Texts in Mathematics}. Springer, 2001.

\bibitem[Huggett and Moffatt(2011)]{Huggett2007aa}
S.~Huggett and I.~Moffatt.
\newblock ``{Expansions for the Bollob\'as-Riordan polynomial of separable
  ribbon graphs}''.
\newblock \emph{Annals of Combinatorics}, 2011.
\newblock In press.
\newblock \href {http://arxiv.org/abs/0710.4266} {\path{arXiv:0710.4266}}.

\bibitem[Kauffman(1989)]{Kauffman1989aa}
L.~H. Kauffman.
\newblock ``{A Tutte polynomial for signed graphs}''.
\newblock \emph{Discrete~Appl.~Math.},  {\bf 25}:\penalty0 105--127, 1989.

\bibitem[Moffatt(2008)]{Moffatt2008ab}
I.~Moffatt.
\newblock ``{Knot invariants and the \BRp{} of embedded graphs}''.
\newblock \emph{European Journal of Combinatorics},  {\bf 29}:\penalty0
  95--107, 2008.

\bibitem[Moffatt(2010)]{Moffatt2008aa}
I.~Moffatt.
\newblock ``{Partial Duality and Bollob\'as and Riordan's Ribbon Graph
  Polynomial}''.
\newblock \emph{Discrete Mathematics},  {\bf 310}:\penalty0 174--183, 2010.
\newblock \href {http://arxiv.org/abs/0809.3014} {\path{arXiv:0809.3014}}.

\bibitem[Moffatt(2011)]{Moffatt2009aa}
I.~Moffatt.
\newblock ``{A characterization of partially dual graphs}''.
\newblock \emph{Journal of Graph Theory}, 2011.
\newblock Published online.
\newblock \href {http://arxiv.org/abs/0901.1868} {\path{arXiv:0901.1868}},
  \href {http://dx.doi.org/10.1002/jgt.20525} {\path{doi:10.1002/jgt.20525}}.

\bibitem[Thistlethwaite(1987)]{Thistlethwaite1987aa}
M.~Thistlethwaite.
\newblock ``A spanning tree expansion for the jones polynomial''.
\newblock \emph{Topology},  {\bf 26}:\penalty0 297--309, 1987.

\bibitem[Traldi(1989)]{Traldi1989aa}
L.~Traldi.
\newblock ``{A dichromatic polynomial for weighted graphs and link
  polynomials}''.
\newblock \emph{Proc.~AMS},  {\bf 106}:\penalty0 279--286, 1989.

\bibitem[Tutte(1954)]{Tutte1954aa}
W.~T. Tutte.
\newblock ``{A contribution to the theory of chromatic polynomials}''.
\newblock \emph{Canad.~J.~Math.},  {\bf 6}:\penalty0 80--91, 1954.

\bibitem[Vignes-Tourneret(2009)]{Vignestourneret2008aa}
F.~Vignes-Tourneret.
\newblock ``{The multivariate signed \BRp}''.
\newblock \emph{Discrete Mathematics},  {\bf 309}:\penalty0 5968--5981, 2009.
\newblock \href {http://arxiv.org/abs/0811.1584} {\path{arXiv:0811.1584}},
  \href {http://dx.doi.org/10.1016/j.disc.2009.04.026}
  {\path{doi:10.1016/j.disc.2009.04.026}}.

\end{thebibliography}

\end{document}

%%% Local Variables: 
%%% mode: latex
%%% TeX-master: t
%%% End: 